\numberwithin{equation}{section}
\theoremstyle{plain}
\newtheorem{thm}{Theorem}[section]
\newtheorem{lem}[thm]{Lemma}
\newtheorem{prop}[thm]{Proposition}
\theoremstyle{definition}
\newtheorem{defn}[thm]{Definition}
\newtheorem{rem}[thm]{Remark}
\newtheorem{ex}[thm]{Example}
\numberwithin{equation}{section}
\def\D{{\mathcal D}}
\def\I{\mathcal{I}}
\def\Re{\operatorname{Re}}
\def\beq{\begin{eqnarray}}
	\def\eeq{\end{eqnarray}}
\def\beqa{\begin{eqnarray*}}
	\def\eeqa{\end{eqnarray*}}
\def\Span{\operatorname{Span}}
\def\Ran{\operatorname{Ran}}
\def\beqn{\begin{equation}}
	\def\eeqn{\end{equation}}
\def\mg#1{}
\def\Ran{\operatorname{Ran}}
\renewcommand{\epsilon}{\varepsilon}
\renewcommand{\phi}{\varphi}
\renewcommand{\bf}[1]{\textbf{#1}}
\renewcommand{\it}[1]{\textit{#1}}
\renewcommand{\sf}[1]{\textsf{#1}}
\renewcommand{\Re}[1]{\sf{Re}(#1)}
\numberwithin{equation}{section}%Code for numbering equations sectionwise
\setlist[enumerate]{font=\upshape,noitemsep, topsep=0pt} % while enumerating the numbering will be in up-shape. Default is italics. Also reduce item seperation space.
\setlist[itemize]{noitemsep, topsep=0pt}
\begin{document}
	
	\title[Necessary Conditions for $\Gamma_{E(3; 3; 1, 1, 1)}$, $\Gamma_{E(3; 2; 1, 2)}$ and $\mathcal{\bar{P}}$-Isometric Dilation]{Necessary Conditions for $\Gamma_{E(3; 3; 1, 1, 1)}$-Isometric Dilation, $\Gamma_{E(3; 2; 1, 2)}$-Isometric Dilation and $\mathcal{\bar{P}}$-Isometric Dilation}
	\author{Avijit Pal and Bhaskar Paul}
	\address[A. Pal]{Department of Mathematics, IIT Bhilai, 6th Lane Road, Jevra, Chhattisgarh 491002}
\email{A. Pal:avijit@iitbhilai.ac.in}

\address[B. Paul]{Department of Mathematics, IIT Bhilai, 6th Lane Road, Jevra, Chhattisgarh 491002}
\email{B. Paul:bhaskarpaul@iitbhilai.ac.in }

\subjclass[2010]{47A15, 47A20, 47A25, 47A45.}

\keywords{$\Gamma_{E(3; 3; 1, 1, 1)} $-contraction, $\Gamma_{E(3; 2; 1, 2)} $-contraction, Spectral set, Complete spectral set,  Pentablock, Pentablock-unitary, $\Gamma_{E(3; 3; 1, 1, 1)}$-unitary, $\Gamma_{E(3; 2; 1, 2)}$-unitary, $\Gamma_{E(3; 3; 1, 1, 1)}$-isometry, $\Gamma_{E(3; 2; 1, 2)}$-isometry}

	\maketitle
	
	\begin{abstract}
A fundamental theorem of Sz.-Nagy states that a contraction $T$ on a Hilbert space can be dilated to an isometry $V.$ A more multivariable context of recent significance for these concepts involves substituting the unit disk with $\Gamma_{E(3; 3; 1, 1, 1)}, \Gamma_{E(3; 2; 1, 2)},$ and pentablock.	We demonstrate the necessary conditions for the existence of $\Gamma_{E(3; 3; 1, 1, 1)}$-isometric dilation, $\Gamma_{E(3; 2; 1, 2)}$-isometric dilation and pentablock-isometric dilation. We construct a class of $\Gamma_{E(3; 3; 1, 1, 1)}$-contractions and $\Gamma_{E(3; 2; 1, 2)}$-contractions that are always dilate	. We create an example of a $\Gamma_{E(3; 3; 1, 1, 1)}$-contraction that has a $\Gamma_{E(3; 3; 1, 1, 1)}$-isometric dilation such that  $[F_{7-i}^*, F_j] \ne [F_{7-j}^*, F_i] $ for some $i,j$ with $1\leq i ,j\leq 6,$ where $F_i$ and $F_{7-i}, 1\leq i \leq 6$ are the fundamental operators of $\Gamma_{E(3; 3; 1, 1, 1)}$-contraction $\textbf{T}=(T_1, \dots, T_7).$ We also produce an example of a $\Gamma_{E(3; 2; 1, 2)}$-contraction that has a $\Gamma_{E(3; 2; 1, 2)}$-isometric dilation by which $$[G^*_1, G_1] \neq [\tilde{G}^*_2, \tilde{G}_2]~{\rm{ and }}~[2G^*_2, 2G_2] \neq
 [2\tilde{G}^*_1, 2\tilde{G}_1],$$ where $G_1, 2G_2, 2\tilde{G}_1, \tilde{G}_2$ are the fundamental operators of $\textbf{S}$. As a result, the set of sufficient conditions for the existence of a $\Gamma_{E(3; 3; 1, 1, 1)}$-isometric dilation and $\Gamma_{E(3; 2; 1; 2)} $-isometric dilations presented in Theorem \ref{conddilation} and Theorem \ref{condilation1}, respectively, are not generally necessary.  We construct explicit $\Gamma_{E(3; 3; 1, 1, 1)} $-isometric, $\Gamma_{E(3; 2; 1; 2)} $-isometric dilations and $\mathcal{\bar{P}}$-isometric dilation of $\Gamma_{E(3; 3; 1, 1, 1)}$-contraction, $\Gamma_{E(3; 2; 1; 2)}$-contraction and $\mathcal{\bar{P}}$-contraction, respectively.  However, the question of whether a $\Gamma_{E(3; 3; 1, 1, 1)}$-isometric dilation, $\Gamma_{E(3; 2; 1, 2)}$-isometric dilation and $\mathcal{\bar{P}}$-isometric dilation for a  $\Gamma_{E(3; 3; 1, 1, 1)}$-contraction,  $\Gamma_{E(3; 2; 1, 2)}$-contraction, and $\mathcal{\bar{P}}$-contraction, respectively, remains unresolved.

\end{abstract}
	
	\section{Introduction and Motivation}\label{Sec 1}
Let $\mathbb C[z_1,\dots,z_n]$ represent the polynomial ring in $n$ variables over the field of complex numbers. Let $\Omega$ be a compact set in $\mathbb C ^m,$ and let $\mathcal A(\Omega)$ denote the algebra of holomorphic functions on an open set $U$ that contains $\Omega.$ Let  $\mathbf{T}=(T_1,\ldots,T_m)$ represent a commuting $m$-tuple of bounded operators defined on a Hilbert space $\mathcal H$ and $\sigma(\mathbf T)$ denotes the joint spectrum of the operator $\mathbf {T}.$  The mapping $\rho_{\mathbf T}:\mathcal A(\Omega)\rightarrow\mathcal B(\mathcal H)$ is defined as follows: $$1\to I~{\rm{and}}~z_i\to T_i ~{\rm{for}}~1\leq i\leq m.$$ It is evident that  $\rho_{\mathbf T}$ is a homomorphism. A compact set $\Omega\subset \mathbb C^m$ is defined as a spectral set for a  $m$-tuple of commuting bounded operators $\mathbf{T}=(T_1,\ldots,T_m)$ if $\sigma(\mathbf T)\subseteq \Omega$ and the homomorphism $\rho_{\mathbf T}:\mathcal A(\Omega)\rightarrow\mathcal B(\mathcal H)$ is contractive. A significant development for future research in non-self-adjoint operator theory is the Sz.-Nagy dilation theorem \cite{ pisier, paulsen}:  for a contraction $T\in \mathcal B(\mathcal H)$, there exists a larger Hilbert space $\mathcal K$ that contains $\mathcal H$ as a subspace, and a unitary operator $U$ acting on a Hilbert space $\mathcal K \supseteq \mathcal H$ with the property that $\mathcal K$ is the smallest closed reducing subspace for $U$ containing $\mathcal H$ such that $$P_\mathcal H\, U^n_{|\mathcal H}=T^n, ~{\rm{ for ~all}} ~n\in \mathbb N\cup \{0\}.$$ Schaffer constructed this type of unitary dilation for a contraction $T$. The spectral theorem for unitary operators demonstrates the validity of the von Neumann inequality: for any contraction $T\in \mathcal B(\mathcal H)$, 
$$\|p(T)\|\leq \|p\|_{\infty, \bar{\mathbb D}}:=\sup\{|p(z)|: |z|\leq1\} $$ holds for every polynomial $p.$ Let $\Omega$ be a compact subset of $\mathbb C ^m. $ Let $F=\left(\!(f_{ij})\!\right)$ be a matrix-valued polynomial defined on $\Omega.$ We call $\Omega$ a complete spectral set (complete $\Omega$-contraction) for $\mathbf T$ if the inequality $\|F(\mathbf T) \| \leq \|F\|_{\infty, \Omega}$ is satisfied for every $F\in \mathcal O(\Omega)\otimes \mathcal M_{k\times k}(\mathbb C), k\geq 1$. If $\Omega$ is a spectral set for a commuting $m$-tuple of operators $\mathbf{T},$ then it is a complete spectral set for $\mathbf{T},$ and we denote that the domain $\Omega$ has property $P.$
We define a $m$-tuple of commuting bounded operators $\mathbf{T}$ with $\Omega$ as a spectral set to possess a $\partial \Omega$ normal dilation if there exists a Hilbert space $\mathcal K$ that contains $\mathcal H$ as a subspace, along with a commuting $m$-tuple of normal operators $\mathbf{N}=(N_1,\ldots,N_m)$ on $\mathcal K$ with its spectrum contained in  $\partial \Omega$, satisfying the condition  $$P_{\mathcal H}F(\mathbf N)\mid_{\mathcal H}=F(\mathbf T) ~{\rm{for~ all~}} F\in \mathcal O(\Omega). $$ In 1969, Arveson \cite{A,AW} demonstrated that a commuting $m$-tuple of operators $\mathbf{T}$ admits a $\partial \Omega$ normal dilation if and only if $\Omega$ is a spectral set for $\mathbf{T}$ and $\mathbf{T}$ satisfies the property $P.$ In a single variable domain $\Omega \subset \mathbb C $, an annulus possesses the property $P$ \cite{agler}; however, this property does not hold for domains with connectivity $n \geq 2$ \cite{michel}.  In a higher-dimensional domain $\Omega$, the bi-disc possesses property $P$, as shown by Ando \cite{paulsen}. Furthermore, Agler and Young established normal dilation for a pair of commuting operators with the symmetrized bidisc as a spectral set \cite{JAgler, young}. However, the first counterexample in the multivariable context was given by Parrott \cite{paulsen}, which is for $\mathbb D^n$ when $n > 2$. G. Misra \cite{GM,sastry}, V. Paulsen \cite{vern}, and E. Ricard \cite{pisier} demonstrated that no ball in $\mathbb{C}^m$, with respect to some norm $\|\cdot\|_{\Omega}$ for $m \geq 3$, can have property $P$. It is further shown in \cite{cv} that if $B_1$ and $B_2$ are not simultaneously diagonalized through unitary, the set $\Omega_{\mathbf B}:= \{(z_1,z_2) :\|z_1 B_1 + z_2 B_2 \|_{\rm op} < 1\}$ fails to have property $P$, where $\mathbf B=(B_1, B_2)$ in $\mathbb C^2 \otimes \mathcal M_2(\mathbb C)$ with $B_1$ and $B_2$ are linearly independent.

Let $\mathcal M_{n\times n}(\mathbb{C})$ denote the set of all $n\times n$ complex matrices and  $E$ represent a linear subspace of $\mathcal M_{n\times n}(\mathbb{C}).$ The function $\mu_{E}: \mathcal M_{n\times n}(\mathbb{C}) \to [0,\infty)$ is defined as follows:
\begin{equation}\label{mu}
\mu_{E}(A):=\frac{1}{\inf\{\|X\|: \,\ \det(1-AX)=0,\,\, X\in E\}},\;\; A\in \mathcal M_{n\times n}(\mathbb{C})
	\end{equation}
with the understanding that $\mu_{E}(A):=0$ if $1-AX$ is  nonsingular for all $X\in E$ \cite{aj}.  We denote $\|\cdot\|$ as the operator norm. Let  $E(n;s;r_{1},\dots,r_{s})\subset \mathcal M_{n\times n}(\mathbb{C})$ be the vector subspace consisting of  block diagonal matrices, defined as follows:
\begin{equation}\label{ls}
    	E=E(n;s;r_{1},...,r_{s}):=\{\operatorname{diag}[z_{1}I_{r_{1}},....,z_{s}I_{r_{s}}]\in \mathcal M_{n\times n}(\mathbb{C}): z_{1},...,z_{s}\in \mathbb{C}\},
\end{equation}
 where $\sum_{i=1}^{s}r_i=n.$ We revisit the definition of $\Gamma_{E(3; 3; 1, 1, 1)}$, $\Gamma_{E(3; 2; 1, 2)}$ and $\Gamma_{E(2; 2; 1, 1)},$ $\mathcal{\bar{P}}$ \cite{Abouhajar,Young, Bharali,apal1}. The sets $\Gamma_{E{(2;2;1,1)}}$, $\mathcal{\bar{P}},$ $\Gamma_{E(3; 3; 1, 1, 1)}$ and $\Gamma_{E(3; 2; 1, 2)}$ are defined as 
\small{ \begin{equation*}
\begin{aligned}
\Gamma_{E{(2;2;1,1)}}:=\Big \{\textbf{x}=(x_1=a_{11}, x_2=a_{22}, x_3=a_{11}a_{22}-a_{12}a_{21}=\det A)\in \mathbb C^3: A\in \mathcal M_{2\times 2}(\mathbb C)~{\rm{and}}~\|A\|\leq 1\Big \},
\end{aligned}
\end{equation*}	}
 \begin{equation*}
\begin{aligned}
\mathcal {\bar{P}}=\Big \{\textbf{x}=(x_1=a_{21}, x_2=\tr(A), x_3=a_{11}a_{22}-a_{12}a_{21}=\det A)\in \mathbb C^3: A\in \mathcal M_{2\times 2}(\mathbb C)~{\rm{and}}~\|A\|\leq 1\Big \},
\end{aligned}
\end{equation*}	 
 \begin{equation*}
\begin{aligned}
\Gamma_{E{(3;3;1,1,1)}}:=\Big \{\textbf{x}=(x_1=a_{11}, x_2=a_{22}, x_3=a_{11}a_{22}-a_{12}a_{21}, x_4=a_{33}, x_5=a_{11}a_{33}-a_{13}a_{31},&\\ x_6=a_{22}a_{33}-a_{23}a_{32},x_7=\det A)\in \mathbb C^7: A\in \mathcal M_{3\times 3}(\mathbb C)~{\rm{and}}~\mu_{E(3;3;1,1,1)}(A)\leq 1\Big \}
\end{aligned}
\end{equation*}	
$${\rm{and}}$$  
\begin{equation*}
\begin{aligned}
\Gamma_{E(3;2;1,2)}:=\Big\{( x_1=a_{11},x_2=\det \left(\begin{smallmatrix} a_{11} & a_{12}\\
					a_{21} & a_{22}
				\end{smallmatrix}\right)+\det \left(\begin{smallmatrix}
					a_{11} & a_{13}\\
					a_{31} & a_{33}
				\end{smallmatrix}\right),x_3=\operatorname{det}A, y_1=a_{22}+a_{33}, &\\ y_2=\det  \left(\begin{smallmatrix}
					a_{22} & a_{23}\\
					a_{32} & a_{33}\end{smallmatrix})\right)\in \mathbb C^5
:A\in \mathcal M_{3\times 3}(\mathbb C)~{\rm{and}}~\mu_{E(3;2;1,2)}(A)\leq 1\Big\}.
\end{aligned}
\end{equation*}
The domains $\Gamma_{E(3; 2; 1, 2)}$, $\Gamma_{E(2; 2; 1, 1)}$  and $\mathcal {\bar{P}}$ are known  as as $\mu_{1,3}-$\textit{quotient}, tetrablock and pentablock, respectively \cite{Abouhajar, Young, Bharali}. 
\begin{defn}

Let $(A, B, P)$ be a commuting triple of bounded operators on a Hilbert space $\mathcal H$. We define $(A,B,P)$ as a tetrablock contraction if $\Gamma_{E(2;2;1,1)}$ is a spectral set for $(A,B,P)$. 
\end{defn}
The symmetrized bidisc and the tetrablock have drawn recent interest from complex analysts and operator theorists. Young's study on the symmetrized bidisc and the tetrablock, carried out with several co-authors \cite{Abouhajar, JAgler, young, ay, ay1, JAY, JANY}, has approached the topic from an operator-theoretic perspective.  Various authors studied the properties of $\Gamma_n$-isometries, $\Gamma_n$-unitaries, the Wold decomposition, and sufficient conditions for rational dilation of a $\Gamma_n$-contraction \cite{SS, A. Pal}. T. Bhattacharyya investigated the properties of  tetrablock isometries, tetrablock unitaries, the Wold decomposition for tetrablock, and sufficient conditions for rational dilation of a tetrablock-contraction \cite{Bhattacharyya}. H. Sau and J. Ball provided an example of tetrablock-contraction which has tetrablock-isometric dilation but fails to satisfy the sufficient conditions for rational dilation of a tetrablock-contraction which was given in \cite{Bhattacharyya}.  The similar results hold for the case of $\Gamma_n, n\geq3$ \cite{Mandal}. However, whether the tetrablock and $\Gamma_n, n\geq3,$ have the property $P$ remains unresolved.

Let  $$K=\{\textbf{x}=(x_1,\ldots,x_7)\in \Gamma_{E(3;3;1,1,1)} :x_1=\bar{x}_6x_7, x_3=\bar{x}_4x_7,x_5=\bar{x}_2x_7 ~{\rm{and}}~|x_7|=1\}$$ 
$$\rm{and}$$
\[ K_1 = \{x = (x_1, x_2, x_3, y_1, y_2) \in \Gamma_{E(3;2;1,2)} : x_1 = \overline{y}_2 x_3, x_2 = \overline{y}_1 x_3, |x_3| = 1 \}. \]

We begin with the following definitions that will be essential for our discussion.

\begin{defn}\label{def-1}
		\begin{enumerate}
			\item If $\Gamma_{E(3; 3; 1, 1, 1)}$ is a spectral set for $\textbf{T} = (T_1, \dots, T_7)$, then the $7$-tuple of commuting bounded operators $\textbf{T}$ defined on a  Hilbert space $\mathcal{H}$ is referred to as a \textit{$\Gamma_{E(3; 3; 1, 1, 1)}$-contraction}.
			
			\item Let $(S_1, S_2, S_3)$ and $(\tilde{S}_1, \tilde{S}_2)$ be tuples of commuting bounded operators defined on a Hilbert space $\mathcal{H}$ with $S_i\tilde{S}_j = \tilde{S}_jS_i$ for $1 \leqslant i \leqslant 3$ and $1 \leqslant j \leqslant 2$. We say that  $\textbf{S} = (S_1, S_2, S_3, \tilde{S}_1, \tilde{S}_2)$ is a $\Gamma_{E(3; 2; 1, 2)}$-contraction if $ \Gamma_{E(3; 2; 1, 2)}$ is a spectral set for $\textbf{S}$.
			
\item A commuting $7$-tuple of normal operators $\textbf{N} = (N_1, \dots, N_7)$ defined on a Hilbert space $\mathcal{H}$ is  a \textit{$\Gamma_{E(3; 3; 1, 1, 1)}$-unitary} if the Taylor joint spectrum $\sigma(\textbf{N})$ is contained in the set $K$.  		
			\item A commuting $5$-tuple of normal operators $\textbf{M} = (M_1, M_2, M_3, \tilde{M}_1, \tilde{M}_2)$ on a Hilbert space $\mathcal{H}$ is referred as a \textit{$\Gamma_{E(3; 2; 1, 2)}$-unitary} if the Taylor joint spectrum $\sigma(\textbf{M})$ is contained in $K_1.$ 
					
\item A  $\Gamma_{E(3; 3; 1, 1, 1)}$-isometry (respectively, $\Gamma_{E(3; 2; 1, 2)}$-isometry) is defined as the restriction of a $\Gamma_{E(3; 3; 1, 1, 1)}$-unitary (respectively, $\Gamma_{E(3; 2; 1, 2)}$-unitary)  to a joint invariant subspace. In other words, a $\Gamma_{E(3; 3; 1, 1, 1)}$-isometry ( respectively, $\Gamma_{E(3; 2; 1, 2)}$-isometry) is a $7$-tuple (respectively, $5$-tuple) of commuting bounded operators that possesses simultaneous extension to a \textit{$\Gamma_{E(3; 3; 1, 1, 1)}$-unitary} (respectively, \textit{$\Gamma_{E(3; 2; 1, 2)}$-unitary}). It is important to observe that a $\Gamma_{E(3; 3; 1, 1, 1)}$-isometry (respectively, $\Gamma_{E(3; 2; 1, 2)}$-isometry ) $\textbf{V}=(V_1\dots,V_7)$ (respectively,  $\textbf{W}=(W_1,W_2,W_3,\tilde{W}_1,\tilde{W}_2)$) consists of commuting subnormal operators with $V_7$ (respectively, $W_3$)  is an isometry.

\item   We say that $\textbf{V}$ (respectively, $\textbf{W}$)   is a pure $\Gamma_{E(3; 3; 1, 1, 1)}$-isometry (respectively, pure $\Gamma_{E(3; 2; 1, 2)}$-isometry) if $V_7$ (respectively, $W_3$) is a  pure isometry,  that is, a shift of some multiplicity.		
\end{enumerate}
	\end{defn}	
	
Let
	\begin{equation}\label{K_0}
		\begin{aligned}
			K_0 &=
			\left\{(x_1, x_2, x_3) \in \mathbb{C}^3 : |x_2| \leqslant 2, |x_3| = 1, x_2 = \overline{x}_2x_3 \,\, \text{and} \,\, |x_1| = \sqrt{1 - \frac{1}{4}|x_2|^2} \right\}.
		\end{aligned}
	\end{equation}
The following theorem characterizes the distinguished boundary of the pentablock \cite{Young}.
\begin{thm}[Theorem 8.4, \cite{Young}]\label{distinguished boundary pentablock}
		For $x \in \mathbb{C}^3$ the following are equivalent:
		\begin{enumerate}
			\item $x \in K_0$,
			
			\item $x$ is a peak point of $\overline{\mathcal{P}}$,
			
			\item $x \in b\mathcal{\bar{P}}$, the distinguished boundary of $\mathcal {P}$.
		\end{enumerate}
	\end{thm}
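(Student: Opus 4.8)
The plan is to prove the two inclusions $K_0\subseteq P\subseteq b\mathcal{\bar{P}}\subseteq K_0$, where $P$ is the set of peak points of $\overline{\mathcal{P}}$; the middle inclusion $P\subseteq b\mathcal{\bar{P}}$ is the standard function-algebra fact that a peak point of a uniform algebra lies in its Shilov (distinguished) boundary, so it supplies $(2)\Rightarrow(3)$ for free, and only $K_0\subseteq P$ (giving $(1)\Rightarrow(2)$) and $b\mathcal{\bar{P}}\subseteq K_0$ (giving $(3)\Rightarrow(1)$) carry content. Two preliminary facts about the parametrisation $x=(a_{21},\operatorname{tr}A,\det A)$, $\|A\|\le1$, drive everything. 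First, if $|x_3|=|\det A|=1$, then the singular values of $A$ satisfy $\sigma_1\sigma_2=1$ and $\sigma_1\le1$, so $\sigma_1=\sigma_2=1$ and $A$ is unitary; writing its eigenvalues as $e^{i\mu},e^{i\nu}$ and diagonalising $A=VDV^{*}$ gives $|a_{21}|=|\sin t\cos t|\,|e^{i\mu}-e^{i\nu}|$, whose supremum over unitaries $V$ is $\tfrac12|e^{i\mu}-e^{i\nu}|=\sqrt{1-\tfrac14|x_2|^2}$. Since the distinguished boundary $bG$ of the symmetrised bidisc $G=\{(\operatorname{tr}A,\det A):\|A\|\le1\}$ is exactly $\{(s,p):|p|=1,\ s=\bar s p,\ |s|\le2\}$, this identifies $K_0$ as the set of triples coming from unitaries with maximal off-diagonal entry. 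Second, conjugating $A$ by $\operatorname{diag}(e^{i\theta/2},e^{-i\theta/2})$ multiplies $a_{21}$ by an arbitrary unimodular phase while fixing $\operatorname{tr}A$ and $\det A$, so over each $(s,p)\in\overline G$ the fibre of $\pi(x)=(x_2,x_3)$ is a round disc $\{|x_1|\le R(s,p)\}$; thus $\overline{\mathcal{P}}$ is a Hartogs-type disc bundle over $\overline G$ with roof radius $R=\sqrt{1-\tfrac14|s|^2}$ on $bG$.

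For $K_0\subseteq P$ I would produce an explicit peak function at a single point of $K_0$ and transport it by automorphisms. Over the base one uses the Agler--Young functions $\Phi_\omega(s,p)=(2\omega p-s)/(2-\omega s)$, $\omega\in\mathbb{T}$, which have modulus at most $1$ on $\overline G$ and detect $bG$, and to control the fibre coordinate one adjoins a holomorphic function whose modulus is bounded by $R$ and equals $R$ only on the top circle of each fibre; combining these and post-composing with a M\"obius map of $\mathbb{D}$ yields an $f$ peaking at the chosen point. Because the automorphism group of $\mathcal{P}$, generated by the M\"obius action on the underlying contraction $A$ together with the fibre rotations above, acts transitively on $K_0$ away from the lower-dimensional stratum $\{|x_2|=2,\ x_1=0\}$, this single peak function spreads over all of $K_0$, and the degenerate stratum is handled separately as a limit.

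The substantive half is $b\mathcal{\bar{P}}\subseteq K_0$, which I would obtain by showing that $K_0$ is a closed boundary for $\mathcal{A}(\overline{\mathcal{P}})$; since the distinguished boundary is the minimal closed boundary, the inclusion follows. Let $f$ attain $\max_{\overline{\mathcal{P}}}|f|$ at $x^{*}=(a^{*},s^{*},p^{*})$. If $|a^{*}|<R(s^{*},p^{*})$, then $x^{*}$ is interior to the round analytic disc $\zeta\mapsto(R(s^{*},p^{*})\zeta,\,s^{*},p^{*})\subseteq\overline{\mathcal{P}}$, so the maximum principle lets us move the maximum to a point with $|a^{*}|=R(s^{*},p^{*})$; we may thus assume $x^{*}$ lies on the roof. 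Next choose a complex geodesic disc $d\colon\overline{\mathbb{D}}\to\overline G$ with $d(\zeta_0)=(s^{*},p^{*})$ for some $\zeta_0\in\mathbb{D}$ and $d(\mathbb{T})\subseteq bG$, write $s(\zeta)$ for its trace coordinate, and let $O$ be the outer function on $\mathbb{D}$ with boundary modulus $|O|=\sqrt{1-\tfrac14|s(\zeta)|^2}$ on $\mathbb{T}$. Then the disc $D(\zeta)=(B(\zeta)O(\zeta),\,d(\zeta))$, with $B$ a unimodular constant or Blaschke factor arranged so that $D(\zeta_0)=x^{*}$, maps $\overline{\mathbb{D}}$ into $\overline{\mathcal{P}}$ with $D(\mathbb{T})\subseteq K_0$, and applying the maximum principle to $f\circ D$ places $\max|f|$ on $K_0$.

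The main obstacle is this lifting step, specifically the demand that $D$ both remain inside $\overline{\mathcal{P}}$ and pass through the given roof point $x^{*}$. Two points must be controlled: that $\log(1-\tfrac14|s(\zeta)|^2)$ is integrable on $\mathbb{T}$, which fails on the degenerate set $|s|=2$ and must be treated by approximation; and, more seriously, that the interior value $|O(\zeta_0)|$ of the outer function equals the interior fibre radius $R(s^{*},p^{*})=|a^{*}|$, so that $D$ actually reaches the roof at $x^{*}$ while keeping $|x_1|\le R$ throughout the disc. This last equality is not formal: it holds precisely because $R$ is governed by the realisation/extremal characterisation of the pentablock, so that along a complex geodesic of $G$ the optimal outer function saturates the fibre radius. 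Verifying this matching --- equivalently, that every roof point over $G$ is the image of an analytic disc of $\overline{\mathcal{P}}$ with boundary in $K_0$ --- is the crux, and it is where the full defining inequality of $\overline{\mathcal{P}}$, rather than merely its boundary values and the projection to $\overline G$, is indispensable.
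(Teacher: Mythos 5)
First, a point of comparison: the paper you were given contains no proof of this statement at all --- it is imported verbatim from Agler--Lykova--Young \cite{Young} (their Theorem 8.4), so your attempt can only be measured against what a complete proof requires, not against an internal argument of this paper. Your skeleton is the right one (peak points lie in the Shilov boundary for free; show $K_0$ is a closed boundary; show every point of $K_0$ peaks), and your preliminary facts are correct: $|x_3|=1$ forces $A$ unitary, the roof radius over $bG$ is $\sqrt{1-\tfrac14|x_2|^2}$, and the fibres of $\overline{\mathcal{P}}\to\overline{G}$ are round discs preserved by the rotation automorphisms.

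The genuine gap is the one you flag yourself and then leave unproved: that the outer function $O$ with boundary modulus $\sqrt{1-\tfrac14|s(\zeta)|^2}$ satisfies $|O|\le R\circ d$ on $\mathbb{D}$ with equality at $\zeta_0$. This is not a technical loose end; it is the entire content of $(3)\Rightarrow(1)$, and as written your Step C assumes exactly what must be shown. The claim is true, but only because of the explicit fibre-radius formula established in \cite{Young}: along a flat geodesic $F_\beta=\{s=\beta+\bar\beta p\}$ one has $R(s,p)=\bigl|1-\tfrac{s\bar\beta}{2(1+\sqrt{1-|\beta|^2})}\bigr|$, the modulus of an affine function of $p$ whose only zero has modulus $\tfrac{1+\sqrt{1-|\beta|^2}}{1-\sqrt{1-|\beta|^2}}>1$; hence $R\circ d$ is the modulus of an outer function, $\log(R\circ d)$ is harmonic, and it coincides with $\log|O|$. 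Proving that formula is a substantial extremal computation (maximize $|a_{21}|$ over contractions with prescribed trace and determinant), so your proof is incomplete without it. Alternatively, the whole lifting machinery can be bypassed: pull $f$ back to the closed unit ball of $M_2(\mathbb{C})$ via $\pi$; since any contraction $W\operatorname{diag}(\sigma_1,\sigma_2)V^*$ sits inside the analytic disc $\zeta\mapsto W\operatorname{diag}\bigl(\tfrac{\sigma_1+\zeta}{1+\sigma_1\zeta},\tfrac{\sigma_2+\zeta}{1+\sigma_2\zeta}\bigr)V^*$ with unitary boundary values, $\max|f\circ\pi|$ is attained on $U(2)$; then $\pi(U(2))$ is precisely the union of the solid fibres over $bG$, and your fibre-disc maximum principle finishes. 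This route needs no geodesics, no outer functions, and no interior fibre-radius formula.

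Two further problems in the peak-point half. Handling the stratum $\{|x_2|=2,\ x_1=0\}$ ``as a limit'' is invalid: peak points of a uniform algebra are not closed under limits, so peaking at nearby points of $K_0$ gives nothing at the limit point. The correct fix is elementary: a contraction with both eigenvalues equal to a unimodular $\lambda$ is $\lambda I$, so the fibre over $(2\lambda,\lambda^2)$ is the singleton $(0,2\lambda,\lambda^2)$, and the Agler--Young peak function $\tfrac14(1+\bar\lambda x_2+\bar\lambda^2x_3)$ of $\overline{G}$, pulled back through the projection, peaks exactly there. Also, your transport argument silently uses that M\"obius maps of the base lift to automorphisms of $\overline{\mathcal{P}}$ acting nontrivially on $x_1$; this is true but is itself a theorem of \cite{Young} requiring proof. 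Granting it, a clean seed is available: at $(1,0,1)\in K_0$ the polynomial $\tfrac14(1+x_1)(1+x_3)$ peaks, because $x_1=1$ on $\overline{\mathcal{P}}$ forces $A=\bigl(\begin{smallmatrix}0&a_{12}\\1&0\end{smallmatrix}\bigr)$ and then $x_3=1$ forces $a_{12}=-1$, so the peak set is the single point $(1,0,1)$.
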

We recall the definition of pentablock contraction, pentalblock unitary, and pentalblock isometry from \cite{Jindal}.
\begin{defn}\label{Defn 2}
		Let $\textbf{P}=(P_1, P_2, P_3)$ be a commuting triple of bounded operators on a Hilbert space $\mathcal{H}$. We call it
		\begin{enumerate}
			\item If $\overline{\mathcal{P}}$ is a spectral set for $\textbf{P}=(P_1, P_2, P_3)$, then a commuting triple of bounded operators $\textbf{P}$ on a Hilbert space $\mathcal{H}$ is said to be a pentablock contraction. 			
			\item A commuting triple of normal operators $\textbf{P}=(P_1, P_2, P_3)$ on a Hilbert space $\mathcal{H}$ is called a pentablock unitary ($\mathcal{\bar{P}}$-unitary) if  the Taylor joint spectrum $\sigma(\textbf{P})$ is contained in $b\mathcal{P}$.			\item A pentablock isometry ($\mathcal{\bar{P}}$-isometry) is defined as the restriction of a pentablock unitary to a joint invariant subspace. 
\item We define a pentablock isometry as pure if $P_3$ is a pure isometry, that is, a shift of some multiplicity.			

		\end{enumerate}
	\end{defn}
	
Let  $\mathbb T$ be the unit circle. We shall use some spaces of vector-valued and operator-valued functions. Let $\mathcal E $ be a separable Hilbert space. Let $\mathcal B(\mathcal E)$ be the space of all bounded operators on $\mathcal E$ with respect to the operator norm. Let $H^2(\mathcal E)$ denote the standard Hardy space of analytic $\mathcal E$-valued functions defined on the unit disk  $\mathbb D,$ whereas $ L^2(\mathcal E)$ represents the Hilbert space of square-integrable $\mathcal E$-valued functions on the unit circle $\mathbb T,$ equipped with their natural inner products. The space $H^{\infty}(\mathcal B(\mathcal E))$ consists of bounded analytic $\mathcal B(\mathcal E)$-valued functions defined on $\mathbb D$, while  $L^{\infty}(\mathcal B(\mathcal E))$ represents the space of bounded measurable functions with values in $\mathcal B(\mathcal E)$ defined on $\mathbb T$. Both spaces have the appropriate version of the supremum norm. For $\phi \in L^{\infty}(\mathcal B(\mathcal E)),$ the Toeplitz operator corresponding to  the symbol  $\phi$ is denoted by $T_{\phi}$ and is defined as follows: 
$$T_{\phi}f=P_{+}(\phi f), f \in H^2(\mathcal E),$$ where $P_{+} : L^2(\mathcal E) \to H^2(\mathcal E)$ is the orthogonal projecton.  Specifically, $M_z$ represents the unilateral shift operator on $H^2(\mathcal E)$ (we denote the identity function on $\mathbb T$ by $z$) and $M_{\bar{z}}$ denotes the backward shift operator on $H^2(\mathcal E).$

In section $2$, we prove the necessary conditions for the existence of $\Gamma_{E(3; 3; 1, 1, 1)}$-isometric dilation and $\Gamma_{E(3; 2; 1, 2)}$-isometric dilation. We construct a class of $\Gamma_{E(3; 3; 1, 1, 1)}$-contractions that are always dilate, specifically those of the form $\textbf{T} = (T_1, T_2, T_1T_2, T_1T_2, T_1^2T_2, T_1T_2^2, T_1^2T_2^2),$ where $(T_1, T_2)$ denotes a pair of contractions. Furthermore, we discuss a class of $\Gamma_{E(3; 2; 1, 2)}$-contractions that always dilate, particularly those of the form $\textbf{S} = (S_1, S_1S_2+S_1^2S_2, S_1^2S_2^2, S_2+S_1S_2, S_1S_2^2),$ where $(S_1, S_2)$ is a pair of contractions. We establish a necessary condition for the existence of a $\mathcal{\bar{P}}$-isometric dilation in section $3$. In section $4$, we produce an example of a $\Gamma_{E(3; 3; 1, 1, 1)}$-contraction that has a $\Gamma_{E(3; 3; 1, 1, 1)}$-isometric dilation such that  $[F_{7-i}^*, F_j] \ne [F_{7-j}^*, F_i] $ for some $i,j$ with $1\leq i ,j\leq 6.$ In conclusion, we assert that the set of sufficient conditions for the existence of a $\Gamma_{E(3; 3; 1, 1, 1)}$-isometric dilation presented in Theorem \ref{conddilation} are generally not necessary, even when the $\Gamma_{E(3; 3; 1, 1, 1)}$-contraction $\textbf{T}=(T_1,\dots, T_7)$ has a special form, where $T_7$ is a partial isometry on $\mathcal H.$  Furthermore, we also provide an example of a $\Gamma_{E(3; 2; 1, 2)}$-contraction that has a $\Gamma_{E(3; 2; 1, 2)}$-isometric dilation by which one of the conditions outlined in the Proposition \ref{Prop Partial 1} is not satisfied. In summary, we conclude that the set of sufficient conditions for the existence of a $\Gamma_{E(3; 2; 1, 2)}$-isometric dilation described in Theorem \ref{condilation1} are not generally necessary, even when the $\Gamma_{E(3; 2; 1, 2)}$-contraction $\textbf{S} = (S_1, S_2, S_3, \tilde{S}_1, \tilde{S}_2)$ has a special form, particularly where $S_3$ is a partial isometry on $\mathcal H.$ In section $5$, we construct explicit $\Gamma_{E(3; 3; 1, 1, 1)} $-isometric and $\Gamma_{E(3; 2; 1; 2)} $-isometric dilations of $\Gamma_{E(3; 3; 1, 1, 1)}$-contraction and $\Gamma_{E(3; 2; 1; 2)}$-contraction, respectively. We construct a family of $\mathcal{\bar{P}}$-contractions that have $\mathcal{\bar{P}}$-isometric dilation in  section $6.$ However, the question of whether a $\Gamma_{E(3; 3; 1, 1, 1)}$-isometric dilation, $\Gamma_{E(3; 2; 1, 2)}$-isometric dilation and $\mathcal{\bar{P}}$-isometric dilation for a  $\Gamma_{E(3; 3; 1, 1, 1)}$-contraction,  $\Gamma_{E(3; 2; 1, 2)}$-contraction, and $\mathcal{\bar{P}}$-contraction, respectively, remains unresolved.  

	\section{$\Gamma_{E(3; 3; 1, 1, 1)}$-Isometric Dilation and $\Gamma_{E(3; 2; 1, 2)}$-Isometric Dilation : Necessary and Sufficient Conditions}\label{Sec 2}
We revisit the definitions for the terms spectrum, spectral radius, and numerical radius of an operator. Let $\sigma(T)$ denote the spectrum of $T$, defined as $$\sigma(T)=\{\lambda \in \mathbb{C} \mid T-\lambda I~{\rm{ is~not~invertible}}\}. $$ Additionally, the numerical radius of a bounded operator \( T \) on a Hilbert space \( \mathcal{H} \) is represented as $$\omega(T)=\sup\{|\langle Tx,x\rangle|:\| x \|=1\}. $$ A direct computation demonstrates that $r(T)\leq \omega(T)\leq \|T\|$ for a bounded operator $T$, where the spectral radius is defined as $$r(T)= \sup_{ \lambda \in \sigma(T)}|\lambda|. $$	Let \( T \) be a contraction on a Hilbert space \( \mathcal{H} \). The defect operator associated with \( T \) is defined as \( D_{T} = (I - T^*T)^{\frac{1}{2}} \). The closure of the range of \( D_{T} \) is denoted by \( \mathcal{D}_{T} \). Halmos initially observed that if $U=\left(\begin{smallmatrix} T & D_{T^*} \\ D_T & -T^*\end{smallmatrix}
\right),$ then $T=P_{\mathcal H}U_{|_{\mathcal H}}. $ An operator satisfying the criterion above can be referred to as a $1$-dilation. Let $\mathcal K=\underbrace{\mathcal H\oplus\cdots \oplus\mathcal H}_\text{$N+1$ times}$ and consider the operator matrix of size $(N+1)\times(N+1)$ defined as
\small{\begin{equation}\label{U}
U=\left(\begin{smallmatrix}
T & 0 & 0 & \cdots & 0 & D_{T^*}\\D_{T} & 0 &0 & \cdots & 0 &-T^*\\0 & I &0 & \cdots & 0 & 0\\0 & 0 &I & \cdots & 0 &0\\\vdots & \vdots & \vdots & \cdots & \vdots & \vdots\\ 0 & 0 &0 & \cdots & I & 0
\end{smallmatrix}\right)
\end{equation}}
\newline ${\rm{Egerv\grave{a}ry}}$ proved that $U$ is a unitary operator on $\mathcal K$ and satisfies the following conditions: \begin{equation}\label{Uk}U^k=\left(\begin{smallmatrix} T^k & 0\\ 0 & *\end{smallmatrix}
\right),~k=1,\cdots, N.
\end{equation} By identifying $\mathcal H$ with the first summand of $\mathcal K,$ for every polynomial $p$ of degree at most $N,$ it follows that $p(T)=P_{\mathcal H}P(U)_{|_{\mathcal H}}.$ A dilation of this type is referred to as $N$-dilation.
An operator $U\in \mathcal B(\mathcal K)$ is called a power dilation of $T\in \mathcal B(\mathcal H)$ if $\mathcal H$ is a subspace of $\mathcal K$ and if for all $k=0,1,2,\dots, T^k=P_{\mathcal H}U^k_{|_{\mathcal H}}$.
\begin{thm}[Sz.-Nagy's isometric dilation, \cite{shalit}] Let $T$ be a contraction acting on a Hilbert space $\mathcal H.$ Then there exists a Hilbert space $\mathcal K$ that contains $\mathcal H$ as a subspace and an isometry $V$ on $\mathcal K$ such that
$$T^*=V^*_{|_{\mathcal H}}$$ and, in particular, $V$ serves as the power dilation of $T.$ Moreover, $\mathcal K$ can be chosen as minimal, indicating that the minimal invariant subspace for $V$ that includes $\mathcal H$ is $\mathcal K$.
\end{thm}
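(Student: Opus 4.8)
The plan is to produce $V$ by the Sch\"affer construction, enlarging $\mathcal H$ by countably many copies of the defect space so that the defect of $T$ is ``unfolded'' into the forward motion of an isometry; this is the infinite analogue of the finite dilations in \eqref{U} recorded above. First I would set $D_T=(I-T^*T)^{1/2}$, let $\mathcal D_T$ be the closure of the range of $D_T$, and take $\mathcal K=\mathcal H\oplus\bigoplus_{n=1}^{\infty}\mathcal D_T$, identifying $\mathcal H$ with the first summand. I then define $V\in\mathcal B(\mathcal K)$ by
\[
V(h,d_1,d_2,\dots)=(Th,\,D_Th,\,d_1,\,d_2,\,\dots),
\]
that is, by the semi-infinite block matrix obtained from \eqref{U} by replacing the lower-right block with the forward shift on $\bigoplus_{n\ge 1}\mathcal D_T$.

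The single computation that drives everything is the defect identity: for $h\in\mathcal H$,
\[
\|Th\|^2+\|D_Th\|^2=\|Th\|^2+\langle (I-T^*T)h,h\rangle=\|h\|^2,
\]
and since the remaining coordinates are only shifted, this yields $\|V\xi\|=\|\xi\|$ for all $\xi\in\mathcal K$, so $V$ is an isometry. Next I would compute the adjoint,
\[
V^*(h,d_1,d_2,\dots)=(T^*h+D_Td_1,\,d_2,\,d_3,\,\dots),
\]
and restrict to $\mathcal H$ (where $d_1=d_2=\dots=0$) to read off $V^*|_{\mathcal H}=T^*$, which is the asserted identity. In particular $V^*\mathcal H\subseteq\mathcal H$, i.e. $\mathcal H$ is co-invariant for $V$.

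From co-invariance the power-dilation property is immediate: since $V^*\mathcal H\subseteq\mathcal H$ one has $(V^*)^k|_{\mathcal H}=(V^*|_{\mathcal H})^k=(T^*)^k$, and pairing $V^kh$ against $h'\in\mathcal H$ gives $\langle P_{\mathcal H}V^kh,h'\rangle=\langle h,(T^*)^kh'\rangle=\langle T^kh,h'\rangle$, so that $P_{\mathcal H}V^k|_{\mathcal H}=T^k$ for every $k\ge 0$. For minimality I would show $\bigvee_{n\ge 0}V^n\mathcal H=\mathcal K$ by induction: the span $\mathcal H\vee V\mathcal H$ already contains every vector $(0,D_Th,0,\dots)$, hence after closure the entire first copy of $\mathcal D_T$, and applying $V$ repeatedly transports these vectors into the successive $\mathcal D_T$ summands, exhausting $\mathcal K$. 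The use of $\mathcal D_T=\overline{\operatorname{Ran}}\,D_T$ in place of all of $\mathcal H$ is precisely what makes the resulting dilation minimal rather than merely a power dilation.

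Because the construction is essentially forced by the defect identity, there is no deep obstacle here; the only genuinely delicate point is the minimality induction, namely verifying that the cyclic subspace generated by $\mathcal H$ reaches every defect coordinate and that no portion of $\mathcal K$ is redundant. If one preferred, one could instead build a non-minimal isometric dilation on $\mathcal H\oplus\bigoplus_{n\ge1}\mathcal H$ and then pass to the minimal invariant subspace $\bigvee_{n\ge0}V^n\mathcal H$, but the defect-space version avoids this cutting-down step altogether.
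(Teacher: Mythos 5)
Your proposal is correct in every step: the defect identity gives that $V$ is isometric, the adjoint computation gives $V^*|_{\mathcal H}=T^*$ (hence co-invariance and, by the pairing argument, $P_{\mathcal H}V^k|_{\mathcal H}=T^k$ for all $k\ge 0$), and the induction showing $\bigvee_{n\ge 0}V^n\mathcal H=\mathcal K$ is exactly what the minimality assertion requires, since the use of $\mathcal D_T=\overline{\operatorname{Ran}}\,D_T$ rather than full copies of $\mathcal H$ leaves no redundant summand. Note, however, that the paper does not prove this theorem at all: it is quoted as a classical result from the cited survey of Shalit, immediately after the discussion of the Halmos $1$-dilation and the Egerv\'ary $N$-dilation matrix \eqref{U}, so there is no in-paper argument to compare against. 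Your Sch\"affer-type construction is the standard proof of this statement and is precisely the infinite analogue of \eqref{U} that the paper's narrative gestures toward (replace the cyclic lower-right block by the forward shift on $\bigoplus_{n\ge1}\mathcal D_T$); it also makes explicit the paper's remark that the minimal isometric dilation is a co-extension, since your $V^*$ literally extends $T^*$.
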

The minimal isometric dilation is indeed a co-extension, and it has been demonstrated that co-extension is always a power dilation. However, the converse is not true. We now define the $\Gamma_{E(3; 3; 1, 1, 1)}$-isometric dilation of the $\Gamma_{E(3; 3; 1, 1, 1)}$-contraction and the $\Gamma_{E(3; 2; 1, 2)}$-isometric dilation of the $\Gamma_{E(3; 2; 1, 2)}$-contraction.

\begin{defn}\label{isometric dilation2}
A commuting $7$-tuple of operators $(V_1,\ldots,V_{7})$ acting on a Hilbert space $\mathcal K \supseteq \mathcal H$ is referred to as a $\Gamma_{E(3; 3; 1, 1, 1)}$ -isometric dilation of a $\Gamma_{E(3; 3; 1, 1, 1)}$-contraction $(T_1,\ldots,T_7)$ acting on a Hilbert space $\mathcal H$ possesses the following properties:
\begin{itemize}
\item  $(V_1,\ldots,V_{7})$ is $\Gamma_{E(3; 3; 1, 1, 1)}$-isometry;
\item $V_i^{*}|_{\mathcal H}=T_i^{*}$ for all $1\leq i \leq 7.$
\end{itemize}

\end{defn}	
It follows from the above definition that $(V_1,\ldots,V_{7}) $ is a $\Gamma_{E(3; 3; 1, 1, 1)} $-isometric dilation of a $\Gamma_{E(3; 3; 1, 1, 1)} $-contraction $(T_1,\ldots,T_7) $ is equivalent to stating that $(V_1^*,\ldots,V_{7}^*)$  is a $\Gamma_{E(3; 3; 1, 1, 1)} $-co-isometric extension of $(T_1^*,\ldots,T_7^*)$. Moreover, we call the dilation as minimal if $$\mathcal K_{0}=\overline{{\rm{span}}}\{ V_{7}^{n}h:h\in\mathcal H ~{\rm{and}}~n \in \mathbb N\cup\{0\}\}.$$

The operator functions $\rho_{G_{E(2; 1; 2)}}$ and $\rho_{G_{E(2; 2; 1,1)}}$ for the symmetrized bidisc and tetrablock are defined as follows:
$$\rho_{G_{E(2; 1; 2)}} (S,P)=2 (I-P^*P)-(S-S^*P)-(S^*-P^*S) $$ and 
$$\rho_{G_{E(2; 2; 1,1)}} (T_1,T_2,T_3)=(I-T_3^*T_3) -(T_2^*T_2-T_1^*T_1) -2\Re {T_2-T_1^*T_3}, $$ where $P,T_3$ are contractions and $S,P$ and $T_1,T_2,T_3$ are commuting bounded operators defined on Hilbert spaces $\mathcal H_1$ and $\mathcal H_2$, respectively. We review the definition of tetrablock contraction as stated in \cite{Bhattacharyya}.
\begin{defn}\label{fundamental}
		Let $(T_1, \dots, T_7)$ be a $7$-tuple of commuting contractions on a Hilbert space $\mathcal{H}. $ The equations 

		\begin{equation}\label{Fundamental 1}
\begin{aligned}
&T_i - T^*_{7-i} T_7 = D_{T_7}F_iD_{T_7}, 1\leq i \leq 6,
\end{aligned}
\end{equation}
where $F_i\in \mathcal{B}(\mathcal{D}_{T_7}),$ are referred to as the  fundamental equations for $(T_1, \dots, T_7)$.
			\end{defn}
For any $z\in \mathbb C$, we  introduce the operators $S^{(i)}_z = T_i + zT_{7-i}$ for $1\leq i \leq 6$ and $P_z = zT_7$.
\begin{thm}[ Theorem $2.4$, \cite{apal3}]\label{fundam}
		Let $\textbf{T} = (T_1, \dots, T_7)$ be a commuting $7$-tuple of bounded operators acting on a Hilbert space $\mathcal{H}$. Then in the following $(1) \Rightarrow (2) \Rightarrow (3) \Rightarrow (4) \Rightarrow (5):$
		\begin{enumerate}
			\item $\mathbf{T} = (T_1, \dots, T_7)$ be a $\Gamma_{E(3; 3; 1, 1, 1)}$-contraction.
			
			\item $(T_i, T_{7-i}, T_7)$ is a $\Gamma_{E(2; 2; 1, 1)}$-contraction for $1 \leq i \leq 6$.
			
			\item For $1\leq i \leq  6$ and $z \in \mathbb{T}$,
			\begin{equation*}
				\begin{aligned}
					&\rho_{G_{E(2; 2; 1, 1)}}(T_i, zT_{7-i}, zT_7) \geqslant 0, ~\text{and}~ \rho_{G_{E(2; 2; 1, 1)}}(T_{7-i}, zT_i, zT_7) \geqslant 0.
				\end{aligned}
			\end{equation*}
			and the spectral radius of $S^{(i)}_z$ is not bigger than $2$, for $1 \leq i \leq 6.$
			
			\item The pair $(S^{(i)}_z, P_z),1\leq i \leq 6,$ is a $\Gamma_{E(2; 1; 2)}$-contraction for every $z \in \mathbb{T}$.
			
			\item The fundamental equations in \eqref{Fundamental 1} have unique solutions $F_i$ and $F_{7-i}$ in $\mathcal{B}(\mathcal{D}_{T_7})$ for $1\leq i \leq 6.$  Moreover, the operator $F_i + zF_{7-i}, 1\leq i \leq 6,$ has numerical radius not bigger than $1$ for every $z \in \mathbb{T}$.
		\end{enumerate}
	\end{thm}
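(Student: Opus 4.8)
The plan is to prove the four implications in turn, feeding the conclusion of each step into the next; three of them reduce to known facts (transfer of the spectral-set property under a polynomial map, the characterization of tetrablock contractions, and the Agler--Young characterization of $\Gamma_{E(2;1;2)}$-contractions), while $(3)\Rightarrow(4)$ rests on one explicit operator identity. For $(1)\Rightarrow(2)$ I would use the polynomial coordinate projection $\pi_i(x_1,\dots,x_7)=(x_i,x_{7-i},x_7)$. The first task is to check, at the level of defining matrices, that $\pi_i$ maps $\Gamma_{E(3;3;1,1,1)}$ into $\Gamma_{E(2;2;1,1)}$: for $A$ with $\mu_{E(3;3;1,1,1)}(A)\le1$ the numbers $x_i,x_{7-i},x_7$ form a $2\times2$ principal minor, its complementary minor and $\det A$, and one must realize them as the $(1,1)$ entry, $(2,2)$ entry and determinant of a single $2\times2$ contraction. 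Granting this containment, for $f\in\mathcal O(\Gamma_{E(2;2;1,1)})$ the composite $f\circ\pi_i$ lies in $\mathcal O(\Gamma_{E(3;3;1,1,1)})$ with $\|f\circ\pi_i\|_{\infty,\Gamma_{E(3;3;1,1,1)}}\le\|f\|_{\infty,\Gamma_{E(2;2;1,1)}}$, and $(f\circ\pi_i)(\mathbf{T})=f(T_i,T_{7-i},T_7)$; hence $\Gamma_{E(2;2;1,1)}$ is a spectral set for $(T_i,T_{7-i},T_7)$.

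For $(2)\Rightarrow(3)$ I would quote the characterization of tetrablock contractions from \cite{Bhattacharyya}, which gives $\rho_{G_{E(2;2;1,1)}}(A,zB,zP)\ge0$ and $\rho_{G_{E(2;2;1,1)}}(B,zA,zP)\ge0$ for every tetrablock contraction $(A,B,P)$ and every $z\in\mathbb T$; applied to $(T_i,T_{7-i},T_7)$ these are the two inequalities in $(3)$. The bound $r(S^{(i)}_z)\le2$ follows from the spectral mapping theorem together with $\sigma(T_i,T_{7-i},T_7)\subseteq\Gamma_{E(2;2;1,1)}$ and the elementary estimate $|x_i+zx_{7-i}|\le|x_i|+|x_{7-i}|\le2$, valid since $|x_i|\le1$ and $|x_{7-i}|\le1$ on the tetrablock.

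The crux is $(3)\Rightarrow(4)$. Put $X_i=T_i-T_{7-i}^*T_7$, so that $S^{(i)}_z-(S^{(i)}_z)^*P_z=X_i+zX_{7-i}$, and note $P_z^*P_z=T_7^*T_7$ and $D_{P_z}=D_{T_7}$ for $z\in\mathbb T$. Expanding both sides shows that, for all $\beta,z\in\mathbb T$,
\[
\rho_{G_{E(2;1;2)}}(\beta S^{(i)}_z,\beta^2 P_z)=\rho_{G_{E(2;2;1,1)}}(T_i,\beta z\,T_{7-i},\beta z\,T_7)+\rho_{G_{E(2;2;1,1)}}(T_{7-i},\beta T_i,\beta T_7),
\]
the quadratic terms $\pm(T_{7-i}^*T_{7-i}-T_i^*T_i)$ cancelling and the linear parts combining to $-\beta(X_i+zX_{7-i})-\bar\beta(X_i+zX_{7-i})^*$. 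Because the inequalities of $(3)$ hold at every point of $\mathbb T$, instantiating the first at $\beta z$ and the second at $\beta$ makes the right-hand side positive, so $\rho_{G_{E(2;1;2)}}(\beta S^{(i)}_z,\beta^2 P_z)\ge0$ for all $\beta\in\mathbb T$. Averaging the two inequalities of $(3)$ over $z\in\mathbb T$ kills the linear terms and leaves $I-T_7^*T_7\ge0$, so $P_z$ is a contraction; with $r(S^{(i)}_z)\le2$ this is exactly the Agler--Young characterization of a $\Gamma_{E(2;1;2)}$-contraction, which gives $(4)$.

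For $(4)\Rightarrow(5)$, each $\Gamma_{E(2;1;2)}$-contraction $(S^{(i)}_z,P_z)$ carries a unique fundamental operator $F(z)\in\mathcal B(\mathcal D_{T_7})$ with $\omega(F(z))\le1$ and $X_i+zX_{7-i}=D_{T_7}F(z)D_{T_7}$. Evaluating at $z=1$ and $z=-1$ and solving gives $F_i=\tfrac12(F(1)+F(-1))$ and $F_{7-i}=\tfrac12(F(1)-F(-1))$ in $\mathcal B(\mathcal D_{T_7})$, which solve the fundamental equations \eqref{Fundamental 1}; uniqueness is immediate since $D_{T_7}GD_{T_7}=0$ forces $G=0$ on $\overline{\operatorname{Ran}}\,D_{T_7}=\mathcal D_{T_7}$. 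As $F_i+zF_{7-i}$ also solves the defining equation of $(S^{(i)}_z,P_z)$, uniqueness of the fundamental operator forces $F(z)=F_i+zF_{7-i}$, whence $\omega(F_i+zF_{7-i})=\omega(F(z))\le1$ for every $z\in\mathbb T$, the last assertion of $(5)$. The step I expect to be most delicate is the geometric containment $\pi_i(\Gamma_{E(3;3;1,1,1)})\subseteq\Gamma_{E(2;2;1,1)}$ in $(1)\Rightarrow(2)$: expressing the three minor-coordinates as the entries and determinant of a genuine $2\times2$ contraction forces one to unpack the structured singular value $\mu_{E(3;3;1,1,1)}$ rather than the ordinary operator norm, and it is there that the block-diagonal structure of $E(3;3;1,1,1)$ must be exploited.
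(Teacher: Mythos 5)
First, a point of comparison: this paper does not prove the statement at all --- it is imported verbatim as [Theorem 2.4, \cite{apal3}], so there is no in-paper proof to measure you against. What follows is an assessment of your proposal on its own terms. Your architecture (coordinate projection, then Bhattacharyya's tetrablock inequalities, then a pencil identity, then the $z=\pm1$ splitting) is the natural one, and the two steps carrying real operator-theoretic content are correct: I verified that both sides of your key identity in $(3)\Rightarrow(4)$ equal $2(I-T_7^*T_7)-\beta(X_i+zX_{7-i})-\bar\beta(X_i+zX_{7-i})^*$ with $X_i=T_i-T_{7-i}^*T_7$, the quadratic terms cancelling exactly as you claim; and your $(4)\Rightarrow(5)$ (evaluate the fundamental operator $F(z)$ of $(S^{(i)}_z,P_z)$ at $z=\pm1$, split into even and odd parts, recover $F(z)=F_i+zF_{7-i}$ by uniqueness) is clean and complete.

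There are, however, two genuine gaps. The first is in $(1)\Rightarrow(2)$: everything rests on the containment $\pi_i(\Gamma_{E(3;3;1,1,1)})\subseteq\Gamma_{E(2;2;1,1)}$, which you explicitly defer. That containment \emph{is} the content of the implication, and it is not the matrix exercise your sketch suggests: no specialization of the variables in $\det\bigl(I-A\,\mathrm{diag}(z_1,z_2,z_3)\bigr)$ produces the tetrablock polynomial $1-x_1w_1-x_6w_2+x_7w_1w_2$, so one cannot pass from $\mu_{E(3;3;1,1,1)}(A)\le1$ to a $2\times2$ contraction by restricting variables; the proofs in the source literature go through the fractional-linear characterizations of $\Gamma_{E(3;3;1,1,1)}$ (the Theorem 2.9-type results of \cite{apal1} that this paper itself invokes in Section 4), not through a direct realization. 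You also omit the joint-spectrum requirement $\sigma(T_i,T_{7-i},T_7)\subseteq\Gamma_{E(2;2;1,1)}$ in the definition of spectral set, though it follows from the same containment via the spectral mapping theorem.

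The second gap is in $(2)\Rightarrow(3)$, and it is the step where citing without expanding actually fails. You quote \cite{Bhattacharyya} as giving $\rho_{G_{E(2;2;1,1)}}(A,zB,zP)\ge0$ and $\rho_{G_{E(2;2;1,1)}}(B,zA,zP)\ge0$ for every tetrablock contraction, with $\rho_{G_{E(2;2;1,1)}}$ as defined in this paper. With that definition the quoted claim is false. Expanding the tetrablock magic-function inequality $\bigl\|(zP-A)(zB-I)^{-1}\bigr\|\le1$ gives $(I-P^*P)+(B^*B-A^*A)-\bigl[z(B-A^*P)+\bar z(B-A^*P)^*\bigr]\ge0$: the quadratic term enters with the \emph{opposite} sign to the paper's $\rho$. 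Concretely, $(A,B,P)=(0,I,0)$ is a tetrablock contraction --- indeed $\mathbf{T}=(0,0,0,0,0,I,0)$ is a $\Gamma_{E(3;3;1,1,1)}$-contraction, since the point $(0,\dots,0,1,0)$ is realized by the matrix with $a_{23}=1$, $a_{32}=-1$ and zeros elsewhere, which has $\mu_{E(3;3;1,1,1)}\le1$ --- yet the paper's formula gives $\rho_{G_{E(2;2;1,1)}}(0,zI,0)=-2\,\mathrm{Re}(z)\,I\not\ge0$ at $z=1$. So either the definition of $\rho_{G_{E(2;2;1,1)}}$ carries a sign typo (it should read $+(T_2^*T_2-T_1^*T_1)$, matching Bhattacharyya's original inequalities), or condition $(3)$ as literally stated is unreachable and the theorem as printed is false; either way, your step asserts an intermediate claim that is wrong under the stated conventions, and expanding the inequality you were citing would have surfaced this. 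Fortunately your $(3)\Rightarrow(4)$ uses only the \emph{sum} of the two inequalities, in which the quadratic terms cancel, so that step and the rest of your chain survive verbatim once the sign in $(3)$ is corrected; the same robustness is presumably why the error propagated unnoticed from \cite{apal3} into this paper.
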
	
The following theorem [Theorem $4.5$, \cite{apal3}] provides the sufficient conditions for the existence of  $\Gamma_{E(3; 3; 1, 1, 1)} $-isometric dilation under the assumption that $\mathbf{T} = (T_1, \dots, T_7)$ is a $\Gamma_{E(3; 3; 1, 1, 1)}$-contraction, with its fundamental operators $F_i$ and $F_{7-i},$ for $1\leq i \leq 6,$ which satisfy the following conditions:
\begin{equation}
[F_i,F_j]=0~~{\rm{and}}~~[F_{7-i}^*,F_j]=[F_{7-j}^*,F_i], 1\leq i,j \leq 6.
\end{equation}

\begin{thm}[Conditional Dilation of $\Gamma_{E(3; 3; 1, 1, 1)}$-Contraction]\label{conddilation}
		Let $\mathbf{T} = (T_1, \dots, T_7) $ be a $\Gamma_{E(3; 3; 1, 1, 1)} $-contraction define on a Hilbert space $\mathcal H$ with the fundamental operator $F_i$ and $F_{7-i},$ for $1\le i \leq 6,$ which satisfy the following conditions: \begin{enumerate}
			\item[$(i)$] $[F_i, F_j] = 0, 1\leq i, j \leq 6$;
			
			\item[$(ii)$] $[F^*_{7-i}, F_j] = [F^*_{7-j}, F_i], 1\leq i, j \leq  6$.
		\end{enumerate} Let 
		\[\mathcal{K} = \mathcal{H} \oplus \mathcal{D}_{T_7} \oplus \mathcal{D}_{T_7} \oplus \dots = \mathcal{H} \oplus l^2(\mathcal{D}_{T_7}).\] Let  $\mathbf{V}=(V_1, \dots, V_7)$ be a $7$-tuple of operators defined on $\mathcal K$
by
		\begin{equation}\label{v7}
			\begin{aligned}
				&V_i =
				\begin{bmatrix}
					T_i & 0 & 0 & \dots\\
					F^*_{7-i}D_{T_7} & F_i & 0 & \dots\\
					0 & F^*_{7-i} & F_i & \dots\\
					0 & 0 & F^*_{7-i} & \dots\\
					\vdots & \vdots & \vdots & \ddots
				\end{bmatrix}, 1\leq i\leq 6,~{\rm{and}}~~
				V_7 =
				\begin{bmatrix}
					T_7 & 0 & 0 & \dots\\
					D_{T_7} & 0 & 0 & \dots\\
					0 & I & 0 & \dots\\
					0 & 0 & I & \dots\\
					\vdots & \vdots & \vdots & \ddots
				\end{bmatrix}.
			\end{aligned}
		\end{equation}
Then we have the following:
\begin{enumerate}

\item $\mathbf{V}$ is a minimal $\Gamma_{E(3; 3; 1, 1, 1)} $-isometric dilation of $\mathbf{T}$.

\item If there exists a $\Gamma_{E(3; 3; 1, 1, 1)} $-isometric dilation $\textbf{W}= (W_1, \dots, W_7)$ of $\textbf{T}$ such that $W_7$ is a minimal isometric dilation of $T_7$, then $\textbf{W}$ is unitarily equivalent to $\textbf{V}$. Furthermore, the above conditions $(i)$ and $(ii)$ are also valid.
\end{enumerate}
	\end{thm}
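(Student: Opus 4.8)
The plan is to establish the two assertions in turn, exploiting throughout that on the wandering part $l^2(\mathcal D_{T_7})\cong H^2(\mathcal D_{T_7})$ the operators in \eqref{v7} are analytic Toeplitz operators. I would first record the single-variable content: a direct block computation shows $V_7^*V_7=I$, $V_7^*|_{\mathcal H}=T_7^*$, and $\overline{\mathrm{span}}\{V_7^nh:h\in\mathcal H,\ n\ge 0\}=\mathcal K$, so that $V_7$ is precisely the Schaffer minimal isometric dilation of the contraction $T_7$. The remaining dilation relations $V_i^*|_{\mathcal H}=T_i^*$ ($1\le i\le 6$) are read off from the lower-triangular block shape of $V_i$, whose $(1,1)$ entry is $T_i$ with vanishing entries above it. Finally, the same block arithmetic, together with the fundamental equation \eqref{Fundamental 1}, yields the algebraic relation $V_i=V_{7-i}^*V_7$: the $(1,1)$ entry of $V_{7-i}^*V_7$ is $T^*_{7-i}T_7+D_{T_7}F_iD_{T_7}=T_i$, and the remaining entries match by inspection.

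The core of part (1) is the commutativity of $\mathbf V$. I would expand $V_iV_j-V_jV_i$ band by band relative to $\mathcal K=\mathcal H\oplus l^2(\mathcal D_{T_7})$. On the purely $l^2$ part $V_i$ acts as the analytic Toeplitz operator $T_{F_i+zF^*_{7-i}}$ and $V_7$ as $M_z$, so comparing symbols the main diagonal vanishes iff $[F_i,F_j]=0$ (hypothesis $(i)$), the first sub-diagonal vanishes iff $[F^*_{7-i},F_j]=[F^*_{7-j},F_i]$ (hypothesis $(ii)$), and the second sub-diagonal vanishes iff $[F^*_{7-i},F^*_{7-j}]=0$, the adjoint of $(i)$. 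Only the coupling band linking $\mathcal H$ to the first copy of $\mathcal D_{T_7}$ remains, where I must check
\begin{equation*}
F^*_{7-i}D_{T_7}T_j+F_iF^*_{7-j}D_{T_7}=F^*_{7-j}D_{T_7}T_i+F_jF^*_{7-i}D_{T_7}.
\end{equation*}
Here I would invoke the defect identity $D_{T_7}T_k=F_kD_{T_7}+F^*_{7-k}D_{T_7}T_7$, which holds for the tetrablock triple $(T_k,T_{7-k},T_7)$ (a tetrablock contraction by Theorem \ref{fundam}$(2)$, see \cite{Bhattacharyya}); substituting it for $k=i,j$, the $D_{T_7}T_7$ terms cancel by $[F^*_{7-i},F^*_{7-j}]=0$ and the residual $D_{T_7}$ terms collapse to $\bigl([F^*_{7-i},F_j]-[F^*_{7-j},F_i]\bigr)D_{T_7}$, which is zero by $(ii)$. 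This coupling computation is the one place where both hypotheses must be used simultaneously, and I regard it as the key step of part (1).

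With commutativity in hand, I would conclude that $\mathbf V$ is a $\Gamma_{E(3;3;1,1,1)}$-isometry. The restriction of $\mathbf V$ to $l^2(\mathcal D_{T_7})$ is the Toeplitz tuple with symbols $F_i+zF^*_{7-i}$ over the shift $M_z$; since $\mathbf T$ is a $\Gamma_{E(3;3;1,1,1)}$-contraction, Theorem \ref{fundam}$(5)$ supplies $\omega(F_i+zF_{7-i})\le 1$ for $z\in\mathbb T$, so by the model characterization of pure $\Gamma_{E(3;3;1,1,1)}$-isometries (conditions $(i)$, $(ii)$ together with this numerical-radius bound, as in \cite{apal3}) this restriction is a pure $\Gamma_{E(3;3;1,1,1)}$-isometry. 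Combined with the commutativity of the full tuple and the relations $V_i=V_{7-i}^*V_7$, this exhibits $\mathbf V$ as a $\Gamma_{E(3;3;1,1,1)}$-isometry; together with $V_i^*|_{\mathcal H}=T_i^*$ and the minimality of $V_7$ already established, $\mathbf V$ is a minimal $\Gamma_{E(3;3;1,1,1)}$-isometric dilation of $\mathbf T$, proving (1).

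For part (2), let $\mathbf W=(W_1,\dots,W_7)$ be a $\Gamma_{E(3;3;1,1,1)}$-isometric dilation of $\mathbf T$ with $W_7$ a minimal isometric dilation of $T_7$. By uniqueness of the minimal isometric dilation (Sz.-Nagy), there is a unitary from $\mathcal K_{\mathbf W}$ onto $\mathcal K$ that fixes $\mathcal H$ and carries $W_7$ to $V_7=M_z$, so I may assume $W_7=V_7$ on $\mathcal H\oplus H^2(\mathcal D_{T_7})$. The rigidity step is to force $W_i=V_i$: since $W_i$ commutes with the shift $W_7$ and co-extends $T_i$ (so $W_i^*|_{\mathcal H}=T_i^*$), its compression to the wandering part must be an analytic Toeplitz operator, and the $\Gamma$-isometry relation $W_i=W_{7-i}^*W_7$ together with the fundamental equation determines both its symbol and its coupling block to be those of $V_i$. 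Hence $\mathbf W$ is unitarily equivalent to $\mathbf V$, and since the pure part so obtained is exactly the Toeplitz model of part (1), its governing fundamental operators are $F_i,F_{7-i}$, which therefore satisfy $(i)$ and $(ii)$. I expect this rigidity argument --- showing that commuting with the shift plus the co-extension constraint leaves no freedom beyond unitary equivalence --- to be the main obstacle of the theorem, the band computations of part (1) being essentially verification once the defect identity is available.
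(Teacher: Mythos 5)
You should first be aware that this paper never proves Theorem \ref{conddilation}: it is imported verbatim as [Theorem $4.5$, \cite{apal3}] and used as a black box, so there is no in-paper proof to compare your argument against, and I can only judge it on its own terms. Structurally, you follow the expected Schaffer-type construction (the one Bhattacharyya uses for the tetrablock \cite{Bhattacharyya}, which \cite{apal3} evidently adapts): $V_7$ is the minimal isometric dilation of $T_7$; the relations $V_i^*|_{\mathcal H}=T_i^*$ and $V_i=V_{7-i}^*V_7$ follow from the block form and \eqref{Fundamental 1}; and commutativity is checked band by band. Those computations are correct, and your coupling identity, reduced via $D_{T_7}T_k=F_kD_{T_7}+F_{7-k}^*D_{T_7}T_7$ (the identity this paper itself invokes as [Lemma 2.7, \cite{apal2}] inside the proof of Theorem \ref{Necessary Conditions 1}) to $\bigl([F_{7-i}^*,F_j]-[F_{7-j}^*,F_i]\bigr)D_{T_7}+[F_{7-i}^*,F_{7-j}^*]D_{T_7}T_7=0$, is exactly where hypotheses $(i)$ and $(ii)$ must enter. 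The mechanism you propose for part (2) (Sz.-Nagy uniqueness for $W_7$, then the commutant of the shift forcing degree-one analytic Toeplitz symbols, then uniqueness of the fundamental operators) is also the right one, and the ``if and only if'' character of your band computation does return conditions $(i)$ and $(ii)$ for $\mathbf W$.

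The genuine gap is the step ``this exhibits $\mathbf V$ as a $\Gamma_{E(3;3;1,1,1)}$-isometry.'' The characterization this paper works with ([Theorem $4.4$, \cite{apal2}], spelled out in Example \ref{Exam 3}) demands, besides commutativity, $V_i=V_{7-i}^*V_7$ and $V_7$ isometric, the bound $r(V_i)\leqslant 1$ for $1\leqslant i\leqslant 6$ --- and you establish no norm or spectral-radius control whatsoever on the full operators $V_i$. The free input is only $\omega(F_i+zF_{7-i})\leqslant 1$ from Theorem \ref{fundam}(5); converting it into $\sup_{z\in\mathbb T}\|F_i+zF_{7-i}^*\|\leqslant 1$ requires two observations you omit: (a) under $(i)$ and $(ii)$ (taking $j=7-i$ in $(ii)$ gives $[F_i^*,F_i]=[F_{7-i}^*,F_{7-i}]$) the symbol $F_i+zF_{7-i}^*$ is \emph{normal} for every $z\in\mathbb T$, so its norm equals its numerical radius; and (b) $\sup_{z\in\mathbb T}\omega(F_i+zF_{7-i}^*)=\sup_{z\in\mathbb T}\omega(F_i+zF_{7-i})$, since both suprema equal $\sup_{\theta,\phi}\tfrac12\bigl\|e^{i\theta}F_i+e^{-i\theta}F_i^*+e^{i\phi}F_{7-i}+e^{-i\phi}F_{7-i}^*\bigr\|$. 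One then finishes with the triangular spectral inclusion $\sigma(V_i)\subseteq\sigma(T_i)\cup\sigma\bigl(V_i|_{l^2(\mathcal D_{T_7})}\bigr)$ (cf. \cite{Du}) together with $\|T_i\|\leqslant 1$, which holds because $(T_i,T_{7-i},T_7)$ is a tetrablock contraction by Theorem \ref{fundam}. You instead delegate this entire quantitative step to a ``model characterization of pure $\Gamma_{E(3;3;1,1,1)}$-isometries \dots as in \cite{apal3}''; since the theorem you are proving is itself Theorem $4.5$ of \cite{apal3}, outsourcing the only nontrivial estimate of part (1) to that same reference leaves your argument incomplete as a self-contained proof.
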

We will establish the necessary conditions for the existence of $\Gamma_{E(3; 3; 1, 1, 1)}$-isometric dilation. 
	
	\begin{thm}\label{Necessary Conditions 1}
		Let $\textbf{T} = (T_1, \dots, T_7)$ be a $\Gamma_{E(3; 3; 1, 1, 1)}$-contraction on a Hilbert space $\mathcal{H}$ with fundamental operators  $F_i,1\leq i \leq 6.$ Then each of the following conditions is necessary for $\textbf{T}$ to have a $\Gamma_{E(3; 3; 1, 1, 1)}$-isometric dilation:
		\begin{enumerate}
			\item The $6$-tuple of operator $(F_1, \dots, F_6)$ has a joint dilation to a $6$-tuple of commuting subnormal operator $(\tilde{F}_1, \dots, \tilde{F}_6)$, that is, there exists an isometric embedding $\Theta$ of $\mathcal{D}_{T_7}$ into a larger Hilbert space $\mathcal{E}$ so that $F_j = \Theta^*\tilde{F}_j\Theta$ for $1 \leqslant j \leqslant 6,$ where $(\tilde{F}_1, \dots, \tilde{F}_6)$ can be extended to a  $6$-tuple of commuting normal operators $(N_1, \dots, N_6)$ with Taylor joint spectrum contained in the union of the $6$-tori $$\{(z_1, \dots, z_6) : |z_i| = |z_{7-i}| \leqslant 1 \,\, \text{for} \,\, 1 \leqslant i \leqslant 6\}.$$
			
			\item $(F^*_iD_{T_7}T_i - F^*_{7-i}D_{T_7}T_{7-i})|_{Ker D_{T_7}} = 0$ for $1 \leqslant i \leqslant 6$.
			
			\item $(F^*_iF^*_{7-i} - F^*_{7-i}F^*_i)D_{T_7}T_7|_{Ker D_{T_7}} = 0$ for $1 \leqslant i \leqslant 6$.
			
					\end{enumerate}
	\end{thm}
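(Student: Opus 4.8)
The plan is to start from an arbitrary $\Gamma_{E(3;3;1,1,1)}$-isometric dilation $\mathbf V=(V_1,\dots,V_7)$ of $\mathbf T$ on $\mathcal K\supseteq\mathcal H$ (no minimality assumed, which is what makes these conditions genuinely weaker than $(i),(ii)$ of Theorem \ref{conddilation}) and to read off all three conclusions from two structural identities. First I would record the elementary facts: $\mathcal H$ is coinvariant, i.e. $V_i^*|_{\mathcal H}=T_i^*$, so $\mathcal H$ is invariant under each $V_i^*$; $V_7$ is an isometry; each triple $(V_i,V_{7-i},V_7)$ is a tetrablock isometry, so its fundamental equation degenerates to $V_i=V_{7-i}^*V_7$ for $1\le i\le6$; and the $V_i$ commute. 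Writing $\mathcal E:=\mathcal K\ominus\mathcal H$, I would introduce the isometry $\Lambda\colon\mathcal D_{T_7}\to\mathcal E$ determined by $\Lambda D_{T_7}h=(I-P_{\mathcal H})V_7h$ (isometric since $\|(I-P_{\mathcal H})V_7h\|=\|D_{T_7}h\|$). Compressing $V_i=V_{7-i}^*V_7$ to $\mathcal H$ and comparing with \eqref{Fundamental 1} yields at once
\begin{equation*}
F_i^*D_{T_7}=\Lambda^*V_{7-i}|_{\mathcal H},\qquad 1\le i\le 6,\tag{B}
\end{equation*}
while expanding $\langle V_j(I-P_{\mathcal H})V_7h,(I-P_{\mathcal H})V_7h'\rangle$ and repeatedly using $V_j=V_{7-j}^*V_7$, $V_7^*V_7=I$, commutativity, and \eqref{Fundamental 1} gives the compression formula
\begin{equation*}
F_j=\Lambda^*V_j\Lambda,\qquad 1\le j\le 6.\tag{A}
\end{equation*}

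For condition $(2)$ I would note that for $h\in\operatorname{Ker}D_{T_7}$ one has $(I-P_{\mathcal H})V_ih=0$, because $V_ih=V_{7-i}^*V_7h=V_{7-i}^*T_7h\in\mathcal H$ ($T_7h\in\mathcal H$ and $\mathcal H$ is $V_{7-i}^*$-invariant), and similarly $(I-P_{\mathcal H})V_{7-i}h=0$. Applying (B) with $g=T_ih$ and $g=T_{7-i}h$, then substituting $T_ih=V_ih-(I-P_{\mathcal H})V_ih$ and using commutativity,
\[
(F_i^*D_{T_7}T_i-F_{7-i}^*D_{T_7}T_{7-i})h=\Lambda^*\big(V_i(I-P_{\mathcal H})V_{7-i}-V_{7-i}(I-P_{\mathcal H})V_i\big)h=0,
\]
which is exactly condition $(2)$.

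For condition $(3)$ I would first prove, for $h\in\operatorname{Ker}D_{T_7}$, the intermediate identities $F_{7-i}^*D_{T_7}T_7h=D_{T_7}T_ih$ and $F_i^*D_{T_7}T_7h=D_{T_7}T_{7-i}h$: indeed $F_{7-i}^*D_{T_7}T_7h=\Lambda^*V_iT_7h$ by (B), and since $V_7h=T_7h$ and $V_ih=T_ih$ on $\operatorname{Ker}D_{T_7}$, commutativity gives $V_iT_7h=V_7T_ih$, whence $\Lambda^*V_7T_ih=\Lambda^*\big(T_7T_ih+\Lambda D_{T_7}T_ih\big)=D_{T_7}T_ih$. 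Substituting these into the commutator,
\[
[F_i^*,F_{7-i}^*]\,D_{T_7}T_7h=F_i^*D_{T_7}T_ih-F_{7-i}^*D_{T_7}T_{7-i}h,
\]
which vanishes by condition $(2)$; thus $(3)$ reduces to $(2)$.

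For condition $(1)$ I would exploit that $\mathcal E=\mathcal K\ominus\mathcal H$ is invariant under every $V_j$ (as $\mathcal H$ is $V_j^*$-invariant), so $(V_1|_{\mathcal E},\dots,V_7|_{\mathcal E})$ is the restriction of the $\Gamma_{E(3;3;1,1,1)}$-isometry $\mathbf V$ to a joint invariant subspace and is therefore again a $\Gamma_{E(3;3;1,1,1)}$-isometry. Its first six components $\tilde F_j:=V_j|_{\mathcal E}$ are then commuting subnormal operators extending to the commuting normal components $N_j$ of a $\Gamma_{E(3;3;1,1,1)}$-unitary $\mathbf M$ with $\sigma(\mathbf M)\subseteq K$; taking $\Theta:=\Lambda$ (whose range lies in $\mathcal E$), formula (A) gives $F_j=\Lambda^*V_j\Lambda=\Theta^*\tilde F_j\Theta$. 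Finally the relations defining $K$, namely $x_j=\bar x_{7-j}x_7$ and $|x_7|=1$, force $|x_j|=|x_{7-j}|$, and membership in $\Gamma_{E(3;3;1,1,1)}$ bounds each coordinate by $1$, placing $\sigma(N_1,\dots,N_6)$ in the asserted union of $6$-tori. The main obstacle is the compression identity (A): proving $F_j=\Lambda^*V_j\Lambda$ is the computational heart and is precisely where the degenerate fundamental equation $V_j=V_{7-j}^*V_7$, the commutativity of $\mathbf V$, and \eqref{Fundamental 1} must be combined; once (A) and (B) are secured, $(2)$, $(3)$ and $(1)$ follow as above. A secondary point needing care is the coordinatewise bound $|x_j|\le1$ on $K$, which must be extracted from the $\mu_{E(3;3;1,1,1)}$-description of the domain.
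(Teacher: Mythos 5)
Your proposal is correct, and the identity (A) you flag as the computational heart does close exactly as you sketch: expanding $\langle V_j(I-P_{\mathcal H})V_7h,(I-P_{\mathcal H})V_7h'\rangle$, the four terms collapse via $V_7^*V_j=V_{7-j}^*$ and $V_j^*|_{\mathcal H}=T_j^*$ to $\langle (T_j-T_{7-j}^*T_7)h,h'\rangle=\langle F_jD_{T_7}h,D_{T_7}h'\rangle$, which gives $F_j=\Lambda^*V_j\Lambda$ by density of the range of $D_{T_7}$. Your skeleton coincides with the paper's: your $\Lambda$ is the paper's $\Theta$ (determined by $\Theta D_{T_7}=C_7$, the corner of $V_7$), your (A) is the paper's compression formula \eqref{E 6}, and part (1) is in both treatments obtained by restricting $\mathbf V$ to the jointly invariant subspace $\mathcal K\ominus\mathcal H$ and passing to a $\Gamma_{E(3; 3; 1, 1, 1)}$-unitary extension (you justify that the restriction is a $\Gamma_{E(3; 3; 1, 1, 1)}$-isometry directly from the definition, while the paper re-verifies it through the algebraic relations \eqref{E 7}). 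The genuine divergence is in parts (2) and (3). The paper proves global operator identities on all of $\mathcal H$ using $2\times 2$ block matrices: the $(2,1)$ entry of $V_iV_{7-i}=V_{7-i}V_i$ yields \eqref{E 9}, whose right-hand side $\Theta^*(\tilde F_i\tilde F_i^*-\tilde F_{7-i}\tilde F_{7-i}^*)\Theta D_{T_7}$ visibly annihilates ${\rm Ker}\,D_{T_7}$, and part (3) then requires the external identity $D_{T_7}T_i=F_iD_{T_7}+F^*_{7-i}D_{T_7}T_7$ from [Lemma 2.7, \cite{apal2}]. You instead argue entirely on ${\rm Ker}\,D_{T_7}$, exploiting the observation (not made explicit in the paper) that $V_jh=T_jh\in\mathcal H$ for $h\in{\rm Ker}\,D_{T_7}$; commutativity then gives (2) in one line via your identity (B), and (3) reduces to (2) through the restricted identities $F^*_{7-i}D_{T_7}T_7h=D_{T_7}T_ih$, which you re-derive from the dilation rather than cite. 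Your route is more self-contained and coordinate-free (it also dispenses with the minimality normalization, which the paper sets up via polynomial convexity but never actually uses), whereas the paper's route produces the unrestricted identities \eqref{E 9}--\eqref{E 10} as a by-product, slightly more information than the kernel statements. Both treatments leave implicit, at the same level, the coordinatewise bound $|x_j|\leqslant 1$ on points of $\Gamma_{E(3; 3; 1, 1, 1)}$ and the projection property of the Taylor joint spectrum invoked when ignoring the seventh coordinate.
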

	
	\begin{proof}
		Suppose that $\textbf{V} = (V_1, \dots, V_7)$ is a $\Gamma_{E(3; 3; 1, 1, 1)}$-isometric dilation of $\textbf{T}.$  It is important to note that $\Gamma_{E(3; 3; 1, 1, 1)}$ is a polynomially convex [Theorem $3.4$, \cite{apal1}]. Therefore, it is sufficient to work with polynomials instead of the entire algebra $\mathcal O(\Gamma_{E(3; 3; 1, 1, 1)}),$ and we can assume, without loss of generality, that	
\[\mathcal{K} = \overline{\rm{span}}\{V^{n_1}_1 \dots V^{n_7}_7h : h \in \mathcal{H}, n_1, \dots, n_7 \in \mathbb{N} \cup \{0\}\}.\]
		By definition, we have
		\[(V^*_1, \dots, V^*_7)|_{\mathcal{H}} = (T^*_1, \dots, T^*_7).\]
Let the $2\times 2$ block operator matrix of $V_i$ be of the form
		\begin{equation}\label{E 1}
\begin{aligned}
V_i&=
\begin{pmatrix}
T_i& 0\\
C_i& \tilde{F}_i
\end{pmatrix} \,\,\text{for}\,\, 1 \leqslant i \leqslant 7,
\end{aligned}
\end{equation}
with respect to the decomposition $\mathcal{K} = \mathcal{H} \oplus (\mathcal{K} \ominus \mathcal{H})$ of $\mathcal{K}$.  As $V_7$ is an isometry, by using \eqref{E 1}, we deduce that
		\begin{equation}\label{E 2}
			\begin{aligned}
				T^*_7T_7 + C^*_7C_7 = I_{\mathcal{H}},  \tilde{F}^*_7\tilde{F}_7 = I_{\mathcal{K} \ominus \mathcal{H}}.
			\end{aligned}
		\end{equation}
It implies from \eqref{E 2} that there exists an isometry $\Theta: \mathcal{D}_{T_7} \to \mathcal{K} \ominus \mathcal{H}$ such that
		\begin{equation}\label{E 3}
			\begin{aligned}
				\Theta D_{T_7} &= C_7.
			\end{aligned}
		\end{equation}
As $\textbf{V} = (V_1, \dots, V_7)$ is a $\Gamma_{E(3; 3; 1, 1, 1)}$-isometric, it follows from [Theorem $4.4$, \cite{apal2}] that  $V_i = V^*_{7-i}V_7$ for $1\leq i \leq 6.$ Thus, we have for $1\leq i \leq 6$
\begin{equation}\label{E11}
\begin{aligned}
\begin{pmatrix}
T_i& 0\\
C_i& \tilde{F}_i
\end{pmatrix}&=\begin{pmatrix}
T_{7-i}^*& C_{7-i}^*\\
0& \tilde{F}_{7-i}^*
\end{pmatrix}\begin{pmatrix}
T_7& 0\\
C_7& \tilde{F}_7
\end{pmatrix}\\&=\begin{pmatrix}
 T^*_{7-i}T_7+C^*_{7-i}C_7& C^*_{7-i}\tilde{F}_7\\
 \tilde{F}^*_{7-i}C_7& \tilde{F}^*_{7-i}\tilde{F}_7
\end{pmatrix}
\end{aligned}
\end{equation}		
From \eqref{E11}, we get		
	\begin{equation}\label{E 4}
			\begin{aligned}
				T_i - T^*_{7-i}T_7 = C^*_{7-i}C_7, \,\, C^*_{7-i}\tilde{F}_7 = 0, \,\, C_i = \tilde{F}^*_{7-i}C_7, \,\, \text{and} \,\, \tilde{F}_i = \tilde{F}^*_{7-i}\tilde{F}_7.
			\end{aligned}
		\end{equation}
From \eqref{Fundamental 1} and \eqref{E 4}, we deduce that 
		\begin{equation}\label{E 5}
			\begin{aligned}
				D_{T_7}F_iD_{T_7}
				&= T_i - T^*_{7-i}T_7 = C^*_{7-i}C_7 = C^*_7\tilde{F}_iC_7 = D_{T_7}\Theta^*\tilde{F}_i\Theta D_{T_7}.
			\end{aligned}
		\end{equation}
		By the uniqueness of the fundamental operators $F_i,1\leq i \leq 6$, we conclude that  
		\begin{equation}\label{E 6}
			\begin{aligned}
				F_i &= \Theta^*\tilde{F}_i\Theta ~\text{for}~ 1 \leqslant i \leqslant 6.
			\end{aligned}
		\end{equation}
It yields from \eqref{E 2} and \eqref{E 4} that
		\begin{equation}\label{E 7}
			\begin{aligned}
				\tilde{F}_i = \tilde{F}^*_{7-i}\tilde{F}_7 \,\,\text{and}\,\, \tilde{F}^*_7\tilde{F}_7 = I_{\mathcal{K} \ominus \mathcal{H}}  ~\text{for}~ 1 \leqslant i \leqslant 6.
\end{aligned}
		\end{equation}
		Since $\textbf{V}$ is a $\Gamma_{E(3; 3; 1, 1, 1)}$-isometry and $(\tilde{F}_1, \dots, \tilde{F}_7)=(V_1, \dots, V_7)_{|_{\mathcal{K} \ominus \mathcal{H}}},$ it implies that $(\tilde{F}_1, \dots, \tilde{F}_7)$ is a $\Gamma_{E(3; 3; 1, 1, 1)}$-contraction. As $(\tilde{F}_1, \dots, \tilde{F}_7)$ is a $\Gamma_{E(3; 3; 1, 1, 1)}$-contraction, we conclude from \eqref{E 7} that $\tilde{\textbf{F}} = (\tilde{F}_1, \dots, \tilde{F}_7)$ is a $\Gamma_{E(3; 3; 1, 1, 1)}$-isometry, and so by definition of $\Gamma_{E(3; 3; 1, 1, 1)} $-isometry, $\tilde{\textbf{F}}$ has a $\Gamma_{E(3; 3; 1, 1, 1)}$-unitary extension $\textbf{N} = (N_1, \dots, N_7)$ on a larger Hilbert space. Since $\textbf{N} = (N_1, \dots, N_7)$ is  $\Gamma_{E(3; 3; 1, 1, 1)}$-unitary, it follows from the definition of  $\Gamma_{E(3; 3; 1, 1, 1)}$-unitary that  the Taylor joint spectrum $\sigma(\textbf{N})$ of $\textbf{N}$ is contained in $K$ and $N_1, \dots, N_7$ are commuting normal operators.  By ignoring the $7$th co-ordinate, we conclude that the Taylor joint spectrum of  $\sigma(N_1, \dots, N_6)$ is contained in the union of $6$-tori $\{(z_1, \dots, z_6) : |z_i| = |z_{7-i}| \leqslant 1 \,\, \text{for} \,\, 1 \leqslant i \leqslant 6\}$, and part $(1)$ follows.
			
As $V_iV_{7-i} = V_{7-i}V_i$ for $ 1\leq i \leq 6,$ we see that 
		\begin{equation}\label{E 8}
			\begin{aligned}
				&C_iT_{7-i} + \tilde{F}_iC_{7-i}
				= C_{7-i}T_i + \tilde{F}_{7-i}C_i.\\
				\end{aligned}
		\end{equation}
It follows from \eqref{E 4} and \eqref{E 8} that for $1\leq i \leq 6,$
\begin{equation}\label{E12}
\begin{aligned}
C_{7-i}T_i-C_iT_{7-i}&= \tilde{F}^*_{i}C_7T_i- \tilde{F}^*_{7-i}C_7T_{7-i}\\&= \tilde{F}^*_{i}\Theta D_{T_7}T_i- \tilde{F}^*_{7-i}\Theta D_{T_7}T_{7-i}
\end{aligned}
\end{equation}
$${\rm{and}}$$
\begin{equation}\label{E13}
\begin{aligned}
\tilde{F}_iC_{7-i}-\tilde{F}_{7-i}C_{i}&= \tilde{F}_i\tilde{F}^*_{i}C_7-\tilde{F}_{7-i} \tilde{F}^*_{7-i}C_7\\&= ( \tilde{F}_i\tilde{F}^*_{i}-\tilde{F}_{7-i} \tilde{F}^*_{7-i})\Theta D_{T_7}.
\end{aligned}
\end{equation}		
From \eqref{E 8},\eqref{E12} and \eqref{E13}, we have
\begin{equation}\label{E14}
\tilde{F}^*_{i}\Theta D_{T_7}T_i- \tilde{F}^*_{7-i}\Theta D_{T_7}T_{7-i}=( \tilde{F}_i\tilde{F}^*_{i}-\tilde{F}_{7-i} \tilde{F}^*_{7-i})\Theta D_{T_7}.
\end{equation}
By multiplying $\Theta^*$ on the left side of \eqref{E14} and using \eqref{E 6}, we observe that
		\begin{equation}\label{E 9}
			\begin{aligned}
				F^*_iD_{T_7}T_i - F^*_{7-i}D_{T_7}T_{7-i}
				&= \Theta^*\tilde{F}^*_{i}\Theta D_{T_7}T_i- \Theta^*\tilde{F}^*_{7-i}\Theta D_{T_7}T_{7-i}\\&=\Theta^*( \tilde{F}_i\tilde{F}^*_{i}-\tilde{F}_{7-i} \tilde{F}^*_{7-i})\Theta D_{T_7}.
\end{aligned}
		\end{equation}
From \eqref{E 9}, we deduce that 
		\[(F^*_iD_{T_7}T_i - F^*_{7-i}D_{T_7}T_{7-i})|_{Ker D_{T_7}} = 0,\] part $(2)$ follows.
			
 By [Lemma 2.7, \cite{apal2}] and \eqref{E 9}, we have
		\begin{equation}\label{E 10}
			\begin{aligned}
				\Theta^*(\tilde{F}_i\tilde{F}^*_i - \tilde{F}_{7-i}\tilde{F}^*_{7-i})\Theta D_{T_7}
				&= F^*_iD_{T_7}T_i - F^*_{7-i}D_{T_7}T_{7-i}\\
				&= F^*_i(F_iD_{T_7} + F^*_{7-i}D_{T_7}T_7) - F^*_{7-i}(F_{7-i}D_{T_7} + F^*_iD_{T_7}T_7)\\
				&= (F^*_iF_i - F^*_{7-i}F_{7-i})D_{T_7} - (F^*_iF^*_{7-i} - F^*_{7-i}F^*_i)D_{T_7}T_7.
				\end{aligned}
		\end{equation}
It follows from \eqref{E 10} that
		\[(F^*_iF^*_{7-i} - F^*_{7-i}F^*_i)D_{T_7}T_7|_{Ker \mathcal{D}_{T_7}} = 0.\]
This completes the proof.
	\end{proof}
We discuss a class of $\Gamma_{E(3; 3; 1, 1, 1)}$-contractions that are always dilate, specifically those of the form $\textbf{T} = (T_1, T_2, T_1T_2, T_1T_2, T_1^2T_2, T_1T_2^2, T_1^2T_2^2),$ where $(T_1,T_2)$ denotes a pair of contractions.
\begin{thm}\label{Thm Example 1}
		Let $(T_1, T_2)$ be a pair of commuting contractions on a Hilbert space $\mathcal{H}$. Then the $7$-tuple of operators $\textbf{T} = (T_1, T_2, T_1T_2, T_1T_2, T_1^2T_2, T_1T_2^2, T_1^2T_2^2)$ is a $\Gamma_{E(3; 3; 1, 1, 1)}$-contraction. 	\end{thm}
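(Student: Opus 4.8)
The plan is to realize the $7$-tuple $\mathbf{T}$ as the image of the commuting pair $(T_1,T_2)$ under a fixed polynomial map that carries the closed bidisc into $\Gamma_{E(3;3;1,1,1)}$, and then to transport And\^o's inequality for the bidisc through this map. Concretely, I would define the polynomial map $\Phi:\mathbb{C}^2\to\mathbb{C}^7$ by
\[
\Phi(z_1,z_2)=\bigl(z_1,\,z_2,\,z_1z_2,\,z_1z_2,\,z_1^2z_2,\,z_1z_2^2,\,z_1^2z_2^2\bigr),
\]
so that $\Phi(T_1,T_2)=\mathbf{T}$ by commutativity of $T_1$ and $T_2$.

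The heart of the argument is the claim $\Phi(\overline{\mathbb{D}}^{\,2})\subseteq\Gamma_{E(3;3;1,1,1)}$. To prove it I would fix $(z_1,z_2)\in\overline{\mathbb{D}}^{\,2}$ and consider the diagonal matrix $A=\operatorname{diag}(z_1,z_2,z_1z_2)$. A direct check of the seven principal-minor coordinates shows that the point determined by $A$ is exactly $\Phi(z_1,z_2)$; for instance $x_3=a_{11}a_{22}-a_{12}a_{21}=z_1z_2$, $x_5=a_{11}a_{33}=z_1^2z_2$, and $x_7=\det A=z_1^2z_2^2$. It then remains to verify $\mu_{E(3;3;1,1,1)}(A)\le 1$. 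For a diagonal $A=\operatorname{diag}(\lambda_1,\lambda_2,\lambda_3)$ and $X=\operatorname{diag}(w_1,w_2,w_3)\in E$ one has $\det(I-AX)=\prod_{i}(1-\lambda_i w_i)$, which vanishes precisely when $\lambda_i w_i=1$ for some $i$; minimizing $\|X\|=\max_i|w_i|$ over such $X$ (put all the weight on the index of largest $|\lambda_i|$) gives $\inf\|X\|=1/\max_i|\lambda_i|$, hence $\mu_{E(3;3;1,1,1)}(A)=\max_i|\lambda_i|$. With $\lambda_1=z_1,\lambda_2=z_2,\lambda_3=z_1z_2$ and $|z_1|,|z_2|\le 1$ this maximum is at most $1$, which proves the claim.

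With the claim in hand, the two requirements in the definition of a $\Gamma_{E(3;3;1,1,1)}$-contraction follow quickly. For the spectral inclusion, the spectral mapping theorem for the Taylor joint spectrum gives $\sigma(\mathbf{T})=\Phi(\sigma(T_1,T_2))$, and since $T_1,T_2$ are commuting contractions we have $\sigma(T_1,T_2)\subseteq\overline{\mathbb{D}}^{\,2}$, whence $\sigma(\mathbf{T})\subseteq\Phi(\overline{\mathbb{D}}^{\,2})\subseteq\Gamma_{E(3;3;1,1,1)}$. For contractivity of $\rho_{\mathbf{T}}$, I would recall that $\Gamma_{E(3;3;1,1,1)}$ is polynomially convex [Theorem $3.4$, \cite{apal1}], so by Oka--Weil it suffices to bound $\|p(\mathbf{T})\|$ for polynomials $p$. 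For such $p$, the composition $p\circ\Phi$ is a polynomial in $(z_1,z_2)$, and And\^o's theorem \cite{paulsen} (the bidisc is a spectral set for a commuting pair of contractions) yields
\[
\|p(\mathbf{T})\|=\bigl\|(p\circ\Phi)(T_1,T_2)\bigr\|\le\|p\circ\Phi\|_{\infty,\overline{\mathbb{D}}^{\,2}}=\sup_{(z_1,z_2)\in\overline{\mathbb{D}}^{\,2}}\bigl|p(\Phi(z_1,z_2))\bigr|\le\|p\|_{\infty,\Gamma_{E(3;3;1,1,1)}},
\]
the final inequality being exactly the inclusion $\Phi(\overline{\mathbb{D}}^{\,2})\subseteq\Gamma_{E(3;3;1,1,1)}$.

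I expect no genuine obstacle: the only real content is the geometric claim $\Phi(\overline{\mathbb{D}}^{\,2})\subseteq\Gamma_{E(3;3;1,1,1)}$, and within it the computation of $\mu_{E(3;3;1,1,1)}$ on diagonal matrices, while everything else is a formal transport of And\^o's inequality along $\Phi$. The one point I would handle with care is that the target domain is described through the \emph{principal minors} of a $3\times 3$ matrix rather than through symmetric functions, so one must check that the single diagonal matrix $\operatorname{diag}(z_1,z_2,z_1z_2)$ simultaneously reproduces all seven coordinates of $\Phi(z_1,z_2)$.
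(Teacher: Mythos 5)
Your proposal is correct and follows essentially the same route as the paper: the same polynomial map $(z_1,z_2)\mapsto(z_1,z_2,z_1z_2,z_1z_2,z_1^2z_2,z_1z_2^2,z_1^2z_2^2)$, the same diagonal realization $A=\operatorname{diag}(z_1,z_2,z_1z_2)$, and the same transport of And\^o's inequality to bound $\|p(\mathbf{T})\|$. The only differences are cosmetic strengthenings: you verify $\mu_{E(3;3;1,1,1)}(A)\le 1$ by a direct (and correct) computation on diagonal matrices where the paper instead notes $\|A\|\le 1$ and cites [Theorem $2.41$, \cite{apal1}], and you additionally check the Taylor joint spectrum inclusion and invoke polynomial convexity explicitly, steps the paper leaves implicit.
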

	
	\begin{proof} Define the map $\pi : \mathbb{C}^2 \to \mathbb{C}^7$ defined by
		\begin{equation}
			\begin{aligned}
				\pi(x, y) &= (x, y, xy, xy, x^2y, xy^2, x^2y^2).
			\end{aligned}
		\end{equation}
Let $$A=\begin{pmatrix}
x& 0 & 0\\0 & y & 0\\
0 & 0 & xy
\end{pmatrix}.$$  Suppose that $(x,y)\in \overline{\mathbb D}^2,$ then $\|A\|\leq 1.$ It follows from [Theorem $2.41$, \cite{apal1}] that $(x, y, xy, xy, x^2y, xy^2, x^2y^2)\in \Gamma_{E(3; 3; 1, 1, 1)}.$ Thus, we get $\pi(\overline{\mathbb{D}}^2) \subseteq \Gamma_{E(3; 3; 1, 1, 1)}$. For any $p \in \mathbb{C}[z_1, z_2,\dots,z_7],$ we observe that  $p \circ \pi$ is a rational function defined on $\overline{\mathbb D}^2.$
Observe that \begin{equation*}
			\begin{aligned}
				||p(\textbf{T})||
				&= ||p \circ \pi(T_1, T_2)||\\
				&\leqslant ||p \circ \pi||_{\infty, \overline{\mathbb{D}}^2} \,\, [{\rm{by~ von ~Neuman ~ineqality~for~}} \mathbb D^2] \\
				&= ||p||_{\infty, \pi(\overline{\mathbb{D}}^2)}\\
				&\leqslant ||p||_{\infty, \Gamma_{E(3; 3; 1, 1, 1)}}.
			\end{aligned}
		\end{equation*}
This shows that $\textbf{T}$ is a $\Gamma_{E(3; 3; 1, 1, 1)}$-contraction. This completes the proof.
	\end{proof}

\begin{thm}\label{contr}
Let $(T_1, T_2)$ be a pair of commuting contractions on a Hilbert space $\mathcal{H}$. Then the $7$-tuple of operators $\textbf{T} = (T_1, T_2, T_1T_2, T_1T_2, T_1^2T_2, T_1T_2^2, T_1^2T_2^2)$ always has $\Gamma_{E(3; 3; 1, 1, 1)}$-isometric dilation.

\end{thm}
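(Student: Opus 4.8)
The plan is to reduce to Ando's theorem through the factorization $\mathbf T = \pi(T_1,T_2)$ already used in Theorem \ref{Thm Example 1}, where $\pi(x,y)=(x,y,xy,xy,x^2y,xy^2,x^2y^2)$. First I would apply Ando's theorem to the commuting contractive pair $(T_1,T_2)$ to obtain a commuting pair of unitaries $(U_1,U_2)$ on a Hilbert space $\widetilde{\mathcal K}\supseteq\mathcal H$ that power-dilates $(T_1,T_2)$, so that $P_{\mathcal H}U_1^mU_2^n|_{\mathcal H}=T_1^mT_2^n$ for all $m,n\in\mathbb N\cup\{0\}$.

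I would then push this pair through $\pi$. Since $U_1,U_2$ are commuting unitaries, every entry of $\pi(U_1,U_2)$ is a monomial $U_1^mU_2^n$ with $m,n\geq 0$, hence $\pi(U_1,U_2)$ is a commuting $7$-tuple of normal (indeed unitary) operators. Because $\sigma(U_1,U_2)\subseteq\mathbb T^2$, the spectral mapping theorem for the Taylor joint spectrum of a commuting normal tuple gives $\sigma\big(\pi(U_1,U_2)\big)\subseteq\pi(\mathbb T^2)$. A short computation confirms $\pi(\mathbb T^2)\subseteq K$: for $|x|=|y|=1$ and $(x_1,\dots,x_7)=\pi(x,y)$ one has $\overline{x}_6x_7=\overline{x}\,\overline{y}^2x^2y^2=x=x_1$, $\overline{x}_4x_7=\overline{x}\,\overline{y}x^2y^2=xy=x_3$, $\overline{x}_2x_7=\overline{y}x^2y^2=x^2y=x_5$ and $|x_7|=1$, while $\pi(\mathbb T^2)\subseteq\Gamma_{E(3;3;1,1,1)}$ follows from Theorem \ref{Thm Example 1}. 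Thus $\pi(U_1,U_2)$ is a $\Gamma_{E(3;3;1,1,1)}$-unitary.

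Next I would restrict to the joint invariant subspace $\mathcal K_+=\overline{\operatorname{span}}\{U_1^mU_2^nh:m,n\in\mathbb N\cup\{0\},\,h\in\mathcal H\}$, which is invariant under $U_1$ and $U_2$ and therefore under every entry of $\pi(U_1,U_2)$. Setting $W_i=U_i|_{\mathcal K_+}$ and $\mathbf V=\pi(W_1,W_2)=\pi(U_1,U_2)|_{\mathcal K_+}$ exhibits $\mathbf V$ as the restriction of a $\Gamma_{E(3;3;1,1,1)}$-unitary to a joint invariant subspace, hence a $\Gamma_{E(3;3;1,1,1)}$-isometry. It then remains to verify the co-extension identity $V_i^*|_{\mathcal H}=T_i^*$. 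The technical heart is the case $i=1,2$: testing $W_i^*h$ and $T_i^*h$ against the spanning vectors $U_1^mU_2^ng$ ($g\in\mathcal H$) and using the power-dilation identity together with $T_i=T_1^{\delta_{i1}}T_2^{\delta_{i2}}$, both pairings collapse to $\langle h,T_1^{m+\delta_{i1}}T_2^{n+\delta_{i2}}g\rangle$, so $W_i^*h=T_i^*h\in\mathcal H$. Thus $\mathcal H$ is invariant for $W_1^*$ and $W_2^*$; iterating gives $(W_1^aW_2^b)^*|_{\mathcal H}=T_2^{*b}T_1^{*a}|_{\mathcal H}=(T_1^aT_2^b)^*|_{\mathcal H}$, and reading off the seven monomials defining $\pi$ yields $V_i^*|_{\mathcal H}=T_i^*$ for $1\leq i\leq 7$.

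Together these steps show that $\mathbf V$ is a $\Gamma_{E(3;3;1,1,1)}$-isometry with $V_i^*|_{\mathcal H}=T_i^*$, i.e.\ a $\Gamma_{E(3;3;1,1,1)}$-isometric dilation of $\mathbf T$, proving the theorem. The only substantial external input is Ando's commuting unitary dilation; I expect the main obstacle to be largely bookkeeping, namely carrying the co-extension identity cleanly through the repeated coordinates of $\pi$ and ensuring the inclusion $\sigma(\pi(U_1,U_2))\subseteq K$ is justified at the level of the Taylor joint spectrum rather than coordinatewise.
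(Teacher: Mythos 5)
Your proof is correct, and its core reduction is the same as the paper's: dilate the pair $(T_1,T_2)$ by Ando's theorem and push the dilation through the map $\pi(x,y)=(x,y,xy,xy,x^2y,xy^2,x^2y^2)$ of Theorem \ref{Thm Example 1}. Where you differ is in how the resulting $7$-tuple is certified to be a $\Gamma_{E(3; 3; 1, 1, 1)}$-isometric dilation. The paper takes an Ando \emph{isometric lift} $(V_1,V_2)$ of $(T_1,T_2)$, notes that $\pi(V_1,V_2)$ is then automatically a commuting isometric lift of $\mathbf{T}$, and concludes by citing the characterization of $\Gamma_{E(3; 3; 1, 1, 1)}$-isometries in [Theorem $4.4$, \cite{apal2}]; the identities $V_i=V_{7-i}^*V_7$ required there are immediate for monomials in commuting isometries, and the co-extension property costs nothing because the Ando lift already satisfies $V_i^*|_{\mathcal H}=T_i^*$. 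You instead take Ando's commuting \emph{unitary} power dilation $(U_1,U_2)$, show $\pi(U_1,U_2)$ is a $\Gamma_{E(3; 3; 1, 1, 1)}$-unitary via the spectral mapping theorem for the Taylor spectrum together with the direct check $\pi(\mathbb{T}^2)\subseteq K$, restrict to the joint invariant subspace generated by $\mathcal H$, and then verify $W_i^*|_{\mathcal H}=T_i^*$ by the pairing argument (the standard fact that the minimal isometric piece of a unitary power dilation is a co-extension). Your route buys self-containedness: it uses only the definition of $\Gamma_{E(3; 3; 1, 1, 1)}$-isometry as the restriction of a $\Gamma_{E(3; 3; 1, 1, 1)}$-unitary, rather than the characterization theorem, and it makes the distinguished-boundary inclusion $\pi(\mathbb{T}^2)\subseteq K$ explicit; the cost is the extra bookkeeping of the invariant-subspace restriction and the manual verification of the lift identity, both of which the paper's choice of the isometric (rather than unitary) form of Ando's theorem lets it skip. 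All of your individual steps check out, including the spectral-mapping justification and the computation $\overline{x}_6x_7=x_1$, $\overline{x}_4x_7=x_3$, $\overline{x}_2x_7=x_5$, $|x_7|=1$ on $\mathbb{T}^2$.
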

\begin{proof}
Let $(V_1,V_2)$ be an Ando isometric dilation of $(T_1, T_2)$. Then it is easy to see that $(V_1, V_2, V_1V_2, V_1V_2, \\V_1^2V_2, V_1V_2^2, V_1^2V_2^2)$ is a $7$-tuple of commuting isometic lift of $\textbf{T} = (T_1, T_2, T_1T_2, T_1T_2, T_1^2T_2, T_1T_2^2, T_1^2T_2^2)$. It follows from [Theorem $4.4$, \cite{apal2}] that $(V_1, V_2, V_1V_2, V_1V_2, V_1^2V_2, V_1V_2^2, V_1^2V_2^2)$ is a
$\Gamma_{E(3; 3; 1, 1, 1)}$-isometry. This completes the proof.
\end{proof}

\begin{defn}\label{isometric dilation21}
A commuting $5$-tuple of operators $(W_1,W_2,W_3,\tilde{W}_1,\tilde{W}_2)$ acting on a Hilbert space $\mathcal K_1 \supseteq \mathcal H_1$ is said to be a $\Gamma_{E(3; 2; 1, 2)}$-isometric dilation of a $\Gamma_{E(3; 2;1, 2)}$-contraction $(S_1,S_2,S_3,\tilde{S}_1,\tilde{S}_2)$ acting on a Hilbert space $\mathcal H_1,$ if it satisfies the following properties:
\begin{itemize}
\item  $(W_1,W_2,W_3,\tilde{W}_1,\tilde{W}_2)$ is $\Gamma_{E(3; 2; 1, 2)}$-isometry;
\item $W_i^{*}|_{\mathcal H_1}=S_i^{*}$ for $1\leq i \leq 3$ and $\tilde{W}_j^{*}|_{\mathcal H_1}=\tilde{S}_j^{*}$ for $1\leq j \leq 2.$
\end{itemize}

\end{defn}	
It yields from the aforementioned definition that $(W_1,W_2,W_3,\tilde{W}_1,\tilde{W}_2)$ is $\Gamma_{E(3; 2; 1, 2)}$-isometric dilation of a $\Gamma_{E(3; 2;1, 2)}$-contraction $(S_1,S_2,S_3,\tilde{S}_1,\tilde{S}_2)$  is equivalent to saying that $(W_1^*,W_2^*,W_3^*,\tilde{W}_1^*,\tilde{W}_2^*)$ is a $\Gamma_{E(3; 2; 1, 2)} $-co-isometric extension of  $(S_1^*,S_2^*,S_3^*,\tilde{S}_1^*,\tilde{S}_2^*)$. Moreover, we call the dilation as minimal if $$\tilde{\mathcal K}_{0}=\overline{{\rm{span}}}\{ W_{3}^{n}h:h\in\mathcal H ~{\rm{and}}~n \in \mathbb N\cup\{0\}\}.$$
\begin{defn}\label{fundamental}
Let $(S_1, S_2, S_3, \tilde{S}_1, \tilde{S}_2)$ be a $5$-tuple of commuting bounded operators defined on some Hilbert space $\mathcal H_1$. The equations are as stated below:
\begin{equation}
			\begin{aligned}\label{funda1}
					&S_1 - \tilde{S}^*_2S_3 = D_{S_3}G_1D_{S_3},\,\, \tilde{S}_2 - S^*_1S_3 = D_{S_3}\tilde{G}_2D_{S_3},
				\end{aligned}
			\end{equation}
			$${\rm{and}}$$
		\begin{equation}
				\begin{aligned}\label{funda11}
				&\frac{S_2}{2} - \frac{\tilde{S}^*_1}{2}S_3 = D_{S_3}G_2D_{S_3}, \,\, \frac{\tilde{S}_1}{2} - \frac{S^*_2}{2}S_3 = D_{S_3}\tilde{G}_1D_{S_3},				\end{aligned}
			\end{equation}
where $G_1,G_2,\tilde{G}_1$ and $\tilde{G}_2$ in $\mathcal{B}(\mathcal{D}_{S_3}),$ are referred to as the  fundamental equations for $(S_1, S_2, S_3, \tilde{S}_1, \tilde{S}_2)$.		
\end{defn}
For any $z\in \mathbb C,$ we define the operators $\tilde{S}_z = S_1 + z\tilde{S}_2, \tilde{P}_z = zS_3$ and $\hat{S}_z = \frac{S_2}{2} + z\frac{\tilde{S}_1}{2}, \hat{P}_z = zS_3.$ 
	
	\begin{thm}[Theorem $2.6$, \cite{apal3}]\label{s1s2}
		Let $(S_1, S_2, S_3, \tilde{S}_1, \tilde{S}_2)$ be a $5$-tuple of commuting bounded operators defined on some Hilbert space $\mathcal H_1$. Then in the following $(1) \Rightarrow (2) \Rightarrow (3) \Rightarrow (4) \Rightarrow (5):$
		\begin{enumerate}
			\item $\mathbf{S} = (S_1, S_2, S_3, \tilde{S}_1, \tilde{S}_2)$ is a $\Gamma_{E(3; 2; 1, 2)}$-contraction.
			
			\item $(S_1, \tilde{S}_2, S_3)$ and $(\frac{S_2}{2}, \frac{\tilde{S}_1}{2}, S_3)$ are $\Gamma_{E(2; 2; 1, 1)}$-contractions.
	\item For every $z \in \mathbb{T}$, we have
			\begin{equation}
				\begin{aligned}
					&\rho_{G_{E(2; 2; 1, 1)}}(S_1, z\tilde{S}_2, zS_3) \geqslant 0 ~\text{and}~
					\rho_{G_{E(2; 2; 1, 1)}}(\tilde{S}_2, zS_1, zS_3) \geqslant 0,
				\end{aligned} \end{equation}
				\begin{equation}
				\begin{aligned}
					&\rho_{G_{E(2; 2; 1, 1)}}\left(\frac{S_2}{2}, z\frac{\tilde{S}_1}{2}, zS_3\right) \geqslant 0 ~\text{and}~
					\rho_{G_{E(2; 2; 1, 1)}}\left(\frac{\tilde{S}_1}{2}, z\frac{S_2}{2}, zS_3\right) \geqslant 0
				\end{aligned}
			\end{equation}
			and the spectral radius of $\tilde{S}_z$  and $\hat{S}_z$ are not bigger than $2$.
			
			\item The pair of operators $(\tilde{S}_z, \tilde{P}_z)$ and  $(\hat{S}_z, \hat{P}_z)$ are  $\Gamma_{E(2; 1; 2)}$-contractions for every $z \in \mathbb{T}$.
			
			\item The fundamental equations in \eqref{funda1} and \eqref{funda11} have unique solutions $G_1, \tilde{G}_2$ and $G_2,\tilde{G}_1$ in $\mathcal{B}(\mathcal{D}_{S_3})$, respectively. Moreover, the operators $G_1 + z\tilde{G}_2$ and $G_2 + z\tilde{G}_1$ have numerical radius not bigger than $1$ for every $z \in \mathbb{T}$.
		\end{enumerate}
	\end{thm}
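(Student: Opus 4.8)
The proof runs along the same lines as Theorem \ref{fundam}, with the two coordinate projections of $\Gamma_{E(3;3;1,1,1)}$ replaced by the two natural projections attached to $\Gamma_{E(3;2;1,2)}$. For $(1)\Rightarrow(2)$, the plan is to introduce the polynomial maps $\pi_1,\pi_2:\mathbb{C}^5\to\mathbb{C}^3$ given by $\pi_1(x_1,x_2,x_3,y_1,y_2)=(x_1,y_2,x_3)$ and $\pi_2(x_1,x_2,x_3,y_1,y_2)=(\tfrac{x_2}{2},\tfrac{y_1}{2},x_3)$, and to check that each carries $\Gamma_{E(3;2;1,2)}$ into the tetrablock $\Gamma_{E(2;2;1,1)}$ at the level of the defining matrix models (the containment statement analogous to [Theorem $2.41$, \cite{apal1}]). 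Granting this, for any matrix-valued polynomial $p$ on $\Gamma_{E(2;2;1,1)}$ the composition $p\circ\pi_k$ is a polynomial on $\Gamma_{E(3;2;1,2)}$ with $\|p\circ\pi_k\|_{\infty,\Gamma_{E(3;2;1,2)}}\leq\|p\|_{\infty,\Gamma_{E(2;2;1,1)}}$; feeding $p\circ\pi_k$ into the spectral-set hypothesis of $(1)$ and noting $\pi_1(\mathbf{S})=(S_1,\tilde{S}_2,S_3)$, $\pi_2(\mathbf{S})=(\tfrac{S_2}{2},\tfrac{\tilde{S}_1}{2},S_3)$ then shows that each of these triples is a $\Gamma_{E(2;2;1,1)}$-contraction.

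For $(2)\Rightarrow(3)$ and $(3)\Rightarrow(4)$ I would import the operator theory of the tetrablock from \cite{Bhattacharyya}: a commuting triple $(A,B,P)$ with $P$ a contraction is a tetrablock contraction exactly when $\rho_{G_{E(2;2;1,1)}}(A,zB,zP)\geq 0$ and $\rho_{G_{E(2;2;1,1)}}(B,zA,zP)\geq 0$ for all $z\in\mathbb{T}$ together with $r(A+zB)\leq 2$, and this is in turn equivalent to $(A+zB,zP)$ being a $\Gamma_{E(2;1;2)}$-contraction for every $z\in\mathbb{T}$. Applying this dictionary to $(S_1,\tilde{S}_2,S_3)$ with $A+zB=\tilde{S}_z$, $zP=\tilde{P}_z$, and to $(\tfrac{S_2}{2},\tfrac{\tilde{S}_1}{2},S_3)$ with $A+zB=\hat{S}_z$, $zP=\hat{P}_z$, yields the four positivity inequalities and the spectral-radius bounds of $(3)$, and then the two $\Gamma_{E(2;1;2)}$-contraction statements of $(4)$.

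For $(4)\Rightarrow(5)$ I would use the fundamental-equation theory of $\Gamma_{E(2;1;2)}$-contractions: every such pair $(S,P)$ admits a unique $F\in\mathcal{B}(\mathcal{D}_P)$ solving $S-S^*P=D_PFD_P$, and $\omega(F)\leq 1$. Since $|z|=1$ forces $D_{\tilde{P}_z}=D_{S_3}$, a direct expansion gives $\tilde{S}_z-\tilde{S}_z^*\tilde{P}_z=(S_1-\tilde{S}_2^*S_3)+z(\tilde{S}_2-S_1^*S_3)=D_{S_3}(G_1+z\tilde{G}_2)D_{S_3}$, and likewise $\hat{S}_z-\hat{S}_z^*\hat{P}_z=D_{S_3}(G_2+z\tilde{G}_1)D_{S_3}$, so the fundamental operators of the two pairs are precisely $G_1+z\tilde{G}_2$ and $G_2+z\tilde{G}_1$. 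Existence and uniqueness of $G_1,\tilde{G}_2,G_2,\tilde{G}_1$ then follow by specializing $z$ to two distinct unimodular values (e.g.\ $z=\pm1$) and invoking injectivity of $X\mapsto D_{S_3}XD_{S_3}$ on $\mathcal{B}(\mathcal{D}_{S_3})$, which recovers the individual equations \eqref{funda1} and \eqref{funda11}; the bounds $\omega(G_1+z\tilde{G}_2)\leq 1$ and $\omega(G_2+z\tilde{G}_1)\leq 1$ are inherited directly from the symmetrized-bidisc numerical-radius estimate.

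The main obstacle is the containment check in $(1)\Rightarrow(2)$: showing that $\pi_1$ and $\pi_2$ genuinely map $\Gamma_{E(3;2;1,2)}$ into the tetrablock requires working with the structured-singular-value description $\mu_{E(3;2;1,2)}(A)\leq 1$ and producing, for a given $3\times 3$ symbol $A$, a $2\times 2$ contraction whose $(1,1)$-entry, $(2,2)$-entry and determinant realize the target tetrablock coordinates; once this geometric fact is in hand (from the model theory in \cite{apal1}), the remaining implications are bookkeeping on top of the established tetrablock and symmetrized-bidisc correspondences.
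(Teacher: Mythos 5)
The paper never proves Theorem \ref{s1s2}: it is imported verbatim as [Theorem 2.6, \cite{apal3}] and used purely as background, so there is no in-paper proof to compare your proposal against. Judged on its own merits, your outline follows the route one would expect (and that mirrors the analogous chain for Theorem \ref{fundam}): coordinate projections onto tetrablock data for $(1)\Rightarrow(2)$, the tetrablock positivity and spectral-radius theory of \cite{Bhattacharyya} for $(2)\Rightarrow(3)\Rightarrow(4)$, and the fundamental-operator theory of $\Gamma_{E(2;1;2)}$-contractions (Theorem \ref{FO of Gamma}) for $(4)\Rightarrow(5)$. Your $(4)\Rightarrow(5)$ step is complete and correct: since $D_{\tilde{P}_z}=D_{S_3}$ for $|z|=1$, expanding $\tilde{S}_z-\tilde{S}_z^*\tilde{P}_z$ and specializing $z=\pm 1$ produces solutions of \eqref{funda1} and \eqref{funda11}, injectivity of $X\mapsto D_{S_3}XD_{S_3}$ on $\mathcal{B}(\mathcal{D}_{S_3})$ gives uniqueness, and the numerical-radius bounds are inherited from the fundamental operator of each pair $(\tilde{S}_z,\tilde{P}_z)$, $(\hat{S}_z,\hat{P}_z)$.

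Three cautions. First, in $(1)\Rightarrow(2)$ you feed \emph{matrix-valued} polynomials into the hypothesis, but condition (1) is only a spectral-set (scalar von Neumann) assumption, not a complete spectral-set assumption; the argument must be run with scalar polynomials, which is also all that is needed to conclude that $(S_1,\tilde{S}_2,S_3)$ and $\left(\tfrac{S_2}{2},\tfrac{\tilde{S}_1}{2},S_3\right)$ are $\Gamma_{E(2;2;1,1)}$-contractions. Second, you attribute to \cite{Bhattacharyya} an equivalence (``is a tetrablock contraction exactly when \dots, and this is in turn equivalent to \dots'') that is not available; if (2), (3) and (4) were mutually equivalent, the theorem would not be stated as a one-way chain of implications. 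Only the forward implications are known, and fortunately those are all your argument actually uses, but the claimed dictionary should be weakened accordingly (and the passage from the two tetrablock $\rho$-inequalities to positivity of $\rho_{G_{E(2;1;2)}}(\alpha\tilde{S}_z,\alpha^2\tilde{P}_z)$ requires the explicit identity relating the two kernels, not just a citation). Third, the genuine mathematical content of $(1)\Rightarrow(2)$ --- that the maps $(x_1,x_2,x_3,y_1,y_2)\mapsto(x_1,y_2,x_3)$ and $(x_1,x_2,x_3,y_1,y_2)\mapsto\left(\tfrac{x_2}{2},\tfrac{y_1}{2},x_3\right)$ carry $\Gamma_{E(3;2;1,2)}$ into the tetrablock --- is deferred to \cite{apal1} rather than proved; you correctly flag this as the crux, but as written your proposal is an outline conditional on that containment rather than a self-contained proof.
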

	
The following theorem [Theorem $4.6$, \cite{apal3}] gives the sufficient conditions for the existence of  $\Gamma_{E(3; 2; 1, 2)}$-isometric dilation under the assumption that $\textbf{S} = (S_1, S_2, S_3, \tilde{S}_1, \tilde{S}_2)$ is a $\Gamma_{E(3; 2; 1, 2)}$-contraction, with its fundamental operators $G_1,2G_2,2\tilde{G}_1$ and $\tilde{G}_2$ which satisfy the following conditions:

		\begin{enumerate}
			\item[$(i)$] $[G_1,\tilde{G}_i]=0$ for $1 \leq i \leq 2,$ $[G_2,\tilde{G}_j]=0$ for $1 \leq j \leq 2,$ and $[G_1, G_2] = [\tilde{G}_1, \tilde{G}_2]  = 0$;
			
			\item[$(ii)$] $[G_1, G^*_1] = [\tilde{G}_2, \tilde{G}^*_2], [G_2, G^*_2] = [\tilde{G}_1, \tilde{G}^*_1], [G_1, \tilde{G}^*_1] = [G_2, \tilde{G}^*_2], [\tilde{G}_1, G^*_1] = [\tilde{G}_2, G^*_2],$\\$ [G_1, G^*_2] = [\tilde{G}_1, \tilde{G}^*_2], [G^*_1, G_2] = [\tilde{G}^*_1, \tilde{G}_2]$.
		\end{enumerate}		

\begin{thm}[Conditional Dilation of $\Gamma_{E(3; 2; 1, 2)}$-Contraction]\label{condilation1}
		Let $\textbf{S} = (S_1, S_2, S_3, \tilde{S}_1, \tilde{S}_2)$ be a $\Gamma_{E(3; 2; 1, 2)}$-contraction defined on a Hilbert space $\mathcal H$ with the fundamental operators $G_1,2G_2,2\tilde{G}_1$ and $\tilde{G}_2$ which satisfy the following conditions:

		\begin{enumerate}
			\item[$(i)$] $[G_1,\tilde{G}_i]=0$ for $1 \leq i \leq 2,$ $[G_2,\tilde{G}_j]=0$ for $1 \leq j \leq 2,$ and $[G_1, G_2] = [\tilde{G}_1, \tilde{G}_2]  = 0$;
			
			\item[$(ii)$] $[G_1, G^*_1] = [\tilde{G}_2, \tilde{G}^*_2], [G_2, G^*_2] = [\tilde{G}_1, \tilde{G}^*_1], [G_1, \tilde{G}^*_1] = [G_2, \tilde{G}^*_2], [\tilde{G}_1, G^*_1] = [\tilde{G}_2, G^*_2],$\\$ [G_1, G^*_2] = [\tilde{G}_1, \tilde{G}^*_2], [G^*_1, G_2] = [\tilde{G}^*_1, \tilde{G}_2]$.
		\end{enumerate}		
Let \begin{equation*}
			\begin{aligned}
				\mathcal{\tilde{K}} &= \mathcal{H} \oplus \mathcal{D}_{S_3} \oplus \mathcal{D}_{S_3} \oplus \dots = \mathcal{H} \oplus l^2(\mathcal{D}_{S_3}).
			\end{aligned}
		\end{equation*}
Suppose that $\textbf{W}=(W_1,W_2,W_3,\tilde{W}_1, \tilde{W}_2)$ is a $5$-tuple of bounded operators on $\tilde{K}$ by
		\begin{equation}\label{w3}
			\begin{aligned}
				&W_1 =
				\begin{bmatrix}
					S_1 & 0 & 0 & \dots\\
					\tilde{G}^*_2D_{S_3} & G_1 & 0 & \dots\\
					0 & \tilde{G}^*_2 & G_1 & \dots\\
					0 & 0 & \tilde{G}^*_2 & \dots\\
					\vdots & \vdots & \vdots & \ddots
				\end{bmatrix}, \,\,
				W_2 =
				\begin{bmatrix}
					S_2 & 0 & 0 & \dots\\
					2\tilde{G}^*_1D_{S_3} & 2G_2 & 0 & \dots\\
					0 & 2\tilde{G}^*_1 & 2G_2 & \dots\\
					0 & 0 & 2\tilde{G}^*_1 & \dots\\
					\vdots & \vdots & \vdots & \ddots
				\end{bmatrix},
				W_3 =
				\begin{bmatrix}
					S_3 & 0 & 0 & \dots\\
					D_{S_3} & 0 & 0 & \dots\\
					0 & I & 0 & \dots\\
					0 & 0 & I & \dots\\
					\vdots & \vdots & \vdots & \ddots
				\end{bmatrix},\\
				&\hspace{2cm}
				\tilde{W}_1 =
				\begin{bmatrix}
					\tilde{S}_1 & 0 & 0 & \dots\\
					2G^*_2D_{S_3} & 2\tilde{G}_1 & 0 & \dots\\
					0 & 2G^*_2 &2 \tilde{G}_1 & \dots\\
					0 & 0 & 2G^*_2 & \dots\\
					\vdots & \vdots & \vdots & \ddots
				\end{bmatrix} \,\,{\rm{and}}~
				\tilde{W}_2 =
				\begin{bmatrix}
					\tilde{S}_2 & 0 & 0 & \dots\\
					G^*_1D_{S_3} & \tilde{G}_2 & 0 & \dots\\
					0 & G^*_1 & \tilde{G}_2 & \dots\\
					0 & 0 & G^*_1 & \dots\\
					\vdots & \vdots & \vdots & \ddots
				\end{bmatrix}.
			\end{aligned}
		\end{equation}
Then we have the following: 
\begin{enumerate}
\item $\textbf{W}$ is a minimal $\Gamma_{E(3; 2; 1, 2)}$-isometric dilation of $\textbf{S}$.
		
\item If there exists a $\Gamma_{E(3; 2; 1, 2)}$-isometric dilation $\textbf{X} = (X_1, X_2, X_3, \tilde{X}_1, \tilde{X}_2)$ of $\textbf{S}$ such that $X_3$ is a minimal isometric dilation of $S_3$, then $\textbf{X}$ is unitarily equivalent to $\textbf{W}$. Moreover, the above identities  $(i)$ and $(ii)$ are also valid.
\end{enumerate}
\end{thm}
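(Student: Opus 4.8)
The plan is to follow the blueprint of Theorem \ref{conddilation}, exploiting the fact that by Theorem \ref{s1s2} a $\Gamma_{E(3;2;1,2)}$-contraction carries the two tetrablock triples $(S_1,\tilde S_2,S_3)$ and $(\tfrac{S_2}{2},\tfrac{\tilde S_1}{2},S_3)$, and assembling their dilations into a single $\Gamma_{E(3;2;1,2)}$-isometry. First I would record the easy structural facts. The operator $W_3$ in \eqref{w3} is exactly the Sz.-Nagy minimal isometric dilation of the contraction $S_3$ realized on $\mathcal H\oplus l^2(\mathcal D_{S_3})$; hence $W_3$ is an isometry, and $\tilde{\mathcal K}_0=\overline{\rm{span}}\{W_3^nh:h\in\mathcal H,\ n\ge 0\}=\tilde{\mathcal K}$, which is the minimality demanded in Definition \ref{isometric dilation21}. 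Moreover every matrix in \eqref{w3} is block lower triangular relative to $\mathcal H\oplus(\tilde{\mathcal K}\ominus\mathcal H)$ and displays $S_i$ (resp. $\tilde S_j$) in its $(1,1)$ corner; taking adjoints produces block upper triangular matrices, so $W_i^*|_{\mathcal H}=S_i^*$ and $\tilde W_j^*|_{\mathcal H}=\tilde S_j^*$ automatically. Thus $\mathbf W$ co-extends $\mathbf S$, and the only substantive points remaining are commutativity and the $\Gamma_{E(3;2;1,2)}$-isometry property.

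Next I would prove that $(W_1,W_2,W_3,\tilde W_1,\tilde W_2)$ commutes by comparing the banded block matrices entrywise. Each product splits into a diagonal part, governed by the generators on $l^2(\mathcal D_{S_3})$, and cross terms pairing the $(2,1)$ blocks $\tilde G_2^*D_{S_3}$, $2\tilde G_1^*D_{S_3}$, $D_{S_3}$, $2G_2^*D_{S_3}$, $G_1^*D_{S_3}$ against the corners $S_i,\tilde S_j$. The diagonal equalities reduce precisely to the relations $[G_1,G_2]=0$, $[G_1,\tilde G_i]=0$, $[G_2,\tilde G_j]=0$, $[\tilde G_1,\tilde G_2]=0$ of hypothesis $(i)$, together with the subdiagonal matchings furnished by hypothesis $(ii)$. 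The cross terms I would dispatch by substituting the fundamental equations \eqref{funda1}--\eqref{funda11} and the identities of [Lemma $2.7$, \cite{apal2}] to rewrite products such as $G_i^*D_{S_3}S_j$ in terms of $D_{S_3}$ and $S_3$; the residual commutators then collapse exactly because of $(i)$ and $(ii)$. This is the most calculation-heavy step, and it mirrors the commutativity verification embedded in Theorem \ref{conddilation}.

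To identify $\mathbf W$ as a $\Gamma_{E(3;2;1,2)}$-isometry I would pass to the model picture. Under $l^2(\mathcal D_{S_3})\cong H^2(\mathcal D_{S_3})$ the diagonal (pure) part $\tilde{\mathbf F}$ of $\mathbf W$ becomes the tuple of analytic Toeplitz operators $\bigl(T_{G_1+\tilde G_2^*z},\,T_{2G_2+2\tilde G_1^*z},\,T_z,\,T_{2\tilde G_1+2G_2^*z},\,T_{\tilde G_2+G_1^*z}\bigr)$, all with symbols linear in $z$. Verifying that this Toeplitz tuple is a pure $\Gamma_{E(3;2;1,2)}$-isometry is where conditions $(i)$ and $(ii)$ do their real work: they are exactly what make the two tetrablock triples $(T_{G_1+\tilde G_2^*z},T_{\tilde G_2+G_1^*z},T_z)$ and $(T_{G_2+\tilde G_1^*z},T_{\tilde G_1+G_2^*z},T_z)$ into commuting tetrablock isometries satisfying $W_1=\tilde W_2^*W_3$ and $W_2=\tilde W_1^*W_3$ in the sense of [Theorem $4.4$, \cite{apal2}]. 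Since $W_3$ is an isometry and these two tetrablock relations hold on all of $\tilde{\mathcal K}$, the characterization of $\Gamma_{E(3;2;1,2)}$-isometries ([Theorem $4.4$, \cite{apal2}]) then yields that $\mathbf W$ is a $\Gamma_{E(3;2;1,2)}$-isometry, finishing part $(1)$. I expect this reduction to the pure Toeplitz model, together with the bookkeeping showing that $(i)$--$(ii)$ are precisely sufficient for it, to be the main obstacle.

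For part $(2)$, given any $\Gamma_{E(3;2;1,2)}$-isometric dilation $\mathbf X$ with $X_3$ a minimal isometric dilation of $S_3$, uniqueness of the Sz.-Nagy minimal isometric dilation supplies a unitary $\mathcal H\oplus l^2(\mathcal D_{S_3})\to\mathcal K$ fixing $\mathcal H$ and carrying $W_3$ onto $X_3$. Writing each $X_i,\tilde X_j$ in $2\times2$ block form with respect to $\mathcal H\oplus(\mathcal K\ominus\mathcal H)$ and feeding in the isometry relations $X_1=\tilde X_2^*X_3$, $X_2=\tilde X_1^*X_3$ exactly as in the proof of Theorem \ref{Necessary Conditions 1}, the $(1,1)$ corners are forced to equal $S_i,\tilde S_j$, the $(2,1)$ blocks are forced, by uniqueness of the fundamental operators in \eqref{funda1}--\eqref{funda11}, to be the $D_{S_3}$-columns appearing in \eqref{w3}, and the restriction to $\mathcal K\ominus\mathcal H$ is the pure $\Gamma_{E(3;2;1,2)}$-isometry built from $G_1,G_2,\tilde G_1,\tilde G_2$. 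Hence $\mathbf X$ is unitarily equivalent to $\mathbf W$. Finally, the commutativity of $\mathbf X$ (equivalently $\mathbf W$), read back through the entrywise identities of the commutativity step, returns precisely the relations $(i)$ and $(ii)$, so these necessarily hold whenever such a dilation exists.
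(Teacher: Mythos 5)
You should know at the outset that this paper never actually proves Theorem \ref{condilation1}: it is imported wholesale from [Theorem 4.6, \cite{apal3}], so your attempt can only be judged against the expected argument, i.e.\ the tetrablock-style proof of \cite{Bhattacharyya} that \cite{apal3} adapts and that this paper rehearses in miniature in Example \ref{Exam 3}. Measured against that, your architecture is the right one: $W_3$ in \eqref{w3} is the Schaffer minimal isometric dilation of $S_3$ (giving minimality and $W_i^*|_{\mathcal H}=S_i^*$, $\tilde{W}_j^*|_{\mathcal H}=\tilde{S}_j^*$); commutativity of $\mathbf{W}$ does reduce, entry by entry, to $(i)$--$(ii)$ combined with the identities $D_{S_3}S_1=G_1D_{S_3}+\tilde{G}_2^*D_{S_3}S_3$, $D_{S_3}S_2=2G_2D_{S_3}+2\tilde{G}_1^*D_{S_3}S_3$ and their companions; and your part $(2)$ --- transporting $X_3$ to $W_3$ by uniqueness of the minimal isometric dilation, pinning the block entries via the pairing relations and the uniqueness of the fundamental operators, then reading $(i)$--$(ii)$ back from the entrywise commutativity identities --- is the standard uniqueness argument and is essentially complete. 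Two smaller corrections: the characterization of $\Gamma_{E(3;2;1,2)}$-isometries is [Theorem 4.5, \cite{apal2}] (Theorem 4.4 there is the $\Gamma_{E(3;3;1,1,1)}$ statement), and the pairing relations $W_1=\tilde{W}_2^*W_3$, $W_2=\tilde{W}_1^*W_3$ are not produced by $(i)$--$(ii)$ at all: they follow from the fundamental equations \eqref{funda1}--\eqref{funda11} alone, e.g.\ the $(1,1)$ entry of $\tilde{W}_2^*W_3$ is $\tilde{S}_2^*S_3+D_{S_3}G_1D_{S_3}=S_1$.

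The genuine gap is in part $(1)$, at exactly the step you defer as ``the main obstacle.'' Any characterization of $\Gamma_{E(3;2;1,2)}$-isometries of the type you invoke requires, besides commutativity, $W_3$ being an isometry and the pairing relations, quantitative bounds --- spectral radius or norm --- on $W_1,\tilde{W}_2$ (at most $1$) and on $W_2,\tilde{W}_1$ (at most $2$); compare Step 3 of Example \ref{Exam 3}, where $\|W_1\|\leq 1$ is verified by hand before the characterization is applied. These bounds are not consequences of the algebraic relations: the tuple $(N,0,I,0,N^*)$ with $N$ normal of arbitrarily large norm commutes and satisfies every pairing relation with the isometry $W_3=I$. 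Your outline never establishes such bounds, and never even invokes the only quantitative hypothesis available, namely $\omega(G_1+z\tilde{G}_2)\leq 1$ and $\omega(G_2+z\tilde{G}_1)\leq 1$ from Theorem \ref{s1s2}$(5)$. The missing mechanism, which is the analytic heart of the Bhattacharyya-type proof, runs as follows: for fixed $z\in\mathbb{T}$, hypothesis $(i)$ (here $[G_1,\tilde{G}_2]=0$) and hypothesis $(ii)$ (here $[G_1,G_1^*]=[\tilde{G}_2,\tilde{G}_2^*]$) force $G_1+z\tilde{G}_2^*$ to be normal; the numerical radius bound for all $w\in\mathbb{T}$ gives $|\langle G_1h,h\rangle|+|\langle\tilde{G}_2h,h\rangle|\leq 1$ for every unit vector $h$, hence $\omega(G_1+z\tilde{G}_2^*)\leq 1$; and since norm, spectral radius and numerical radius coincide for normal operators, $\sup_{z\in\mathbb{T}}\|G_1+z\tilde{G}_2^*\|\leq 1$, which bounds the Toeplitz corner $T_{G_1+\tilde{G}_2^*z}$ and so, by block triangularity together with $r(S_1)\leq\|S_1\|\leq 1$, yields $r(W_1)\leq 1$ (similarly $r(\tilde{W}_2)\leq 1$ and $r(W_2),r(\tilde{W}_1)\leq 2$). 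Until this chain is supplied, the appeal to the characterization theorem does not close, and part $(1)$ remains unproved.
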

We will demonstrate the necessary conditions for the existence of $\Gamma_{E(3; 2; 1, 2)}$-isometric dilation. 

\begin{thm}\label{Necessary Conditions 2}
		Let $\textbf{S} = (S_1, S_2, S_3, \tilde{S}_1, \tilde{S}_2) $ be a $\Gamma_{E(3; 2; 1, 2)} $-contraction on a Hilbert space $\mathcal{H}$ and $G_1, 2G_2, 2\tilde{G}_1, \tilde{G}_2$ be the fundamental operators of $\textbf{S}$. Then each of the following conditions is necessary for $\textbf{S}$ to have a $\Gamma_{E(3; 2; 1, 2)} $-isometric dilation:
		\begin{enumerate}
			\item The tuple $(G_1, 2G_2, 2\tilde{G}_1, \tilde{G}_2)$ has a joint dilation to a  $4$-tuple of commuting subnormal operators \\$(H_1, 2H_2, 2\tilde{H}_1, \tilde{H}_2)$, that is, there exists an isometric embedding $\Lambda_0$ of $\mathcal{D}_{S_3}$ into a larger Hilbert space $\mathcal{F}$ so that $G_1 = \Lambda^*_0H_1\Lambda_0, G_2 = \Lambda^*_0H_2\Lambda_0, \tilde{G}_1 = \Lambda^*_0\tilde{H}_1\Lambda_0, \tilde{G}_2 = \Lambda^*_0\tilde{H}_2\Lambda_0$, where $(H_1, 2H_2, 2\tilde{H}_1, \tilde{H}_2)$ can be extended to  $4$-tuple of commuting normal operators $(M_1, 2M_2, 2\tilde{M}_1, \tilde{M}_2)$ with Taylor joint spectrum is contained in $\{(z_1, 2z_2, 2\tilde{z}_1, \tilde{z}_2) : |z_1| = |\tilde{z}_2| \leqslant 1, |z_2| = |\tilde{z}_1| \leqslant 1\}$.
			
			\item $(\tilde{G}^*_2D_{S_3}\tilde{S}_2 - G^*_1D_{S_3}S_1)|_{Ker D_{S_3}} = 0$.
			
			\item[$(2^{'})$] $(\tilde{G}^*_2G^*_1 - G^*_1\tilde{G}^*_2)D_{S_3}S_3|_{Ker D_{S_3}} = 0$.
			
			\item $(G^*_2D_{S_3}S_2 - \tilde{G}^*_1D_{S_3}\tilde{S}_1)|_{Ker D_{S_3}} = 0$.
			
			\item[$(3^{'})$] $(G^*_2\tilde{G}^*_1 - \tilde{G}^*_1G^*_2)D_{S_3}S_3|_{Ker D_{S_3}} = 0$.
			
			\item $(\tilde{G}^*_2D_{S_3}S_2 - 2\tilde{G}^*_1D_{S_3}S_1)|_{Ker D_{S_3}} = 0$.
			
			\item[$(4^{'})$] $(\tilde{G}^*_2\tilde{G}^*_1 - \tilde{G}^*_1\tilde{G}^*_2)D_{S_3}S_3|_{Ker D_{S_3}} = 0$.
			
			\item $(2G^*_2D_{S_3}\tilde{S}_2 - G^*_1D_{S_3}\tilde{S}_1)|_{Ker D_{S_3}} = 0$.
			
			\item[$(5^{'})$] $(G^*_2G^*_1 - G^*_1G^*_2)D_{S_3}S_3|_{Ker D_{S_3}} = 0$.
			
			\item $(\tilde{G}^*_2D_{S_3}\tilde{S}_1 - 2G^*_2D_{S_3}S_1)|_{Ker D_{S_3}} = 0$.
			
			\item[$(6^{'})$] $(\tilde{G}^*_2G^*_2 - G^*_2\tilde{G}^*_2)D_{S_3}S_3|_{Ker D_{S_3}} = 0$.
			
			\item $(2\tilde{G}^*_1D_{S_3}\tilde{S}_2 - G^*_1D_{S_3}S_2)|_{Ker D_{S_3}} = 0$.
			
			\item[$(7^{'})$] $(\tilde{G}^*_1G^*_1 - G^*_1\tilde{G}^*_1)D_{S_3}S_3|_{Ker D_{S_3}} = 0$.
		\end{enumerate}
	\end{thm}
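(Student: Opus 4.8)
The plan is to follow the proof of Theorem~\ref{Necessary Conditions 1} almost verbatim, replacing the single family $V_i=V_{7-i}^*V_7$ by the two tetrablock-isometry families coming from the triples $(S_1,\tilde S_2,S_3)$ and $(\tfrac{S_2}{2},\tfrac{\tilde S_1}{2},S_3)$ of Theorem~\ref{s1s2}, and by exhausting the six commutators among $W_1,W_2,\tilde W_1,\tilde W_2$. First I would suppose $\mathbf{W}=(W_1,W_2,W_3,\tilde W_1,\tilde W_2)$ is a $\Gamma_{E(3;2;1,2)}$-isometric dilation of $\mathbf{S}$; since $\Gamma_{E(3;2;1,2)}$ is polynomially convex, one may assume minimality and work with polynomials. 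With respect to $\mathcal{K}=\mathcal{H}\oplus(\mathcal{K}\ominus\mathcal{H})$ I would write each generator as a lower-triangular block matrix whose $(1,1)$-corner is the corresponding $S_i$, whose $(2,1)$-corners I denote $C_1,C_2,C_3,\tilde C_1,\tilde C_2$, and whose $(2,2)$-corners are $H_1,\,2H_2,\,H_3,\,2\tilde H_1,\,\tilde H_2$; the $(1,2)$-corners vanish because the dilation property $W_i^*|_{\mathcal H}=S_i^*$ makes $\mathcal{K}\ominus\mathcal{H}$ invariant for $\mathbf{W}$. As $W_3$ is an isometry, $W_3^*W_3=I$ gives $C_3^*C_3=D_{S_3}^2$ and $H_3^*H_3=I$, so there is an isometry $\Lambda_0\colon\mathcal{D}_{S_3}\to\mathcal{K}\ominus\mathcal{H}=:\mathcal{F}$ with $\Lambda_0D_{S_3}=C_3$.

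Next I would insert the $\Gamma_{E(3;2;1,2)}$-isometry identities $W_1=\tilde W_2^*W_3$, $\tilde W_2=W_1^*W_3$, $W_2=\tilde W_1^*W_3$, $\tilde W_1=W_2^*W_3$ (the tetrablock-isometry relations for the two triples above, cf.\ [Theorem~$4.4$, \cite{apal2}]). Reading off blocks gives, e.g.\ from the first two,
\[
S_1-\tilde S_2^*S_3=\tilde C_2^*C_3,\qquad C_1=\tilde H_2^*C_3,\qquad \tilde C_2=H_1^*C_3,
\]
and the other two yield $C_2=2\tilde H_1^*C_3$ and $\tilde C_1=2H_2^*C_3$. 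Combining $S_1-\tilde S_2^*S_3=\tilde C_2^*C_3=C_3^*H_1C_3=D_{S_3}\Lambda_0^*H_1\Lambda_0D_{S_3}$ with \eqref{funda1} and the uniqueness of the fundamental operators forces $G_1=\Lambda_0^*H_1\Lambda_0$, and the same computation gives $G_2=\Lambda_0^*H_2\Lambda_0$, $\tilde G_1=\Lambda_0^*\tilde H_1\Lambda_0$, $\tilde G_2=\Lambda_0^*\tilde H_2\Lambda_0$. Because $\mathcal{K}\ominus\mathcal{H}$ is $\mathbf{W}$-invariant, the corner tuple $(H_1,2H_2,H_3,2\tilde H_1,\tilde H_2)$ is a $\Gamma_{E(3;2;1,2)}$-contraction with $H_3$ an isometry, hence a $\Gamma_{E(3;2;1,2)}$-isometry; extending it to a $\Gamma_{E(3;2;1,2)}$-unitary $(M_1,2M_2,M_3,2\tilde M_1,\tilde M_2)$ and discarding the $S_3$-coordinate, the containment $\sigma\subseteq K_1$ (which forces $|x_1|=|y_2|\leq 1$ and $|x_2|=|y_1|\leq 1$) gives part~$(1)$.

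For the remaining conditions I would run each of the six commutators $[W_1,\tilde W_2]$, $[W_2,\tilde W_1]$, $[W_1,W_2]$, $[\tilde W_1,\tilde W_2]$, $[W_1,\tilde W_1]$, $[W_2,\tilde W_2]$ through the same two-step machine; each produces exactly one unprimed and one primed identity, matching the pairs $(2)/(2^{'})$, $(3)/(3^{'})$, $(4)/(4^{'})$, $(5)/(5^{'})$, $(6)/(6^{'})$, $(7)/(7^{'})$ respectively. Illustrating with $[W_1,\tilde W_2]=0$: equating $(2,1)$-blocks gives $C_1\tilde S_2-\tilde C_2S_1=\tilde H_2C_1-H_1\tilde C_2$; substituting $C_1=\tilde H_2^*C_3$, $\tilde C_2=H_1^*C_3$, $C_3=\Lambda_0D_{S_3}$, left-multiplying by $\Lambda_0^*$ and restricting to $Ker\,D_{S_3}$ yields $(2)$. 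Substituting instead the co-extension identities $D_{S_3}S_1=G_1D_{S_3}+\tilde G_2^*D_{S_3}S_3$ and $D_{S_3}\tilde S_2=\tilde G_2D_{S_3}+G_1^*D_{S_3}S_3$ (the $\Gamma_{E(3;2;1,2)}$-analogue of [Lemma~$2.7$, \cite{apal2}]) into the left-hand side, separating the $D_{S_3}$- and $D_{S_3}S_3$-terms, and restricting to $Ker\,D_{S_3}$ yields $(2^{'})$. The factors of $2$ throughout originate from the corners $2H_2,\,2\tilde H_1$ and from the normalizations $\tfrac{S_2}{2},\tfrac{\tilde S_1}{2}$ in \eqref{funda11}.

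I expect the only genuinely structural step to be part~$(1)$ — verifying that the corner tuple is a $\Gamma_{E(3;2;1,2)}$-isometry and translating the distinguished-boundary set $K_1$ into the stated union of tori — exactly as in Theorem~\ref{Necessary Conditions 1}. Everything else is routine but bookkeeping-heavy: the chief risk is simply matching each of the six commutators to its correct pair of conditions and keeping the interlocking factors of $2$ in their right places while applying the uniform ``multiply by $\Lambda_0^*$, then substitute the fundamental-equation identities'' pattern.
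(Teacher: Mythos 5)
Your proposal is correct and follows essentially the same route as the paper's own proof: the same $2\times 2$ lower-triangular block decomposition, the isometry $\Lambda_0$ with $\Lambda_0 D_{S_3}$ equal to the $(2,1)$-corner of $W_3$, the $\Gamma_{E(3;2;1,2)}$-isometry identities $W_1=\tilde{W}_2^*W_3$, $\tilde{W}_2=W_1^*W_3$, $W_2=\tilde{W}_1^*W_3$, $\tilde{W}_1=W_2^*W_3$ (the paper cites [Theorem $4.5$, \cite{apal2}] rather than Theorem $4.4$, a trivial difference), uniqueness of the fundamental operators, the corner-tuple/unitary-extension argument for part $(1)$, and the commutator-plus-substitution machine for the paired conditions. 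The only difference is expository: the paper works out just $[W_1,\tilde{W}_2]$ to obtain $(2)/(2^{\prime})$ and declares the other pairs analogous, whereas you spell out the full (and correct) correspondence of all six commutators with the six pairs of conditions.
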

	
	\begin{proof}
		Let  $\textbf{W} = (W_1, W_2, W_3, \tilde{W}_1, \tilde{W}_2)$ is a $\Gamma_{E(3; 2; 1, 2)}$-isometric dilation of $\textbf{S}.$ It is noteworthy that $\Gamma_{E(3; 2; 1, 2)}$ is a polynomially convex [Theorem $4.1$, \cite{Zapalowski}]. Therefore, it suffices to consider polynomials instead of the entire algebra $\mathcal O(\Gamma_{E(3; 2; 1, 2)}),$ and we can assume, without loss of generality, that \begin{equation*}
			\begin{aligned}
				\mathcal{K} &= \overline{\rm{span}}\{W^{n_1}_1W^{n_2}_2W^{n_3}_3\tilde{W}^{m_1}_1 \tilde{W}^{m_2}_2h : h \in \mathcal{H}, n_1, n_2, n_3, m_1, m_2 \in \mathbb{N} \cup \{0\}\}.
			\end{aligned}
		\end{equation*}
By definition, we have
		\begin{equation*}
			\begin{aligned}
				(W^*_1, W^*_2, W^*_3, \tilde{W}^*_1, \tilde{W}^*_2)|_{\mathcal{H}} &= (S^*_1, S^*_2, S^*_3, \tilde{S}^*_1, \tilde{S}^*_2).
			\end{aligned}
		\end{equation*}
Let the $2 \times 2$ block operator matrix of $W_i$'s for $1 \leqslant i \leqslant 3$ and $\tilde{W}_j$'s for $1 \leqslant j \leqslant 2$ be of the form
		\begin{equation}\label{M 1}
			\begin{aligned}
				W_1 &=
				\begin{pmatrix}
					S_1 & 0\\
					E_1 & H_1
				\end{pmatrix},
				W_2 =
				\begin{pmatrix}
					S_2 & 0\\
					E_2 & 2H_2
				\end{pmatrix},
				W_3 =
				\begin{pmatrix}
					S_3 & 0\\
					E_3 & H_3
				\end{pmatrix},\\
				&\hspace{0.5cm}
				\tilde{W}_1 =
				\begin{pmatrix}
					\tilde{S}_1 & 0\\
					\tilde{E}_1 & 2\tilde{H}_1
				\end{pmatrix} \,\, \text{and} \,\,
				\tilde{W}_2 =
				\begin{pmatrix}
					\tilde{S}_2 & 0\\
					\tilde{E}_2 & \tilde{H}_2
				\end{pmatrix}
			\end{aligned}
		\end{equation}
with respect to the decomposition $\mathcal{K} = \mathcal{H} \oplus (\mathcal{K} \ominus \mathcal{H})$ of $\mathcal{K}$. Since $W_3$ is an isometry, it follows from \eqref{M 1} that
		\begin{equation}\label{M 2}
			\begin{aligned}
				S^*_3S_3 + E^*_3E_3 &= I_{\mathcal{H}}, H^*_3H_3 = I_{\mathcal{K} \ominus \mathcal{H}}.
			\end{aligned}
		\end{equation}
It yields from \eqref{M 2} that there exists an isometry $\Lambda_0 : \mathcal{D}_{S_3} \to \mathcal{K} \ominus \mathcal{H}$ such that
		\begin{equation}\label{M 3}
			\begin{aligned}
				\Lambda_0 D_{S_3} = E_3.
			\end{aligned}
		\end{equation}
Since $\textbf{W} = (W_1, W_2, W_3, \tilde{W}_1, \tilde{W}_2)$ is a $\Gamma_{E(3; 2; 1, 2)}$-isometry, it implies from [Theorem $4.5$, \cite{apal2}] that
\begin{equation}\label{M111}
 W_1 = \tilde{W}^*_2W_3, \tilde{W}_2=W_1^*W_3,  W_2 = \tilde{W}^*_1W_3~{\rm{and}}~\tilde{W}_1=W_2^*W_3.
 \end{equation}
 We deduce from \eqref{M 1} and \eqref{M111} that 
 \begin{equation}\label{M 4}
			\begin{aligned}
				S_1 = \tilde{S}^*_2S_3 + \tilde{E}^*_2E_3, E_1 = \tilde{H}^*_2E_3, \tilde{E}^*_2H_3 = 0, H_1 = \tilde{H}^*_2H_3,
				\end{aligned}
				\end{equation}
			 \begin{equation}\label{M 41}
			\begin{aligned}
			\tilde{S}_2 = S^*_1S_3 + E^*_1E_3, \tilde{E}_2 = H^*_1E_3, E^*_1H_3 = 0, \tilde{H}_2 = H^*_1H_3,
			\end{aligned}
			\end{equation}
 \begin{equation}\label{M 42}
			\begin{aligned}
					S_2 = \tilde{S}^*_1S_3 + \tilde{E}^*_1E_3, E_2 = 2\tilde{H}^*_1E_3, \tilde{E}^*_1H_3 = 0, H_2 = \tilde{H}^*_1H_3
					\end{aligned}
			\end{equation}	$${\rm{and}}$$				
			 \begin{equation}\label{M 43}
			\begin{aligned}
			\tilde{S}_1 = S^*_2S_3 + E^*_2E_3, \tilde{E}_1 = 2H^*_2E_3, E^*_2H_3 = 0, \tilde{H}_1 = H^*_2H_3.
			\end{aligned}
		\end{equation}
It follows from \eqref{funda1},\eqref{funda11},\eqref{M 3},\eqref{M 4},\eqref{M 41},\eqref{M 42}, \eqref{M 43} and Theorem \ref{s1s2} that	\begin{equation}\label{M 6}
			\begin{aligned}
				D_{S_3}G_1D_{S_3} = S_1 - \tilde{S}^*_2S_3 = \tilde{E}^*_2E_3 = E^*_3H_1E_3 = D_{S_3}\Lambda^*_0H_1\Lambda_0 D_{S_3},
				\end{aligned}
		\end{equation}
\begin{equation}\label{M 61}
			\begin{aligned}
					D_{S_3}\tilde{G}_2D_{S_3} = \tilde{S}_2 - S^*_1S_3 = E^*_1E_3 = E^*_3\tilde{H}_2E_3 = D_{S_3}\Lambda^*_0\tilde{H}_2\Lambda_0 D_{S_3},
			\end{aligned}
		\end{equation}
		\begin{equation}\label{M 7}
			\begin{aligned}
				2D_{S_3}G_2D_{S_3} = S_2 - \tilde{S}^*_1S_3 = \tilde{E}^*_1E_3 = 2E^*_3H_2E_3 = 2D_{S_3}\Lambda^*_0H_2\Lambda_0 D_{S_3},
				\end{aligned}
		\end{equation}
				$${\rm{and}}$$
				\begin{equation}\label{M 71}
			\begin{aligned}
				2D_{S_3}\tilde{G}_1D_{S_3} = \tilde{S}_1 - S^*_2S_3 = E^*_2E_3 = 2E^*_3\tilde{H}_1E_3 = 2D_{S_3}\Lambda^*_0\tilde{H}_1\Lambda_0 D_{S_3}.
				\end{aligned}
		\end{equation}
		By uniqueness of the fundamental operators  $G_1, 2G_2, 2\tilde{G}_1, \tilde{G}_2$, we conclude from \eqref{M 6}, \eqref{M 61}, \eqref{M 7} and \eqref{M 71} that 
		\begin{equation}\label{M 8}
			\begin{aligned}
				G_1 &= \Lambda^*_0H_1\Lambda_0, G_2 = \Lambda^*_0H_2\Lambda_0, \tilde{G}_1 = \Lambda^*_0\tilde{H}_1\Lambda_0, \tilde{G}_2 = \Lambda^*_0\tilde{H}_2\Lambda_0.
			\end{aligned}
		\end{equation}
From \eqref{M 2}, \eqref{M 4} and \eqref{M 42}, it is evident
\begin{equation}\label{M 9}
			\begin{aligned}
				H^*_3H_3 = I_{\mathcal{K} \ominus \mathcal{H}}, H_1 = \tilde{H}^*_2H_3, \,\, \text{and} \,\, H_2 = \tilde{H}^*_1H_3.
			\end{aligned}
			\end{equation}
As $\textbf{W}=(W_1, W_2, W_3, \tilde{W}_1, \tilde{W}_2)$ is a $\Gamma_{E(3; 2; 1, 2)}$-isometry and $(W_1, W_2, W_3, \tilde{W}_1, \tilde{W}_2)|_{\mathcal{K} \ominus \mathcal{H}} = (H_1, 2H_2, H_3, 2\tilde{H}_1, \tilde{H}_2)$, it indicates that $(H_1, 2H_2, H_3, 2\tilde{H}_1, \tilde{H}_2)$ is a $\Gamma_{E(3; 2; 1, 2)} $-contraction. As $\textbf{H} = (H_1, 2H_2, H_3, 2\tilde{H}_1, \tilde{H}_2)$ is a $\Gamma_{E(3; 2; 1, 2)}$-contraction, we conclude from \eqref{M 9} that $\textbf{H} = (H_1, 2H_2, H_3, 2\tilde{H}_1, \tilde{H}_2)$ is also a $\Gamma_{E(3; 2; 1, 2)} $-isometry, and so by definition of $\Gamma_{E(3; 2; 1, 2)}$-isometry, $\textbf{H}$ has a $\Gamma_{E(3; 2; 1, 2)}$-unitary extension $\textbf{M} = (M_1, 2M_2, M_3, 2\tilde{M}_1, \tilde{M}_2)$ on a larger Hilbert space. Since $\textbf{M} = (M_1, 2M_2, M_3, 2\tilde{M}_1, \tilde{M}_2)$ is a $\Gamma_{E(3; 2; 1, 2)}$-unitary, it follows from the definition of $\Gamma_{E(3; 2; 1, 2)}$-unitary that the Taylor joint spectrum $\sigma(\textbf{M})$ of $\textbf{M}$ is contained in $K_1$ and $M_1, M_2, M_3, \tilde{M}_1, \tilde{M}_2$ are commuting normal operators. By ignoring the third co-ordinate, we conclude that $\sigma(M_1, 2M_2, 2\tilde{M}_1, \tilde{M}_2)$ is contained in $\{(z_1, 2z_2, 2\tilde{z}_1, \tilde{z}_2) : |z_1| = |\tilde{z}_2| \leqslant 1, |z_2| = |\tilde{z}_1| \leqslant 1\}, $ and part $(1)$ follows.
		
We demonstrate only conditions $(2)$ and $(2^{\prime})$, as the conditions $(3),(3^{\prime}),(4),(4^{\prime}),(5),(5^{\prime}),(6),(6^{\prime}),(7)$ and $(7^{\prime})$ are satisfied in a similar manner.  As $W_1\tilde{W}_2=\tilde{W}_2W_1$, it follows from \eqref{M 1} that 
		\begin{equation}\label{M 10}
			\begin{aligned}
				E_1\tilde{S}_2 +H_1\tilde{E}_2 &=
				\tilde{H}_2E_1 +\tilde{E}_2S_1.
			\end{aligned}
		\end{equation}
It implies from \eqref{M 4}, \eqref{M 41} and \eqref{M 10} that 		
\begin{equation}\label{M 111}
			\begin{aligned}
				E_1\tilde{S}_2 +H_1\tilde{E}_2 &=
				\tilde{H}_2E_1 +\tilde{E}_2S_1.
			\end{aligned}
		\end{equation}		
From \eqref{M 4},\eqref{M 41} and \eqref{M 3},  we see that 
\begin{equation}\label{M 121}
			\begin{aligned}
				E_1\tilde{S}_2 -\tilde{E}_2S_1 &=\tilde{H}_2^*E_3\tilde{S}_2-H_1^*E_3S_1\\&=\tilde{H}_2^*\Lambda_0 D_{S_3}\tilde{S}_2-H_1^*\Lambda_0 D_{S_3}S_1.
				\end{aligned}
		\end{equation}		
Also, it yields from \eqref{M 4}, \eqref{M 41} and \eqref{M 3} that	
\begin{equation}\label{M 1211}
			\begin{aligned}
				\tilde{H}_2E_1 -H_1\tilde{E}_2 &=\tilde{H}_2\tilde{H}_2^*E_3-H_1H_1^*E_3\\&=(\tilde{H}_2\tilde{H}_2^*-H_1H_1^*)\Lambda_0 D_{S_3}S_1.
				\end{aligned}
		\end{equation}		
From \eqref{M 111}, \eqref{M 121} and \eqref{M 1211}, we have 		
\begin{equation}\label{M 211}
			\begin{aligned}
				\tilde{H}^*_2\Lambda_0 D_{S_3}\tilde{S}_2 - H^*_1\Lambda_0 D_{S_3}S_1 &= (\tilde{H}_2\tilde{H}^*_2 - H_1H^*_1)\Lambda_0 D_{S_3}.
			\end{aligned}
			\end{equation}
By multiplying left side  of \eqref{M 211}  by $\Lambda^*_0$ and by using \eqref{M 8},  we deduce that
		\begin{equation}\label{M 311}
			\begin{aligned}
			\tilde{G}^*_2 D_{S_3}\tilde{S}_2 - G^*_1 D_{S_3}S_1&=	\Lambda^*_0\tilde{H}^*_2\Lambda_0 D_{S_3}\tilde{S}_2 - \Lambda^*_0H^*_1\Lambda_0 D_{S_3}S_1 \\&= \Lambda^*_0(\tilde{H}_2\tilde{H}^*_2 - H_1H^*_1)\Lambda_0 D_{S_3}.
			\end{aligned}
		\end{equation}
	Therefore, from \eqref{M 311}, we  conclude that
		\begin{equation*}
			\begin{aligned}
				(\tilde{G}^*_2 D_{S_3}\tilde{S}_2 - G^*_1 D_{S_3}S_1)|_{Ker D_{S_3}} &= 0,
			\end{aligned}
		\end{equation*}
		part $(2)$ follows.

 Observe that
		\begin{equation}\label{M 13}
			\begin{aligned}
				\Lambda^*_0(\tilde{H}_2\tilde{H}^*_2 - H_1H^*_1)\Lambda_0 D_{S_3} &=
				\tilde{G}^*_2 D_{S_3}\tilde{S}_2 - G^*_1 D_{S_3}S_1\\
				&= \tilde{G}^*_2(\tilde{G}_2D_{S_3} + G^*_1D_{S_3}S_3) - G^*_1(G_1D_{S_3} + \tilde{G}^*_2D_{S_3}S_3)\\
				&= (\tilde{G}^*_2\tilde{G}_2 - G^*_1G_1)D_{S_3} + (\tilde{G}^*_2G^*_1 - G^*_1\tilde{G}^*_2)D_{S_3}S_3.
			\end{aligned}
		\end{equation}
It follows from \eqref{M 13} that
		\[(\tilde{G}^*_2G^*_1 - G^*_1\tilde{G}^*_2)D_{S_3}S_3|_{Ker D_{S_3}} = 0,\]
		part $(2^{\prime})$ follows.
		
From above observations, we also conclude that $(2) \Leftrightarrow (2^{'}).$ Similarly, we can show that $(3) \Leftrightarrow (3^{'}), (4) \Leftrightarrow (4^{'}), (5) \Leftrightarrow (5^{'}), (6) \Leftrightarrow (6^{'}), (7) \Leftrightarrow (7^{'}).$ This completes the proof.
	\end{proof}
We discuss a class of $\Gamma_{E(3; 2; 1, 2)}$-contractions that are always dilate, specifically those of the form $\textbf{S} = (S_1,  S_1S_2+S_1^2S_2,  S_1^2S_2^2, S_2+S_1S_2,S_1S_2^2),$ where $(S_1,S_2)$ is  a pair of contractions.
\begin{thm}\label{Thm Exam 2}
		Let $(S_1, S_2)$ be a pair of commuting contractions on a Hilbert space $\mathcal{H}$. Then the $5$-tuple of operators $\textbf{S} = (S_1,  S_1S_2+S_1^2S_2,  S_1^2S_2^2, S_2+S_1S_2,S_1S_2^2)$ is a $\Gamma_{E(3; 2; 1, 2)}$-contraction.	\end{thm}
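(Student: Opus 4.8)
The plan is to proceed exactly as in the proof of Theorem \ref{Thm Example 1}, pushing the five-tuple $\textbf{S}$ through a polynomial map out of the bidisc and then invoking Ando's inequality. First I would introduce the polynomial map $\pi : \mathbb{C}^2 \to \mathbb{C}^5$ by
\[
\pi(x,y) = (x,\ xy + x^2y,\ x^2y^2,\ y + xy,\ xy^2),
\]
and observe that, since $S_1$ and $S_2$ commute, the five entries of $\textbf{S}$ are precisely $\pi(S_1,S_2)$; consequently $p(\textbf{S}) = (p \circ \pi)(S_1,S_2)$ for every polynomial $p$ in five variables, and $p\circ\pi$ is itself a polynomial in $(x,y)$.

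The crucial geometric step is to show $\pi(\overline{\mathbb{D}}^2) \subseteq \Gamma_{E(3; 2; 1, 2)}$. For $(x,y) \in \overline{\mathbb{D}}^2$ I would take the diagonal witness matrix $A = \operatorname{diag}(x,y,xy)$, exactly the matrix used in Theorem \ref{Thm Example 1}. Reading off the five defining coordinates of $\Gamma_{E(3; 2; 1, 2)}$ for this diagonal $A$ gives $x_1 = a_{11} = x$, $x_2 = a_{11}a_{22} + a_{11}a_{33} = xy + x^2 y$, $x_3 = \det A = x^2 y^2$, $y_1 = a_{22}+a_{33} = y + xy$, and $y_2 = a_{22}a_{33} = xy^2$, so $A$ realizes $\pi(x,y)$. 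It then remains to check $\mu_{E(3; 2; 1, 2)}(A) \leq 1$. Since every admissible test matrix $X = \operatorname{diag}(z_1,z_2,z_2) \in E(3;2;1,2)$ is diagonal, the determinant factors as $\det(I - AX) = (1-xz_1)(1-yz_2)(1-xyz_2)$, which vanishes precisely when one of $xz_1,\ yz_2,\ xyz_2$ equals $1$. Minimizing $\|X\| = \max(|z_1|,|z_2|)$ over these possibilities gives $\inf\|X\| = 1/\max(|x|,|y|)$ (using $|xy|\le\max(|x|,|y|)$, with the degenerate cases $x=0$ or $y=0$ handled directly), whence $\mu_{E(3; 2; 1, 2)}(A) = \max(|x|,|y|) \leq 1$. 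By the very definition of $\Gamma_{E(3; 2; 1, 2)}$, the point $\pi(x,y)$ therefore lies in $\Gamma_{E(3; 2; 1, 2)}$.

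With the inclusion $\pi(\overline{\mathbb{D}}^2) \subseteq \Gamma_{E(3; 2; 1, 2)}$ in hand, I would finish exactly as before: for any polynomial $p$,
\[
\|p(\textbf{S})\| = \|(p\circ\pi)(S_1,S_2)\| \leq \|p\circ\pi\|_{\infty,\overline{\mathbb{D}}^2} = \|p\|_{\infty,\pi(\overline{\mathbb{D}}^2)} \leq \|p\|_{\infty,\Gamma_{E(3; 2; 1, 2)}},
\]
where the first inequality is Ando's von Neumann inequality applied to the commuting contractions $(S_1,S_2)$, the middle equality is the definition of the sup-norm over an image, and the last inequality uses the inclusion just established. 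This shows $\Gamma_{E(3; 2; 1, 2)}$ is a spectral set for $\textbf{S}$, i.e.\ that $\textbf{S}$ is a $\Gamma_{E(3; 2; 1, 2)}$-contraction.

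The only genuine content is the middle paragraph: identifying the correct witness matrix (which, pleasantly, turns out to be the same $\operatorname{diag}(x,y,xy)$ as in the $\Gamma_{E(3; 3; 1, 1, 1)}$ case) and evaluating $\mu_{E(3; 2; 1, 2)}$ on it. The main obstacle, such as it is, is this $\mu$-computation; but because both $A$ and the admissible perturbations $X$ are diagonal, the determinant factors completely and the structured singular value collapses to the elementary quantity $\max(|x|,|y|)$, so no real difficulty arises.
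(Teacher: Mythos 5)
Your proof is correct, and its overall skeleton (a polynomial map out of the bidisc followed by Ando's von Neumann inequality) is the same as the paper's; where you genuinely differ is in how the key inclusion $\pi(\overline{\mathbb{D}}^2) \subseteq \Gamma_{E(3;2;1,2)}$ is established. The paper never verifies membership in $\Gamma_{E(3;2;1,2)}$ directly: it factors your map as $\pi_1\circ\pi$, where $\pi(x,y)=(x,y,xy,xy,x^2y,xy^2,x^2y^2)$ is the map of Theorem \ref{Thm Example 1} and $\pi_{\eta}(x_1,\dots,x_7)=(x_1,\,x_3+\eta x_5,\,\eta x_7,\,x_2+\eta x_4,\,\eta x_6)$, and then invokes the cited equivalence [Theorem $2.48$, \cite{apal1}] that $(x_1,\dots,x_7)\in\Gamma_{E(3;3;1,1,1)}$ if and only if $\pi_{\eta}(x_1,\dots,x_7)\in\Gamma_{E(3;2;1,2)}$ for all $\eta\in\overline{\mathbb{D}}$, specialized to $\eta=1$. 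You instead argue straight from the definition of the domain: you exhibit the diagonal witness $A=\operatorname{diag}(x,y,xy)$, check that its five symmetrization coordinates are exactly $\pi(x,y)$, and compute $\mu_{E(3;2;1,2)}(A)=\max\bigl(|x|,|y|,|xy|\bigr)=\max\bigl(|x|,|y|\bigr)\leqslant 1$ by factoring $\det(I-AX)$ over the diagonal test matrices $X=\operatorname{diag}(z_1,z_2,z_2)\in E(3;2;1,2)$, which is a correct use of \eqref{mu} and \eqref{ls}, including your handling of the degenerate cases. Your route is more self-contained, requiring neither Theorem \ref{Thm Example 1} nor the external correspondence between the two domains, at the cost of the (elementary) structured-singular-value computation; the paper's route is shorter given its already-established machinery and makes transparent that the $\Gamma_{E(3;2;1,2)}$ statement is an image of the $\Gamma_{E(3;3;1,1,1)}$ one under $\pi_1$. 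The two arguments are fully compatible, since the paper's composite map realizes the same witness matrix implicitly.
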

	
	\begin{proof}
Observe that a point $(x_1, \dots, x_7) \in \Gamma_{E(3; 3; 1, 1, 1)}$ if and only if $(x_1, x_3 + \eta x_5, \eta x_7, x_2 + \eta x_4, \eta x_6) \in \Gamma_{E(3; 2; 1, 2)}$ for all $\eta \in \overline{\mathbb{D}}$ [Theorem $2.48$, \cite{apal1}]. For $\eta \in \overline{\mathbb{D}}$, we define the map $\pi_{\eta} : \mathbb{C}^7 \to \mathbb{C}^5$ by
		\begin{equation*}
			\begin{aligned}
				\pi_{\eta}(x_1, \dots, x_7)
				&= (x_1, x_3 + \eta x_5, \eta x_7, x_2 + \eta x_4, \eta x_6).
			\end{aligned}
		\end{equation*}
It is important to note from Theorerm \ref{Thm Example 1} that  $\pi(\overline{\mathbb{D}}^2) \subseteq \Gamma_{E(3; 3; 1, 1,1)}$.  Hence we have  $\pi_{\eta}\circ\pi(\overline{\mathbb{D}}^2) \subseteq \Gamma_{E(3; 2; 1, 2)}$. In particular, for $\eta =1$, we have $\pi_{1}\circ\pi(\overline{\mathbb{D}}^2) \subseteq \Gamma_{E(3; 2; 1, 2)}$. Let $p$ be any polynomial in  $\mathbb{C}[z_1, z_2,\dots,z_5].$ Then $p \circ \pi_1 \circ \pi$ is a polynomial on $\overline{\mathbb D^2}$ and we deduce that
		\begin{equation*}
			\begin{aligned}
				||p(\textbf{S})|| &= ||p \circ \pi_1\circ \pi(S_1, S_2)||\\
				&\leqslant ||p \circ \pi_1 \circ \pi||_{\infty, \overline{\mathbb{D}}^2}\\
				&= ||p||_{\infty, \pi_1\circ \pi(\overline{\mathbb{D}}^2)}\\
				&\leqslant ||p||_{\infty, \Gamma_{E(3; 2; 1, 2)}}.
			\end{aligned}
		\end{equation*}
This shows that $\textbf{S}$ is a $\Gamma_{E(3; 2; 1, 2)}$-contraction. This completes the proof.
\end{proof}
\begin{thm}
Let $(S_1, S_2)$ be a pair of commuting contractions on a Hilbert space $\mathcal{H}$. Then the $5$-tuple of operators $\textbf{S} = (S_1,  S_1S_2+S_1^2S_2,  S_1^2S_2^2, S_2+S_1S_2,S_1S_2^2)$ always has $\Gamma_{E(3; 2; 1, 2)}$-dilation.
\end{thm}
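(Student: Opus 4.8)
The plan is to follow the proof of Theorem \ref{contr} verbatim, replacing the coordinate map $\pi$ by the composition $\pi_1\circ\pi$ used in Theorem \ref{Thm Exam 2}. Let $(V_1,V_2)$ be an Ando isometric dilation of the commuting contractions $(S_1,S_2)$ on a Hilbert space $\mathcal K\supseteq\mathcal H$, so that $V_1,V_2$ are commuting isometries, $\mathcal H$ is invariant for $V_1^*,V_2^*$, and $V_i^*|_{\mathcal H}=S_i^*$. Define the commuting $5$-tuple
\[
\textbf{W}=\pi_1\circ\pi(V_1,V_2)=(V_1,\,V_1V_2+V_1^2V_2,\,V_1^2V_2^2,\,V_2+V_1V_2,\,V_1V_2^2),
\]
and write its entries as $(W_1,W_2,W_3,\tilde W_1,\tilde W_2)$. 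Since each entry is a polynomial in $V_1,V_2$ with nonnegative exponents, each adjoint $W_j^*$ (respectively $\tilde W_j^*$) is a sum of words in $V_1^*,V_2^*$; these preserve $\mathcal H$ and restrict there to the corresponding words in $S_1^*,S_2^*$. A direct bookkeeping then gives $W_j^*|_{\mathcal H}$ equal to the adjoint of the corresponding entry of $\textbf{S}$, so $\textbf{W}$ is a co-extension of $\textbf{S}$. It remains only to show that $\textbf{W}$ is a $\Gamma_{E(3;2;1,2)}$-isometry.

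This last point is the crux. I would verify the hypotheses of the characterization of $\Gamma_{E(3;2;1,2)}$-isometries [Theorem $4.5$, \cite{apal2}], namely that $W_3$ is an isometry and that
\[
W_1=\tilde W_2^*W_3,\quad \tilde W_2=W_1^*W_3,\quad W_2=\tilde W_1^*W_3,\quad \tilde W_1=W_2^*W_3 .
\]
Because $V_1,V_2$ are commuting isometries, $W_3=(V_1V_2)^2$ is a product of commuting isometries, hence an isometry. Each of the four identities then reduces to moving a factor past $V_2^{2}$ (or $V_1^{2}$) by ordinary commutativity and collapsing an adjoint-times-power by $V_i^*V_i=I$; for instance
\[
\tilde W_2^*W_3=V_2^{*2}\bigl(V_1^*V_1^2\bigr)V_2^2=V_2^{*2}V_1V_2^2=V_2^{*2}V_2^2V_1=V_1=W_1,
\]
and the remaining three are the same bookkeeping. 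Note that only the commutation $V_1V_2=V_2V_1$ and the isometry relations are used; double commutativity of $V_1,V_2$ with each other's adjoints is not needed. Granting the characterization, $\textbf{W}$ is a $\Gamma_{E(3;2;1,2)}$-isometry and hence a $\Gamma_{E(3;2;1,2)}$-isometric dilation of $\textbf{S}$.

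A conceptually cleaner route, which I would fall back on if the algebraic characterization proved delicate, is to extend $(V_1,V_2)$ further to a pair of commuting unitaries $(U_1,U_2)$ on a larger space (every commuting family of isometries extends to commuting unitaries) and to set $\textbf{M}=\pi_1\circ\pi(U_1,U_2)$. Then $\textbf{M}$ is a commuting $5$-tuple of normal operators, and by the spectral mapping theorem for the Taylor joint spectrum together with $\sigma(U_1,U_2)\subseteq\mathbb T^2$ one gets $\sigma(\textbf{M})\subseteq\pi_1\circ\pi(\mathbb T^2)$. A short computation shows $\pi_1\circ\pi(\mathbb T^2)\subseteq K_1$: for $(s,t)\in\mathbb T^2$ the point $(s,st+s^2t,s^2t^2,t+st,st^2)$ lies in $\Gamma_{E(3;2;1,2)}$ by Theorem \ref{Thm Exam 2}, has third coordinate of modulus $1$, and satisfies $s=\overline{st^2}\,(s^2t^2)$ and $st+s^2t=\overline{(t+st)}\,(s^2t^2)$. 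Hence $\textbf{M}$ is a $\Gamma_{E(3;2;1,2)}$-unitary, and $\textbf{W}=\pi_1\circ\pi(V_1,V_2)$ is its restriction to the $(U_1,U_2)$-invariant subspace $\mathcal K$, so $\textbf{W}$ is a $\Gamma_{E(3;2;1,2)}$-isometry by definition. In both routes the only genuine obstacle is establishing the $\Gamma_{E(3;2;1,2)}$-isometry property; the second route isolates it in the elementary membership $\pi_1\circ\pi(\mathbb T^2)\subseteq K_1$, which is the step I would write out with care.
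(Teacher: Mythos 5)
Your primary argument is essentially the paper's own proof: take an Ando isometric co-extension $(V_1,V_2)$ of $(S_1,S_2)$, form the $5$-tuple $\pi_1\circ\pi(V_1,V_2)=(V_1,\,V_1V_2+V_1^2V_2,\,V_1^2V_2^2,\,V_2+V_1V_2,\,V_1V_2^2)$, and invoke [Theorem $4.5$, \cite{apal2}] to conclude it is a $\Gamma_{E(3;2;1,2)}$-isometry lifting $\textbf{S}$ --- you simply write out the commutativity, isometry and product-identity verifications that the paper leaves as ``easy to see,'' and these check out. The unitary-extension fallback you sketch is also sound, but it is not needed and is not the route the paper takes.
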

\begin{proof}
Let $(V_1,V_2)$ be an Ando isometric dilation of $(S_1, S_2)$. Then it is easy to see that  $(V_1,  V_1V_2+V_1^2V_2,  V_1^2V_2^2, V_2+V_1V_2,V_1V_2^2)$ is a $5$-tuple of commuting isometic lift of  $\textbf{S} = (S_1,  S_1S_2+S_1^2S_2,  S_1^2S_2^2, S_2+S_1S_2,S_1S_2^2)$. It follows from [Theorem $4.5$, \cite{apal2}] that $(V_1,  V_1V_2+V_1^2V_2,  V_1^2V_2^2, V_2+V_1V_2,V_1V_2^2)$ is a  $\Gamma_{E(3; 2; 1, 2)}$-isometry. This completes the proof.
\end{proof}

	\section{$\bar{\mathcal{P}}$-Contraction and Their Isometric Dilation: Necessary Conditions}\label{Sec 3}
	
	In this section, we establish a necessary condition for the existence of a $\mathcal{\bar{P}}$-isometric dilation. The following theorem from \cite{Roy} guarantees the existence and uniqueness of the fundamental operator for a $\Gamma_{E(2;1;2)}$-contraction.
\begin{thm}[Theorem 4.2, \cite{Roy}]\label{FO of Gamma}
		Let $(T_1, T_2)$ be a $\Gamma_{E(2;1;2)}$-contraction. Then there exists a unique solution $X$ to the fundamental
		equation $T_1 - T^*_1T_2 = D_{T_2}XD_{T_2}.$ Furthermore, the numerical radius of $X$ is less than or equal to one.
	\end{thm}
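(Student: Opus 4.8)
The plan is to establish uniqueness by a density argument and then existence together with the numerical-radius bound by reducing everything to one family of operator inequalities and a sandwich-factorization lemma. Throughout set $A = T_1 - T_1^*T_2$ and $D = D_{T_2} = (I - T_2^*T_2)^{1/2}$; since $T_2$ is a contraction, $D$ is a well-defined positive operator with $\mathcal{D}_{T_2} = \overline{\Ran}\, D$. Uniqueness is immediate: if $DX_1D = DX_2D$ with $X_1, X_2 \in \mathcal{B}(\mathcal{D}_{T_2})$, then $\langle (X_1 - X_2)Dh, Dk\rangle = 0$ for all $h, k \in \mathcal{H}$, and since the vectors $Dh$ are dense in $\mathcal{D}_{T_2}$ this forces $X_1 = X_2$. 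So a solution, if it exists, is unique.

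For existence I would first extract the governing inequality from the hypothesis. The symmetrized bidisc $\Gamma_{E(2;1;2)}$ is invariant under the automorphisms $(s,p) \mapsto (\beta s, \beta^2 p)$ with $\beta \in \mathbb{T}$, so $(\beta T_1, \beta^2 T_2)$ is again a $\Gamma_{E(2;1;2)}$-contraction; the easy half of the operator characterization of $\Gamma$-contractions then gives $\rho_{G_{E(2;1;2)}}(\beta T_1, \beta^2 T_2) \geq 0$ for every $\beta \in \mathbb{T}$. Expanding the definition of $\rho_{G_{E(2;1;2)}}$ and using $|\beta| = 1$ together with $A^* = T_1^* - T_2^*T_1$, one finds
\[
\rho_{G_{E(2;1;2)}}(\beta T_1, \beta^2 T_2) = 2D^2 - 2\operatorname{Re}(\beta A),
\]
so that $\operatorname{Re}(\beta A) \leq D^2$ for all $\beta \in \mathbb{T}$. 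Taking $\beta = \pm 1, \pm i$ and writing $A = A_1 + iA_2$ with $A_1 = \tfrac{1}{2}(A + A^*)$ and $A_2 = \tfrac{1}{2i}(A - A^*)$ self-adjoint, this specializes to $-D^2 \leq A_j \leq D^2$ for $j = 1, 2$.

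The core step is a factorization lemma: any self-adjoint $B$ with $-D^2 \leq B \leq D^2$ can be written $B = DRD$ for a self-adjoint contraction $R$ on $\mathcal{D}_{T_2}$. I would construct $R$ from the sesquilinear form $(Dh, Dk) \mapsto \langle Bh, k\rangle$. Well-definedness rests on the implication $Dg = 0 \Rightarrow Bg = 0$: from $\langle D^2 g, g\rangle = 0$ and $-D^2 \leq B \leq D^2$ one gets $\langle (D^2 \mp B)g, g\rangle = 0$, and Cauchy--Schwarz for the positive forms $D^2 \mp B$ forces $(D^2 \mp B)g = 0$, hence $Bg = 0$. Boundedness is the estimate $|\langle Bh, k\rangle| \leq \|Dh\|\,\|Dk\|$, again a consequence of the generalized Cauchy--Schwarz inequality applied to $-D^2 \leq B \leq D^2$. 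The Riesz representation then yields $R = R^*$ with $\|R\| \leq 1$ and $\langle RDh, Dk\rangle = \langle Bh, k\rangle$, that is, $B = DRD$. Applying this to $A_1$ and $A_2$ gives self-adjoint contractions $R_1, R_2$, and $X := R_1 + iR_2 \in \mathcal{B}(\mathcal{D}_{T_2})$ satisfies $A = DXD$, which is the fundamental equation.

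For the numerical-radius bound I would substitute $A = DXD$ back into $\operatorname{Re}(\beta A) \leq D^2$ to obtain $D\,\operatorname{Re}(\beta X)\,D \leq D^2$, so $\langle \operatorname{Re}(\beta X)Dh, Dh\rangle \leq \|Dh\|^2$ for all $h \in \mathcal{H}$; density of $\Ran D$ in $\mathcal{D}_{T_2}$ upgrades this to $\operatorname{Re}(\beta X) \leq I_{\mathcal{D}_{T_2}}$ for every $\beta \in \mathbb{T}$, which is exactly the assertion $\omega(X) \leq 1$. I expect the main obstacle to be the factorization lemma of the preceding paragraph, specifically verifying well-definedness of the form and the sharp bound $\|Dh\|\,\|Dk\|$ through generalized Cauchy--Schwarz; once that is in hand, the remainder is routine bookkeeping around the single inequality $\operatorname{Re}(\beta A) \leq D^2$.
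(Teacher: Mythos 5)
Your proposal is correct, and it is essentially the standard proof of this result from the cited source \cite{Roy}; the present paper only quotes the theorem (as Theorem 4.2 of \cite{Roy}) and gives no proof of its own, so the comparison is with that reference, where the argument is the same. Both proofs proceed by extracting $\operatorname{Re}\big(\beta(T_1 - T_1^*T_2)\big) \leq D_{T_2}^2$ for all $\beta \in \mathbb{T}$ from the Agler--Young operator characterization of $\Gamma$-contractions, factoring the Cartesian parts of $T_1 - T_1^*T_2$ through the sandwich lemma $-D^2 \leq B \leq D^2 \Rightarrow B = DRD$ (proved exactly by your generalized Cauchy--Schwarz estimate $|\langle Bh,k\rangle| \leq \|Dh\|\,\|Dk\|$), setting $X = R_1 + iR_2$, and then recovering $\omega(X) \leq 1$ by substituting $T_1 - T_1^*T_2 = D_{T_2}XD_{T_2}$ back into the inequality and using density of $\operatorname{Ran} D_{T_2}$ in $\mathcal{D}_{T_2}$, with uniqueness following from the same density.
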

%	\begin{prop}\label{Pentablock 1}
%		Let $\textbf{P} = (P_1, P_2, P_3)$ and $\tilde{\textbf{P}} = (\tilde{P}_1, \tilde{P}_2, \tilde{P}_3)$ be two$\mathcal{\bar{P}}$-contractions and $X, Y$ are the fundamental operators of $(P_2, P_3)$ and $(\tilde{P}_2, \tilde{P}_3)$ respectively. If $\textbf{P}$ and $\tilde{\textbf{P}}$ are unitarily equivalent then $P_1, \tilde{P}_1$ are unitarily equivalent, and $X, Y$ are unitarily equivalent.
%	\end{prop}
%	
%	\begin{proof}
%		Suppose $\textbf{P} = (P_1, P_2, P_3)$ and $\tilde{\textbf{P}} = (\tilde{P}_1, \tilde{P}_2, \tilde{P}_3)$ are unitarily equivalent$\mathcal{\bar{P}}$-contractions. Then there exists a unitary $U$ such that $UP_i = \tilde{P}_iU$ for $1 \leqslant i \leqslant 3$. By the characterization of$\mathcal{\bar{P}}$-contractions we have $(P_2, P_3)$ and $(\tilde{P}_2, \tilde{P}_3)$ are $\Gamma$-contractions. And so $(P_2, P_3)$ and $(\tilde{P}_2, \tilde{P}_3)$ are unitarily equivalent. Thus by Proposition 4.2 of \cite{T. Bhattacharyya} we conclude that the fundamental operators $X, Y$ of $(P_2, P_3)$ and $(\tilde{P}_2, \tilde{P}_3)$ respectively are also unitarily equivalent. This completes the proof.
%	\end{proof}
We now define the $\mathcal{\bar{P}}$-isometric dilation of a $\mathcal{\bar{P}}$-contraction $(P_1, P_2, P_3)$.
	\begin{defn}\label{Defn 3}
A commuting triple of bounded operators $(R_1, R_2, R_3)$ on a Hilbert space $\mathcal{K}$ containing $\mathcal{H}$ is called a $\mathcal{\bar{P}}$-isometric dilation of a $\mathcal{\bar{P}}$-contraction $(P_1, P_2, P_3)$ on the Hilbert space $\mathcal{H}$ if		\begin{itemize}
			\item $(R_1, R_2, R_3)$ is a $\mathcal{\bar{P}}$-isometry;
			
			\item $R^*_i|_{\mathcal{H}} = P^*_i$ for $1 \leqslant i \leqslant 3$.
		\end{itemize}
	\end{defn}
It yields from the aforementioned definition that $(R_1, R_2, R_3)$ is a $\mathcal{\bar{P}}$-isometric dilation of a $\mathcal{\bar{P}}$-contraction $(P_1, P_2, P_3)$ is equivalent to saying that $(R^*_1, R^*_2, R^*_3)$ is a $\mathcal{\bar{P}}$-co-isometric extension of $(P^*_1, P^*_2, P^*_3)$. Furthermore, if
	\[\mathcal{K} = \overline{\Span}\{R^{n_1}_1R^{n_2}_2R^{n_3}_3h : h \in \mathcal{H}, n_1, n_2, n_3 \in \mathbb{N} \cup \{0\}\}\]
	then we call it the \textit{minimal $\mathcal{\bar{P}}$-isometric dilation}.
We will now demonstrate the necessary condition for a $\mathcal{\bar{P}}$-isometric dilation.
	\begin{thm}\label{Necessary Conditions 3}
		Let $(P_1, P_2, P_3)$ be a $\mathcal{\bar{P}}$-contraction on a Hilbert space $\mathcal{H}$ and $X \in \mathcal{B}(\mathcal{D}_{P_3})$ be the fundamental operator of  $(P_1, P_2, P_3)$. Then each of the following conditions are necessary for $(P_1, P_2, P_3)$ to have a $\mathcal{\bar{P}}$-isometric dilation of $(P_1, P_2, P_3)$:
		\begin{enumerate}
			\item The fundamental operator $X$ has a Halmos dilation to a subnormal operator $N_2$, that is, there exists an isometric embedding $\Theta$ from $\mathcal{D}_{P_3}$ to a larger Hilbert space $\mathcal{F}$ so that $X = \Theta^*N_2\Theta$, and there exist subnormal operators $N_1, N_3$ on $\mathcal{F}$ such that $N_1, N_2, N_3$ commute and $(N_1, N_2, N_3)$ can be extended to a commuting triple of normal operators $(U_1, U_2, U_3)$ with the Taylor joint spectrum of $(U_1, U_2, U_3)$ contained in $b\mathcal{P}$, the distinguished boundary of $\mathcal{\bar{P}}$.
			
			\item $(X D_{P_3}P_3 -  D_{P_3}P_2)|_{ker D_{P_3}} = 0$.
		\end{enumerate}
	\end{thm}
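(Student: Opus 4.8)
The plan is to follow the template of the proofs of Theorems~\ref{Necessary Conditions 1} and~\ref{Necessary Conditions 2}. Suppose $(R_1,R_2,R_3)$ is a $\mathcal{\bar{P}}$-isometric dilation of $(P_1,P_2,P_3)$. Since $\mathcal{\bar{P}}$ is polynomially convex, it is enough to test against polynomials, and one may assume the dilation is minimal, so that $\mathcal{K}\ominus\mathcal{H}$ is jointly invariant for $R_1,R_2,R_3$ and each $R_i$ is block lower-triangular,
\[
R_i=\begin{pmatrix} P_i & 0\\ C_i & N_i\end{pmatrix},\qquad 1\le i\le 3,
\]
with respect to $\mathcal{K}=\mathcal{H}\oplus(\mathcal{K}\ominus\mathcal{H})$. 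Because the third coordinate $R_3$ is an isometry, one gets $P_3^*P_3+C_3^*C_3=I_{\mathcal{H}}$ and $N_3^*N_3=I_{\mathcal{K}\ominus\mathcal{H}}$, and hence an isometric embedding $\Theta:\mathcal{D}_{P_3}\to\mathcal{K}\ominus\mathcal{H}$ with $\Theta D_{P_3}=C_3$.

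For part $(1)$, I would invoke the structure theory of $\mathcal{\bar{P}}$-isometries to read off the block relations; in particular the symmetrized-bidisc pair $(R_2,R_3)$ satisfies $R_2=R_2^*R_3$, which forces $C_2=N_2^*C_3$ and $N_2=N_2^*N_3$. Substituting these into the fundamental equation $P_2-P_2^*P_3=D_{P_3}XD_{P_3}$ and using $\Theta D_{P_3}=C_3$ yields $D_{P_3}XD_{P_3}=D_{P_3}\Theta^*N_2\Theta D_{P_3}$, so by uniqueness of the fundamental operator $X=\Theta^*N_2\Theta$; that is, $N_2$ is a Halmos dilation of $X$. The compression $(N_1,N_2,N_3)=(R_1,R_2,R_3)|_{\mathcal{K}\ominus\mathcal{H}}$ is a $\mathcal{\bar{P}}$-contraction (a spectral set passes to restrictions on invariant subspaces) and, since $N_3$ is an isometry and the isometry relations above hold, it is in fact a $\mathcal{\bar{P}}$-isometry; by definition it extends to a $\mathcal{\bar{P}}$-unitary $(U_1,U_2,U_3)$ whose Taylor joint spectrum lies in $b\mathcal{P}$, the distinguished boundary described in Theorem~\ref{distinguished boundary pentablock}. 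As $U_1,U_2,U_3$ are commuting normal operators, $N_1,N_2,N_3$ are commuting subnormals, and part $(1)$ follows.

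For part $(2)$, I would exploit the commutativity $R_iR_j=R_jR_i$ of the dilation together with the block relations. Writing the $(2,1)$-entry of the pertinent commutation, substituting $C_2=N_2^*C_3$, $N_2=N_2^*N_3$ and $C_3=\Theta D_{P_3}$, then compressing on the left by $\Theta^*$ and using $X=\Theta^*N_2\Theta$ together with the model identity $D_{P_3}P_2=XD_{P_3}+X^*D_{P_3}P_3$ for the fundamental operator of $(P_2,P_3)$, one is led to an identity of the shape $XD_{P_3}P_3-D_{P_3}P_2=(\,\cdot\,)D_{P_3}$ on $\mathcal{H}$. Restricting to $Ker\,D_{P_3}$ kills the trailing factor $D_{P_3}$, so $(XD_{P_3}P_3-D_{P_3}P_2)|_{Ker\,D_{P_3}}=0$, which is part $(2)$.

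The hard part will be part $(1)$: promoting the compression $(N_1,N_2,N_3)$ from a $\mathcal{\bar{P}}$-contraction to a genuine $\mathcal{\bar{P}}$-isometry and localizing its Taylor spectrum in $b\mathcal{P}$. In contrast to the tetrablock and $\Gamma$ cases, the $P_1$-coordinate of the pentablock is governed on $b\mathcal{P}$ by the modulus constraint $|x_1|^2=1-\frac{1}{4}|x_2|^2$ (see $K_0$ and Theorem~\ref{distinguished boundary pentablock}) rather than by a purely algebraic relation, so the structure theory of pentablock isometries must be used carefully to pin down the normal extension. A secondary subtlety in part $(2)$ is obtaining the condition with $X$ and not $X^*$: the model identity only yields the $X^*$-version automatically, so condition $(2)$ is equivalent to $(X-X^*)D_{P_3}P_3|_{Ker\,D_{P_3}}=0$ and encodes exactly the extra constraint forced by the presence of the $P_1$-coordinate.
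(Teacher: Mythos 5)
Your part (1) follows the paper's proof step for step: the same lower-triangular $2\times2$ blocks, the same isometry $\Theta$ with $\Theta D_{P_3}=C_3$ coming from $R_3$ being an isometry, the relation $R_2=R_2^*R_3$ giving $C_2=N_2^*C_3$, uniqueness of the fundamental operator giving $X=\Theta^*N_2\Theta$, and restriction-plus-extension for the spectral statement. The step you flag as ``the hard part'' is handled in the paper by the characterization of $\mathcal{\bar{P}}$-isometries in [Theorem $5.2$, \cite{Jindal}]: a commuting triple is a $\mathcal{\bar{P}}$-isometry if and only if its last two entries form a $\Gamma_{E(2;1;2)}$-isometry and $R_1^*R_1=I-\frac{1}{4}R_2^*R_2$. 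This is used twice: once on the dilation, whose $(2,2)$-block compression gives $N_1^*N_1=I_{\mathcal{K}\ominus\mathcal{H}}-\frac{1}{4}N_2^*N_2$, and once in the converse direction on $(N_1,N_2,N_3)$, after [Theorem $2.14$, \cite{Roy}] upgrades $(N_2,N_3)$ (a $\Gamma_{E(2;1;2)}$-contraction with $N_3$ an isometry) to a $\Gamma_{E(2;1;2)}$-isometry. Your sketch never writes down the identity $R_1^*R_1=I-\frac{1}{4}R_2^*R_2$, and without compressing it you have no control over the $N_1$ coordinate; once you add that line, your part (1) is the paper's argument.

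Part (2) contains a genuine gap, and it sits precisely where the paper's own computation is most delicate. The $(2,1)$ entry of $R_2R_3=R_3R_2$, after the substitutions $C_2=N_2^*C_3$, $C_3=\Theta D_{P_3}$ and left-compression by $\Theta^*$, yields
\begin{equation*}
\Theta^*N_2^*\Theta\, D_{P_3}P_3-D_{P_3}P_2=\Theta^*(N_3N_2^*-N_2)\Theta D_{P_3},
\end{equation*}
and since $\Theta^*N_2^*\Theta=X^*$ (not $X$), what comes out of this route is the $X^*$-version of condition (2), exactly as you observe. (The paper's \eqref{U 10} writes $X$ at this point, silently identifying $\Theta^*N_2^*\Theta$ with $X$; read literally that is an $X$/$X^*$ slip, so the paper proves the theorem only with $X^*$ in place of $X$.) Your proposed repair --- inserting the model identity $D_{P_3}P_2=XD_{P_3}+X^*D_{P_3}P_3$ --- cannot produce ``an identity of the shape $XD_{P_3}P_3-D_{P_3}P_2=(\cdot)D_{P_3}$'': substituting the model identity into the display above collapses its left-hand side to $-XD_{P_3}$, so both sides carry a trailing factor $D_{P_3}$ and the resulting statement is vacuous on $\mathrm{Ker}\,D_{P_3}$. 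In fact the model identity alone shows that the $X^*$-version holds for \emph{every} $\mathcal{\bar{P}}$-contraction, with no dilation hypothesis at all. As your own closing remark makes precise, the stated condition (2) is equivalent to $(X-X^*)D_{P_3}P_3|_{\mathrm{Ker}\,D_{P_3}}=0$, and this is derived nowhere --- not by your argument, and not by the single commutation relation $R_2R_3=R_3R_2$ that you (and the paper) use; that relation only gives $\Theta^*N_3N_2^*\Theta=0$. To close the gap you would have to extract the extra constraint from relations not yet exploited (for instance those involving $R_1$), or else weaken the conclusion to the $X^*$-statement.
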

	
	\begin{proof}
		Suppose that $(R_1, R_2, R_3)$ is a $\mathcal{\bar{P}}$-isometric dilation of the $\mathcal{\bar{P}}$-contraction $(P_1, P_2, P_3).$  It is important to note that $\mathcal P$ is a polynomially convex [Theorem $6.3$, \cite{Young}]. Therefore, it suffices to work with polynomials  rather than the entire algebra $\mathcal O(\mathcal P)$, and we can assume, without loss of generality, that,
		\[\mathcal{K} = \overline{\rm{span}}\{R^{n_1}_1R^{n_2}_2R^{n_3}_3h : h \in \mathcal{H}, n_1, n_2, n_3 \in \mathbb{N} \cup \{0\}\}.\] 
According to the definition, we have
		\[(R^*_1, R^*_2, R^*_3)|_{\mathcal{H}} = (P^*_1, P^*_2, P^*_3).\]
Let the $2 \times 2$ block operator matrix of $R_i$ be of the form 
		\begin{equation}\label{U 1}
			\begin{aligned}
				R_i &=
				\begin{pmatrix}
					P_i & 0\\
					B_i & N_i
				\end{pmatrix} \,\, \text{for} \,\, 1 \leqslant i \leqslant 3,
			\end{aligned}
		\end{equation}
		with respect to the decomposition $\mathcal{K} = \mathcal{H} \oplus (\mathcal{K} \ominus \mathcal{H})$. Since $(R_1, R_2, R_3)$ is a $\mathcal{\bar{P}}$-isometry, it follows from [Theorem $5.2$, \cite{Jindal}]  that $(R_2, R_3)$ is a $\Gamma_{E(2;1;2)}$-isometry and $R^*_1R_1 = I - \frac{1}{4}R^*_2R_2$. As $R_3$ is an isometry,  it implies from \eqref{U 1} that 	\begin{equation}\label{U 2}
			\begin{aligned}
				P^*_3P_3 + B^*_3B_3 &= I_{\mathcal{H}}, \,\, N^*_3N_3 = I_{\mathcal{K} \ominus \mathcal{H}}, \,\, \text{and} \,\, N^*_3B_3 =  0.
			\end{aligned}
		\end{equation}
It yields from $(\ref{U 2})$  that there exists an isometry $\Theta : \mathcal{D}_{P_3} \to \mathcal{K} \ominus \mathcal{H}$ such that 
\begin{equation}\label{U 11}\Theta D_{P_3} = B_3.\end{equation}
As $(R_2, R_3)$ is a $\Gamma_{E(2;1;2)}$-isometry, from  [Theorem $2.6$, \cite{ay1}], we get $R_2 = R^*_2R_3$. 
Observe that
		\begin{equation}\label{U 3}
			\begin{aligned}
				R_2=&\begin{pmatrix}
					P_2 & 0\\
					B_2 & N_2
				\end{pmatrix} \\&=
				\begin{pmatrix}
					P^*_2 & B^*_2\\
					0 & N^*_2
				\end{pmatrix}
				\begin{pmatrix}
					P_3 & 0\\
					B_3 & N_3
				\end{pmatrix}\\&=\begin{pmatrix}
					P^*_2P_3 + B^*_2B_3 & B^*_2N_3\\
					N^*_2B_3 & N^*_2N_3
				\end{pmatrix}.
			\end{aligned}
		\end{equation}
It implies from \eqref{U 3} that
		\begin{equation}\label{U 4}
			\begin{aligned}
				P_2 &= P^*_2P_3 + B^*_2B_3, \,\, B^*_2N_3 = 0, \,\, N^*_2B_3 = B_2 \,\, \text{and} \,\, N_2 = N^*_2N_3.
			\end{aligned}
		\end{equation}
As $(P_2, P_3)$ is a $\Gamma_{E(2;1;2)}$-contraction,  it follows from Theorem \ref{FO of Gamma} and \eqref{U 4} that 
		\begin{equation}\label{U 5}
			\begin{aligned}
				D_{P_3}XD_{P_3} &= P_2 - P^*_2P_3 = B^*_2B_3 = B^*_3N_2B_3 = D_{P_3}\Theta^*N_2\Theta D_{P_3}.
			\end{aligned}
		\end{equation}
By the uniqueness of the fundamental operator $X,$ we have
		\begin{equation}\label{U 6}
			\begin{aligned}
				X &= \Theta^*N_2\Theta.
			\end{aligned}
		\end{equation}
Since $(R_1, R_2, R_3)$ is a $\mathcal{\bar{P}}$-isometry and $(R_2, R_3)_{|_{\mathcal{K} \ominus \mathcal{H}}}=(N_2,N_3),$  it implies that $(N_2, N_3)$ is a $\Gamma_{E(2;1;2)}$-contraction. Since $(N_2, N_3)$ is a $\Gamma_{E(2;1;2)}$-contraction and $N^*_3N_3 = I_{\mathcal{K} \ominus \mathcal{H}},$ it implies from [Theorem $2.14$, \cite{Roy}]  that $(N_2, N_3)$ is a $\Gamma$-isometry.
		
Since $R^*_1R_1 = I - \frac{1}{4}R^*_2R_2$, we note that
\begin{equation}\label{U 7}
			\begin{aligned}
				R^*_1R_1&=\begin{pmatrix}
					P^*_1 & B^*_1\\
					0 & N^*_1
				\end{pmatrix}
				\begin{pmatrix}
					P_1 & 0\\
					B_1 & N_1
				\end{pmatrix} \\&=\begin{pmatrix}
					P^*_1P_1 + B^*_1B_1 & B^*_1N_1\\
					N^*_1B_1 & N^*_1N_1
				\end{pmatrix} \\&=I - \frac{1}{4}R^*_2R_2\\&=
				\begin{pmatrix}
					I_{\mathcal{H}} & 0\\
					0 & I_{\mathcal{K} \ominus \mathcal{H}}
				\end{pmatrix} -
				\frac{1}{4}
				\begin{pmatrix}
					P^*_2 & B^*_2\\
					0 & N^*_2
				\end{pmatrix}
				\begin{pmatrix}
					P_2 & 0\\
					B_2 & N_2
				\end{pmatrix}\\&=\begin{pmatrix}
					I_{\mathcal{H}} - \frac{1}{4}(P^*_2P_2 + B^*_2B_2) & -\frac{B^*_2N_2}{4}\\
					-\frac{N^*_2B_2}{4} & I_{\mathcal{K} \ominus \mathcal{H}} - \frac{1}{4}N^*_2N_2
				\end{pmatrix}
			\end{aligned}
		\end{equation}
It follows from \eqref{U 7} that
		\begin{equation}\label{U 8}
			\begin{aligned}
				P^*_1P_1 + B^*_1B_1 &= I_{\mathcal{H}} - \frac{1}{4}(P^*_2P_2 + B^*_2B_2), \,\, N^*_1B_1 = - \frac{1}{4}N^*_2B_2, \,\, N^*_1N_1 = I_{\mathcal{K} \ominus \mathcal{H}} - \frac{1}{4}N^*_2N_2.
			\end{aligned}
		\end{equation}
		Since $(N_2, N_3)$ is a $\Gamma$-isometry and $N^*_1N_1 = I_{\mathcal{K} \ominus \mathcal{H}} - \frac{1}{4}N^*_2N_2,$  it yields  from [Theorem $5.2$, \cite{Jindal}]  that $\textbf{N} = (N_1, N_2, N_3)$ is a $\mathcal{\bar{P}}$-isometry. Thus, by definition, $\textbf{N}$ can be extended to a $\mathcal{\bar{P}}$-unitary $\textbf{U} = (U_1, U_2, U_3)$ on some larger Hilbert space. Hence, by definition of $\mathcal{\bar{P}}$-unitary, we conclude that the Taylor joint spectrum $\sigma(\textbf{U})$ is contained in the distinguished boundary $b\overline{\mathcal{P}}$ of$\mathcal{\bar{P}}$,  so $(1)$ follows.
		
Since  $R_2R_3=R_3R_2,$  we have
		\begin{equation}\label{U 9}
			\begin{aligned}
				B_2P_3 - B_3P_2 &= N_3B_2 - N_2B_3,
			\end{aligned}
		\end{equation}
We see from \eqref{U 5} and  \eqref{U 11} that 		
\begin{equation}\label{U 91}
			\begin{aligned}
				B_2P_3 - B_3P_2 &= N^*_2B_3P_3-\Theta D_{P_3}P_2\\&=N^*_2\Theta D_{P_3}P_3-\Theta D_{P_3}P_2
			\end{aligned}
		\end{equation}		
$${\rm{and}}$$
\begin{equation}\label{U 92}
			\begin{aligned}
				N_3B_2 - N_2B_3 &= N_3N^*_2B_3-N_2\Theta D_{P_3}\\&=(N_3N_2^*-N_2)\Theta D_{P_3}.
			\end{aligned}
		\end{equation}
We deduce from \eqref{U 9}, \eqref{U 91} and \eqref{U 92} that 		
\begin{equation}\label{U 93}
N^*_2\Theta D_{P_3}P_3-\Theta D_{P_3}P_2=(N_3N_2^*-N_2)\Theta D_{P_3}.
\end{equation}		
Multiplying $\Theta^*$ from left side of \eqref{U 93} and by using \eqref{U 6}, we conclude that 		
		\begin{equation}\label{U 10}
			\begin{aligned}
				(X D_{P_3}P_3 -  D_{P_3}P_2) &= \Theta^*(N_3N^*_2 - N_2)\Theta D_{P_3}.
			\end{aligned}
		\end{equation}
Therefore, it follows from \eqref{U 10}  that $(X D_{P_3}P_3 -  D_{P_3}P_2)|_{Ker D_{P_3}} = 0$. This completes the proof.
	\end{proof}
	
%	There is no result on conditional dilation for$\mathcal{\bar{P}}$-contraction in literature. But there is an unpublished work \cite{SP} by S. Pal, where he developed a conditional dilation for a$\mathcal{\bar{P}}$-contraction. The details can be found in Theorem 8.5 and Theorem 8.6 of \cite{SP}.
	
	%\begin{rem}
		%Note that, left multiplying by $\Theta^*$ both side we get
		%\begin{equation*}
			%\begin{aligned}
				%&\Theta^*A^*_2\Theta D_{P_3}P_3 - \Theta^*\Theta D_{P_3}P_2 =
				%\Theta^*(A_3A^*_2 - A_2)\Theta D_{P_3}\\
				%\Rightarrow&
				%X^*D_{P_3}P_3 - XD_{P_3} - X^*D_{P_3}P_3 = \Theta^*(A_3A^*_2 - A_2)\Theta D_{P_3}
			%\end{aligned}
		%\end{equation*}
		%Using $(4.5)$ we get $\Theta^*A_3A^*_2\Theta D_{P_3} = 0$.
	%\end{rem}
	
	\section{Some Special Forms of $\Gamma_{E(3; 3; 1, 1, 1)}$-Contraction and $\Gamma_{E(3; 2; 1, 2)}$-Contraction}\label{Sec 4}
In this section we discuss $\Gamma_{E(3; 3; 1, 1, 1)} $-contractions $\textbf{T} = (T_1, \dots, T_7)$ and $\Gamma_{E(3; 2; 1, 2)} $-contractions $\textbf{S} = (S_1, S_2, S_3, \tilde{S}_1, \tilde{S}_2)$, where $T_7$ and $S_3$ are partial isometries, to provide more examples for analysis. We only state the following lemma from [Lemma $3.1$, \cite{Ball}].	\begin{lem}\label{Lemma Partial}
		Let $T$ be a contraction on a Hilbert space $\mathcal{H}$. Then $T$ is a partial isometry if and only if $\mathcal{H}$ can be decomposed as $\mathcal{H} = \mathcal{H}_1 \oplus \mathcal{H}_2$ such that
		\begin{equation*}
			\begin{aligned}
				T = 
				\begin{bmatrix}
					Z & 0
				\end{bmatrix} : \mathcal{H}_1 \to \mathcal{H}
			\end{aligned}
		\end{equation*}
		for some isometry $Z : \mathcal{H}_1 \to \mathcal{H}$.
	\end{lem}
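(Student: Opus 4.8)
The plan is to read the statement off directly from the definition of a partial isometry, namely that a contraction $T$ is a partial isometry precisely when it acts isometrically on the orthogonal complement of its kernel (equivalently, when $T^{*}T$ is an orthogonal projection). Once this characterization is in place the required decomposition is essentially forced: the only natural choice is to let $\mathcal{H}_{2}$ be the kernel of $T$ and $\mathcal{H}_{1}$ its orthogonal complement, so that the block form of $T$ is immediate. I would therefore prove the two implications separately, each time producing the decomposition from this observation.

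For the forward implication, suppose $T$ is a partial isometry. First I would set $\mathcal{H}_{2} = \ker{T}$ and $\mathcal{H}_{1} = \mathcal{H} \ominus \mathcal{H}_{2} = (\ker{T})^{\perp}$, giving the orthogonal decomposition $\mathcal{H} = \mathcal{H}_{1} \oplus \mathcal{H}_{2}$. Defining $Z := T|_{\mathcal{H}_{1}} : \mathcal{H}_{1} \to \mathcal{H}$, the defining property of a partial isometry says exactly that $T$ preserves norms on the initial space $(\ker{T})^{\perp} = \mathcal{H}_{1}$, so $Z$ is an isometry. Since $T$ annihilates $\mathcal{H}_{2} = \ker{T}$, writing a general vector as $h = h_{1} \oplus h_{2}$ yields $Th = Zh_{1}$, which is precisely the block representation $T = [\,Z \; 0\,]$ relative to $\mathcal{H} = \mathcal{H}_{1} \oplus \mathcal{H}_{2}$.

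For the reverse implication, I would begin from a decomposition $\mathcal{H} = \mathcal{H}_{1} \oplus \mathcal{H}_{2}$ with $T = [\,Z \; 0\,]$ and $Z : \mathcal{H}_{1} \to \mathcal{H}$ an isometry. For $h = h_{1} \oplus h_{2}$ one computes $Th = Zh_{1}$, so $\|Th\| = \|Zh_{1}\| = \|h_{1}\| \le \|h\|$, consistent with $T$ being a contraction; moreover $Th = 0$ forces $h_{1} = 0$ by injectivity of the isometry $Z$, whence $\ker{T} = \mathcal{H}_{2}$ and $(\ker{T})^{\perp} = \mathcal{H}_{1}$. Consequently $T|_{(\ker{T})^{\perp}} = Z$ is an isometry, so $T$ is a partial isometry. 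I do not anticipate any real obstacle, since the lemma is little more than a reformulation of the definition; the single point deserving a line of care is the identification $\ker{T} = \mathcal{H}_{2}$ in the reverse direction, which is exactly where the injectivity of $Z$ is used.
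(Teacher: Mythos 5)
Your proof is correct. Note, however, that the paper itself does not prove this lemma at all: it is only stated, with a citation to Lemma 3.1 of Ball and Sau \cite{Ball}, so there is no internal argument to compare against. Your two-implication argument --- taking $\mathcal{H}_2 = \ker T$, $\mathcal{H}_1 = (\ker T)^{\perp}$, $Z = T|_{\mathcal{H}_1}$ in the forward direction, and recovering $\ker T = \mathcal{H}_2$ from the injectivity of the isometry $Z$ in the reverse direction --- is the standard, self-contained way to establish the equivalence, and it correctly handles the one point that needs care (identifying $\mathcal{H}_2$ with the kernel so that the initial space of $T$ is exactly $\mathcal{H}_1$). In effect you have supplied the proof the paper outsources to \cite{Ball}.
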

	
	\begin{prop}\label{Prop Partial 1}
		Let $\textbf{T} = (T_1, \dots, T_7)$ be a $\Gamma_{E(3; 3; 1, 1, 1)}$-contraction on a Hilbert space $\mathcal{H}$, with $T_7$ being a partial isometry. Suppose that $F_1, \dots, F_6$ are fundamental operators for $\textbf{T}$. Then the following is true:
\begin{enumerate}
\item ${\rm{Ker}}~ T_7$ is jointly invariant under $(T_1, \dots, T_6),$ and
\item if we denote $(D_1, \dots, D_6) = (T_1 \dots, T_7)|_{Ker T_7}$, then 
\begin{enumerate}
\item  $F_iF_j = F_jF_i$ if and only if $D_iD_j=D_jD_i$ for $1 \leqslant i, j \leqslant 6$,
\item $F_iF^*_j - F^*_jF_i = F_jF^*_i - F^*_iF_j$ if and only if $D_iD ^*_j - D^*_jD_i = D_jD ^*_i - D ^*_iD_j$ for $1 \leqslant i, j \leqslant 6$.
\end{enumerate}
\end{enumerate}
	\end{prop}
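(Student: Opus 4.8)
The plan is to exploit the rigid structure that a partial isometry forces on the defect operator $D_{T_7}$. This structure collapses the fundamental operators $F_i$ onto the restrictions $D_i = T_i|_{\operatorname{Ker} T_7}$, after which both parts of the proposition fall out at once.

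First I would record the key structural fact. Since $T_7$ is a partial isometry, $T_7^*T_7$ is the orthogonal projection onto $(\operatorname{Ker} T_7)^\perp$, so $D_{T_7}^2 = I - T_7^*T_7 = P_{\operatorname{Ker} T_7}$. As this projection is already idempotent and self-adjoint, $D_{T_7} = P_{\operatorname{Ker} T_7}$; in particular its range is closed, so $\mathcal{D}_{T_7} = \operatorname{Ker} T_7$. Alternatively one can read this off directly from the block form $T_7 = \begin{bmatrix} Z & 0 \end{bmatrix}$ supplied by Lemma \ref{Lemma Partial}. This identification of the defect space with the kernel is the engine of the whole argument.

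Next I would substitute a vector $h \in \operatorname{Ker} T_7$ into the fundamental equation \eqref{Fundamental 1}, namely $T_i - T^*_{7-i}T_7 = D_{T_7}F_iD_{T_7}$. On $\operatorname{Ker} T_7$ we have $T_7 h = 0$ and $D_{T_7}h = h$, and since $F_i$ maps $\mathcal{D}_{T_7} = \operatorname{Ker} T_7$ into itself we also get $D_{T_7}F_i h = F_i h$. Hence $T_i h = F_i h$ for every $h \in \operatorname{Ker} T_7$ and each $1 \leq i \leq 6$. This single computation yields part $(1)$, since $F_i h \in \operatorname{Ker} T_7$ shows $\operatorname{Ker} T_7$ is invariant under each $T_i$; and it simultaneously identifies the compressions, giving $D_i = T_i|_{\operatorname{Ker} T_7} = F_i$ as operators on the common space $\operatorname{Ker} T_7 = \mathcal{D}_{T_7}$.

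With $D_i = F_i$ in hand, parts $(2)(a)$ and $(2)(b)$ are immediate: the asserted equivalences become tautologies between the same operators, so $F_iF_j = F_jF_i \iff D_iD_j = D_jD_i$ and $F_iF_j^* - F_j^*F_i = F_jF_i^* - F_i^*F_j \iff D_iD_j^* - D_j^*D_i = D_jD_i^* - D_i^*D_j$ hold with nothing further to check. The only genuine content is the identification $D_i = F_i$, and I expect the main (though modest) obstacle to be the careful bookkeeping that $D_{T_7}$ collapses to the kernel projection and that each $F_i$ preserves $\mathcal{D}_{T_7}$; once those two points are pinned down, essentially no computation remains.
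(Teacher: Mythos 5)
Your proposal is correct and follows essentially the same route as the paper: both arguments rest on the observation that a partial isometry forces $D_{T_7}$ to be the orthogonal projection onto $\operatorname{Ker} T_7$ (so $\mathcal{D}_{T_7} = \operatorname{Ker} T_7$), and both then use the fundamental equation $T_i - T_{7-i}^*T_7 = D_{T_7}F_iD_{T_7}$ to identify $F_i$ with the restriction $D_i = T_i|_{\operatorname{Ker} T_7}$, from which parts $(1)$, $(2)(a)$ and $(2)(b)$ follow at once. The only difference is presentational: the paper carries out the identification via a $2\times 2$ block-matrix computation relative to $\mathcal{H} = \operatorname{Ran} T_7^* \oplus \operatorname{Ker} T_7$ using Lemma \ref{Lemma Partial}, whereas you apply the fundamental equation directly to vectors $h \in \operatorname{Ker} T_7$, which is a cleaner way of extracting the same conclusion.
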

	
	\begin{proof}
We first note that $T_7$, being a partial isometry, from Lemma \ref{Lemma Partial}, we get
\begin{equation}\label{Partial}
\begin{aligned}
D^2_{T_7} &=
\begin{pmatrix}
I_{\Ran T^*_7} & 0\\
0 & I_{Ker T_7}
\end{pmatrix} -
\begin{pmatrix}
I_{\Ran T^*_7} & 0\\
0 & 0
\end{pmatrix} =
\begin{pmatrix}
0 & 0\\
0 & I_{Ker T_7}
\end{pmatrix}&=D_{T_7}
\end{aligned}
\end{equation}		
It implies from \eqref{Partial} that $\mathcal{D}_{T_7} = \{0\} \oplus Ker T_7$. Thus, the fundamental operators $F_i, 1\leq i \leq 6,$ acting on $\mathcal{D}_{T_7},$ are expressed as follows:
		\begin{equation}\label{Partial 1}
			\begin{aligned}
				F_i &=
				\begin{pmatrix}
					0 & 0\\
					0 & P_i
				\end{pmatrix} \,\, \text{for} \,\, 1 \leqslant i \leqslant 6
			\end{aligned}
		\end{equation}
for some $P_i, 1\leq i \leq 6$ on $Ker T_7$.
Let the $2 \times 2$ block matrix of $T_i, 1\leq i \leq 6$ be the form 		
\begin{equation}\label{Partial 2}
			\begin{aligned}
				T_i &=
				\begin{pmatrix}
					A_i & B_i\\
					C_i & D_i
				\end{pmatrix} \,\, \text{for} \,\, 1 \leqslant i \leqslant 6
			\end{aligned}
		\end{equation} with respect the decomposition $\mathcal{H} = \Ran T^*_7 \oplus Ker T_7$ of $\mathcal{H}$ 		and 
		\begin{equation}\label{Partial 3}
			\begin{aligned}
				T_7 &=
				\begin{pmatrix}
					X & 0\\
					Y & 0
				\end{pmatrix} : \Ran T^*_7 \oplus Ker T_7 \to \Ran T^*_7 \oplus Ker T_7.
			\end{aligned}
		\end{equation}
Since $T_7$ is a partial isometry, it follows from  Lemma \ref{Lemma Partial} that
		$Z =
		\begin{pmatrix}
			X\\
			Y
		\end{pmatrix}$ is an isometry.  As $\textbf{T} = (T_1, \dots, T_7)$ is a $\Gamma_{E(3; 3; 1, 1, 1)}$-contraction, it yields from [Theorem $2.4$, \cite{apal3}] that there exists unique operators $F_i$ and $F_{7-i}$ in $\mathcal{B}(\mathcal{D}_{T_7})$ for $1\leq i \leq 6$ such that the operator $F_i + zF_{7-i}, 1\leq i \leq 6,$ has numerical radius not exceeding $1$ for every $z \in \mathbb{T}$ and 
\begin{equation}\label{Partial 4}
			\begin{aligned}
				T_i - T^*_{7-i}T_7 &=
				D_{T_7}F_iD_{T_7} \,\,\text{and}\,\,
				T_{7-i} - T^*_iT_7 =
				D_{T_7}F_{7-i}D_{T_7}.
			\end{aligned}
		\end{equation}

We notice from \eqref{Partial 2} and \eqref{Partial 4}  that for $1\leq i \leq 6$
\begin{equation}\label{Partial 11}
\begin{aligned}
T_i - T^*_{7-i}T_7 &=\begin{pmatrix}
					A_i & B_i\\
					C_i & D_i
				\end{pmatrix}-\begin{pmatrix}
					A_{7-i }^*& C_{7-i}^*\\
					B_{7-i }^*& D_{7-i}^*
				\end{pmatrix}\begin{pmatrix}
					X & 0\\
					Y & 0
				\end{pmatrix}\\&=\begin{pmatrix}
					A_i-A_{7-i}^*X-C_{7-i}^*Y & B_i\\
					C_i-B_{7-i}^*X-D_{7-i}^*Y & D_i
				\end{pmatrix}\\&=\begin{pmatrix}
					0 & 0\\
					0 & P_i
				\end{pmatrix}
\end{aligned}
\end{equation}
From $(\ref{Partial 11}),$ we derive
		\begin{equation}\label{Partial 5}
			\begin{aligned}
				A_i = (A^*_{7-i}X + C^*_{7-i}Y), B_i = 0, C_i = D^*_{7-i}Y,  D_i = P_i,
			\end{aligned}
		\end{equation}
Therefore, from \eqref{Partial 5}, we deduce that 		
\begin{equation}\label{Partial 7}
			\begin{aligned}
				T_i &=
				\begin{pmatrix}
					A^*_{7-i}X + C^*_{7-i}Y& 0\\
					D^*_{7-i}Y & P_i
				\end{pmatrix} \,\, \text{for} \,\, 1 \leqslant i \leqslant 6.
			\end{aligned}
		\end{equation}
It implies from \eqref{Partial 1} and \eqref{Partial 5} that
		\begin{equation}\label{Partial 8}
			\begin{aligned}
				F_i &=
				\begin{pmatrix}
					0 & 0\\
					0 & D_i
				\end{pmatrix} \,\, \text{for} \,\, 1 \leqslant i \leqslant 6,
			\end{aligned}
		\end{equation}
and $(1)$ and $(2)$ follow. This completes the proof.		
	\end{proof}
We state the  analogous theorem for $\Gamma_{E(3; 2; 1, 2)}$-contraction. It's proof is similar to that of the previous theorem. Therefore, we skip the proof.	
		\begin{prop}\label{Prop Partial 2}
		Let $\textbf{S} = (S_1, S_2, S_3, \tilde{S}_1, \tilde{S}_2)$ be a $\Gamma_{E(3; 2; 1, 2)}$-contraction on a Hilbert space $\mathcal{H}$ with $S_3$ partial isometry and $G_1, 2G_2, 2\tilde{G}_1, \tilde{G}_2$ be the fundamental operators of $\textbf{S}$. Then the following hold:
		\begin{enumerate}
			\item $Ker ~S_3$ is invariant under $(S_1, S_2, \tilde{S}_1, \tilde{S}_2)$, and
			\item if we denote $(E_1, 2E_2, 2\tilde{E}_1, \tilde{E}_2) = (S_1, S_2, \tilde{S}_1, \tilde{S}_2)|_{Ker S_3},$ then
			\begin{enumerate}
				\item  $G_1, 2G_2, 2\tilde{G}_1, \tilde{G}_2$ commute with each other if and only if $E_1, 2E_2, 2\tilde{E}_1, \tilde{E}_2$ commute with each other,
				
				\item $G^*_1G_1 - G_1G^*_1 = 4(G^*_2G_2 - G_2G^*_2)$ if and only if $E^*_1E_1 - E_1E^*_1 = 4(E^*_2E_2 - E_2E^*_2)$,
				
				\item $G^*_1G_1 - G_1G^*_1 = 4(\tilde{G}^*_1\tilde{G}_1 - \tilde{G}_1\tilde{G}^*_1)$ if and only if $E^*_1E_1 - E_1E^*_1 = 4(\tilde{E}^*_1\tilde{E}_1 - \tilde{E}_1\tilde{E}^*_1)$,
				
				\item $G^*_1G_1 - G_1G^*_1 = \tilde{G}^*_2\tilde{G}_2 - \tilde{G}_2\tilde{G}^*_2$ if and only if $E^*_1E_1 - E_1E^*_1 = \tilde{E}^*_2\tilde{E}_2 - \tilde{E}_2\tilde{E}^*_2$,
				
				\item $G^*_2G_2 - G_2G^*_2 = \tilde{G}^*_1\tilde{G}_1 - \tilde{G}_1\tilde{G}^*_1$ if and only if $E^*_2E_2 - E_2E^*_2 = \tilde{E}^*_1\tilde{E}_1 - \tilde{E}_1\tilde{E}^*_1$,
				
				\item $4(G^*_2G_2 - G_2G^*_2) = \tilde{G}^*_2\tilde{G}_2 - \tilde{G}_2\tilde{G}^*_2$ if and only if $4(E^*_2E_2 - E_2E^*_2) = \tilde{E}^*_2\tilde{E}_2 - \tilde{E}_2\tilde{E}^*_2$,
				
				\item $4(\tilde{G}^*_1\tilde{G}_1 - \tilde{G}_1\tilde{G}^*_1) = \tilde{G}^*_2\tilde{G}_2 - \tilde{G}_2\tilde{G}^*_2$ if and only if $4(\tilde{E}^*_1\tilde{E}_1 - \tilde{E}_1\tilde{E}^*_1) = \tilde{E}^*_2\tilde{E}_2 - \tilde{E}_2\tilde{E}^*_2$,
				
				\item $G_1G^*_2 - G^*_2G_1 = G_2G^*_1 - G^*_1G_2$ if and only if $E_1E^*_2 - E^*_2E_1 = E_2E^*_1 - E^*_1E_2$,
				
				\item $G_1\tilde{G}^*_1 - \tilde{G}^*_1G_1 = \tilde{G}_1G^*_1 - G^*_1\tilde{G}_1$ if and only if $E_1\tilde{E}^*_1 - \tilde{E}^*_1E_1 = \tilde{E}_1E^*_1 - E^*_1\tilde{E}_1$,
				
				\item $G_1\tilde{G}^*_2 - \tilde{G}^*_2G_1 = \tilde{G}_2G^*_1 - G^*_1\tilde{G}_2$ if and only if $E_1\tilde{E}^*_2 - \tilde{E}^*_2E_1 = \tilde{E}_2E^*_1 - E^*_1\tilde{E}_2$,
				
				\item $G_2\tilde{G}^*_1 - \tilde{G}^*_1G_2 = \tilde{G}_1G^*_2 - G^*_2\tilde{G}_1$ if and only if $E_2\tilde{E}^*_1 - \tilde{E}^*_1E_2 = \tilde{E}_1E^*_2 - E^*_2\tilde{E}_1$,
				
				\item $G_2\tilde{G}^*_2 - \tilde{G}^*_2G_2 = \tilde{G}_2G^*_2 - G^*_2\tilde{G}_2$ if and only if $E_2\tilde{E}^*_2 - \tilde{E}^*_2E_2 = \tilde{E}_2E^*_2 - E^*_2\tilde{E}_2$,
				
				\item $\tilde{G}_1\tilde{G}^*_2 - \tilde{G}^*_2\tilde{G}_1 = \tilde{G}_2\tilde{G}^*_1 - \tilde{G}^*_1\tilde{G}_2$ if and only if $\tilde{E}_1\tilde{E}^*_2 - \tilde{E}^*_2\tilde{E}_1 = \tilde{E}_2\tilde{E}^*_1 - \tilde{E}^*_1\tilde{E}_2$.
			\end{enumerate}
		\end{enumerate}
	\end{prop}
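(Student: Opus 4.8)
The plan is to mirror the proof of Proposition \ref{Prop Partial 1}, exploiting the fact that when $S_3$ is a partial isometry the defect operator $D_{S_3}$ becomes a projection, which forces the fundamental operators to coincide (up to the stated scalings) with the compressions of $S_1, S_2, \tilde{S}_1, \tilde{S}_2$ to $Ker S_3$. First I would apply Lemma \ref{Lemma Partial} to decompose $\mathcal{H} = \Ran S^*_3 \oplus Ker S_3$ and write
\begin{equation*}
S_3 = \begin{pmatrix} X & 0 \\ Y & 0\end{pmatrix}, \qquad Z = \begin{pmatrix} X \\ Y\end{pmatrix} \text{ an isometry}.
\end{equation*}
A direct computation then gives $D^2_{S_3} = D_{S_3} = \begin{pmatrix} 0 & 0 \\ 0 & I_{Ker S_3}\end{pmatrix}$, so that $\mathcal{D}_{S_3} = \{0\} \oplus Ker S_3$ and each fundamental operator is supported on $Ker S_3$, i.e.\ has block form $\begin{pmatrix} 0 & 0 \\ 0 & \ast\end{pmatrix}$.

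Next I would write $S_1, S_2, \tilde{S}_1, \tilde{S}_2$ as $2\times 2$ block matrices with respect to $\Ran S^*_3 \oplus Ker S_3$ and substitute them into the four fundamental equations \eqref{funda1} and \eqref{funda11}. Since $D_{S_3}(\,\cdot\,)D_{S_3}$ has support only in the $(2,2)$-corner and $S_3$ has vanishing second block-column, comparing the $(1,2)$-entries on both sides of each fundamental equation forces the $(1,2)$-block of each of $S_1, S_2, \tilde{S}_1, \tilde{S}_2$ to vanish (exactly as $B_i = 0$ arose in \eqref{Partial 5}). This makes each operator lower triangular for the decomposition, which is precisely the statement that $Ker S_3$ is invariant under $(S_1, S_2, \tilde{S}_1, \tilde{S}_2)$, giving part $(1)$. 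Comparing the $(2,2)$-entries then identifies each fundamental operator with the corresponding compression: under the identification $\mathcal{D}_{S_3} \cong Ker S_3$ one obtains $G_1 = S_1|_{Ker S_3}$ and $\tilde{G}_2 = \tilde{S}_2|_{Ker S_3}$, and, tracking the factors of $\tfrac{1}{2}$ in \eqref{funda11}, $G_2 = \tfrac{1}{2}S_2|_{Ker S_3}$ and $\tilde{G}_1 = \tfrac{1}{2}\tilde{S}_1|_{Ker S_3}$. With the labels $(E_1, 2E_2, 2\tilde{E}_1, \tilde{E}_2) = (S_1, S_2, \tilde{S}_1, \tilde{S}_2)|_{Ker S_3}$ this says exactly $G_1 = E_1$, $G_2 = E_2$, $\tilde{G}_1 = \tilde{E}_1$, $\tilde{G}_2 = \tilde{E}_2$ as operators on $Ker S_3$. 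I would invoke the uniqueness of the fundamental operators from Theorem \ref{s1s2} to make these identifications rigorous.

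Once the four operator equalities are in hand, every one of the thirteen equivalences $(a)$--$(m)$ in part $(2)$ is immediate: each asserts an identity between commutators built from $G_1, G_2, \tilde{G}_1, \tilde{G}_2$ on one side and the same identity for $E_1, E_2, \tilde{E}_1, \tilde{E}_2$ on the other, and since the two families coincide on $Ker S_3$ the two sides are literally the same relation. The only point requiring care --- and the place where the bookkeeping could go wrong --- is matching the scalars: the fundamental operators are packaged as $G_1, 2G_2, 2\tilde{G}_1, \tilde{G}_2$ and the restrictions as $(E_1, 2E_2, 2\tilde{E}_1, \tilde{E}_2)$, so one must check that the factors of $2$ introduced in \eqref{funda11} cancel against those in the definition of the $E_j$ before reading off conditions such as $G^*_1G_1 - G_1G^*_1 = 4(G^*_2G_2 - G_2G^*_2)$. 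Because the scalings are chosen compatibly, no stray constants survive, and the proposition follows.
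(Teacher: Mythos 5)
Your proposal is correct and takes exactly the route the paper intends: the paper skips this proof, declaring it ``similar to that of the previous theorem,'' and your argument is precisely that adaptation of Proposition \ref{Prop Partial 1} --- use Lemma \ref{Lemma Partial} so that $D_{S_3}$ becomes the projection onto $\{0\}\oplus Ker\, S_3$, compare block entries in the fundamental equations \eqref{funda1} and \eqref{funda11} to get the vanishing $(1,2)$-blocks (part $(1)$) and the identifications $G_1=E_1$, $G_2=E_2$, $\tilde{G}_1=\tilde{E}_1$, $\tilde{G}_2=\tilde{E}_2$, after which all thirteen equivalences in part $(2)$ are immediate. Your attention to the factors of $2$ (so that $2G_2 = S_2|_{Ker S_3} = 2E_2$, etc.) is exactly the bookkeeping needed, and it works out as you claim.
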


We only state the following theorem from [Theorem $2.9$, \cite{Mandal}].
\begin{thm}\label{gamma 3 isometry}
Suppose that $(T_1,T_2,V_3)$ is  a commuting $3$-tuple of operators acting on some Hilbert space $\mathcal H$ with $T_1$ and $T_2$ are contractions and $V_3$  is an isometry. Then $\left(\frac{T_1+T_2+V_3}{3},\frac{T_1T_2+T_2V_3+V_3T_1}{3},T_1T_2V_3 \right)$ is a $\Gamma_3$-contraction. Moreover, $\left(\frac{T_1+T_2+V_3}{3},\frac{T_1T_2+T_2V_3+V_3T_1}{3},T_1T_2V_3\right)$ has a $\Gamma_3$-isometric dilation.
\end{thm}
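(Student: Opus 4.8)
The plan is to prove both assertions at once by producing a commuting isometric (co-extension) dilation of the triple $(T_1,T_2,V_3)$ and then symmetrizing. Write $\pi_3(z_1,z_2,z_3)=\big(\tfrac{z_1+z_2+z_3}{3},\tfrac{z_1z_2+z_2z_3+z_3z_1}{3},z_1z_2z_3\big)$, so that $\Gamma_3=\pi_3(\overline{\mathbb D}^3)$ and the triple in the statement is $\pi_3(T_1,T_2,V_3)$. Suppose for the moment that one can find a Hilbert space $\mathcal K\supseteq\mathcal H$ and commuting isometries $(W_1,W_2,W_3)$ on $\mathcal K$ with $\mathcal H$ invariant under each $W_i^*$ and $W_i^*|_{\mathcal H}=T_i^*$ (with $T_3:=V_3$). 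Then, exactly as in the proof of Theorem \ref{contr}, the commuting isometries $W_1,W_2,W_3$ symmetrize to a $\Gamma_3$-isometry $\pi_3(W_1,W_2,W_3)$ (the $\Gamma_3$-analogue of [Theorem $4.4$, \cite{apal2}]), and since $\mathcal H$ is invariant under each $W_i^*$ one checks coordinatewise that $\pi_3(W_1,W_2,W_3)^*|_{\mathcal H}=\pi_3(T_1,T_2,V_3)^*$. Thus $\pi_3(W_1,W_2,W_3)$ is a $\Gamma_3$-isometric dilation of $\pi_3(T_1,T_2,V_3)$, which is the second assertion.

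The first assertion then follows without extra work. Extending the commuting isometries $W_1,W_2,W_3$ to commuting unitaries by Ito's theorem yields a commuting unitary dilation of $(T_1,T_2,V_3)$, so the von Neumann inequality holds for \emph{this} triple over $\overline{\mathbb D}^3$; the pullback computation of Theorem \ref{Thm Example 1} then applies verbatim with $(\overline{\mathbb D}^2,\pi)$ replaced by $(\overline{\mathbb D}^3,\pi_3)$, giving $\|p(\pi_3(T_1,T_2,V_3))\|\le\|p\circ\pi_3\|_{\infty,\overline{\mathbb D}^3}=\|p\|_{\infty,\Gamma_3}$ for every polynomial $p$. Equivalently, a $\Gamma_3$-isometry is itself a $\Gamma_3$-contraction, being the restriction of a normal tuple with joint spectrum in $b\Gamma_3\subseteq\Gamma_3$, and compression to the co-invariant subspace $\mathcal H$ preserves the spectral-set property.

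Everything therefore rests on constructing the commuting isometric dilation of $(T_1,T_2,V_3)$, and this is the main obstacle: for three arbitrary commuting contractions no such dilation need exist (Parrott's example, \cite{paulsen}), so the hypothesis that $V_3$ is an isometry must be used essentially. The approach I would take is to exploit the Wold decomposition $\mathcal H=\mathcal H_u\oplus\mathcal H_s$ of the isometry $V_3$, with $V_3|_{\mathcal H_u}$ unitary and $V_3|_{\mathcal H_s}$ a shift $M_z$ on some $H^2(\mathcal W)$. Since $T_1,T_2$ commute with $V_3$, one first verifies that this decomposition reduces each $T_i$; on $\mathcal H_u$ one applies Ando's theorem to the pair $(T_1,T_2)$ and arranges the dilation to commute with the unitary $V_3|_{\mathcal H_u}$ via a commutant-lifting step, while on $\mathcal H_s$ the operators $T_1,T_2$ are commuting analytic Toeplitz contractions over $M_z$ whose symbols can be dilated. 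The technically delicate point, and the heart of the argument, is carrying out the two-variable Ando dilation of $(T_1,T_2)$ so that the third commuting isometry $V_3$ survives as a commuting isometry $W_3$; this is precisely where the isometric (rather than merely contractive) nature of $V_3$ removes Parrott's obstruction, and it can alternatively be packaged as the known fact that a commuting triple of contractions, one of which is an isometry, admits a commuting unitary dilation.
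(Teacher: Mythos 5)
The paper contains no proof of this theorem to compare against: it is imported verbatim from [Theorem $2.9$, \cite{Mandal}] (``We only state the following theorem\dots''), and the only in-paper indication of how it is proved is Remark \ref{isso}, which exhibits the dilation as the symmetrization under $\pi_3$ of an Ando isometric lift of $(T_1,T_2)$ together with a compatible isometric extension of $V_3$. Your reduction is exactly this and is correct as far as it goes: if $(W_1,W_2,W_3)$ is a commuting isometric co-extension of $(T_1,T_2,V_3)$, then $\pi_3(W_1,W_2,W_3)$ is a $\Gamma_3$-isometry (symmetrization of commuting isometries, via It\^o's extension to commuting unitaries), it co-extends $\pi_3(T_1,T_2,V_3)$ because joint co-invariance passes to polynomials, and the $\Gamma_3$-contraction assertion then follows by compression together with polynomial convexity of $\Gamma_3$. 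So the entire burden of proof is the construction of $(W_1,W_2,W_3)$, which you acknowledge is the heart of the matter but do not supply.

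That construction, as you outline it, fails at its first step. It is not true that the Wold decomposition of $V_3$ reduces the operators commuting with it: take $\mathcal{H}=H^2(\mathbb{D})\oplus L^2(\mathbb{T})$, $V_3=M_z\oplus M_{e^{i\theta}}$, $T_1=\left(\begin{smallmatrix}0 & 0\\ \iota & 0\end{smallmatrix}\right)$ with $\iota\colon H^2(\mathbb{D})\hookrightarrow L^2(\mathbb{T})$ the natural inclusion, and $T_2=0$; then $(T_1,T_2,V_3)$ satisfies all hypotheses, yet the shift part $H^2(\mathbb{D})\oplus\{0\}$ of $V_3$ is not invariant under $T_1$. (The unitary part $\bigcap_n V_3^n\mathcal{H}$ is always invariant under the commutant of $V_3$, but reduction would force $T_1$ to commute with the Wold projection, and here it does not.) Moreover, even granting the reduction, the two sub-problems you then face are each as hard as the original one and are only gestured at: arranging an Ando dilation of $(T_1,T_2)$ to commute with a prescribed unitary is not a routine ``commutant-lifting step'' --- commutant lifting lifts single operators and does not preserve commutativity among the lifts, which is precisely the Parrott obstruction --- and the shift-part claim that the ``symbols can be dilated'' compatibly with $M_z$ is unsubstantiated. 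Finally, your fallback, citing as ``known'' that a commuting triple of contractions one of which is an isometry admits a commuting unitary dilation, is circular in this context: that assertion is essentially the theorem being proved, i.e.\ the content of the result this paper imports from \cite{Mandal}, and you give neither a proof nor an independent reference for it. A genuine repair is possible, for instance by running the Schaffer--Ando construction internally to the von Neumann algebra $\mathcal{M}=\{V_3\}'$: all block entries ($T_i$, $D_{T_i}$, and the Ando twist unitary, which can be obtained from a partial isometry $v$ over $\mathcal{M}$ via the unitary $\left(\begin{smallmatrix} v & 1-vv^*\\ 1-v^*v & v^*\end{smallmatrix}\right)$) can be chosen with entries in $\mathcal{M}$, so that the resulting commuting isometries commute with the diagonal amplification $V_3\oplus V_3\oplus\cdots$, which is an isometric co-extension of $V_3$; but some argument of this kind must actually be carried out, and none appears in your proposal.
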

\begin{rem}\label{isso}
We observe from [Theorem $2.9$, \cite{Mandal}] that $$\left(\tilde{V}_1=\frac{V_1+V_2+V_3\oplus I_{\mathcal K\ominus \mathcal H}}{3},\tilde{V}_2=\frac{V_1V_2+V_2(V_3\oplus I_{\mathcal K\ominus \mathcal H})+(V_3\oplus I_{\mathcal K\ominus \mathcal H})V_1}{3},\tilde{V}_3=V_1V_2(V_3\oplus I_{\mathcal K\ominus \mathcal H})\right),$$ is the $\Gamma_3$-isometric dilation of $\left(\frac{T_1+T_2+V_3}{3},\frac{T_1T_2+T_2V_3+V_3T_1}{3},T_1T_2V_3\right)$. Note that $\|\tilde{V}_i\|\leq 1$ for $1\leq i \leq 3.$ It follows from [Thorem $5.7$, \cite{Bhattacharyya}] that $(\tilde{V}_1,\tilde{V}_2,\tilde{V}_3)$ is also a $\Gamma_{E(2; 2; 1, 1)}$-isometry.

\end{rem}	
Let $\textbf{x}=(x_1,x_2,\dots, x_7)$ and  \begin{equation}\label{psi13}
		\begin{aligned}
			\Psi^{(3)}(z, w, \textbf{x})
			&= \frac{x_4 - zx_5 - wx_6 + zwx_7}{1 - zx_1 - wx_2 + zwx_3}, \, z,w \in \mathbb{D}.
		\end{aligned}
	\end{equation}

\begin{lem}\label{tetra}
$(x_1,0,0,0,0,x_6,x_7)\in \Gamma_{E(3; 3; 1, 1, 1)}$ if and only if $(x_1,x_6,x_7) \in \Gamma_{E(2; 2; 1, 1)}$.
\end{lem}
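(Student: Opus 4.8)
The plan is to translate membership in each domain into a zero-freeness statement for an explicit determinantal polynomial, using the definition of $\mu_{E}$, and then to match the two polynomials by an elementary substitution. The starting point is the identity, valid for any $A=(a_{ij})\in\mathcal{M}_{3\times3}(\mathbb{C})$ and $X=\operatorname{diag}(z_1,z_2,z_3)\in E(3;3;1,1,1)$,
\begin{equation*}
\det(I-AX)=1-x_1z_1-x_2z_2-x_4z_3+x_3z_1z_2+x_5z_1z_3+x_6z_2z_3-x_7z_1z_2z_3,
\end{equation*}
where $(x_1,\dots,x_7)$ are precisely the coordinates of $A$ in the definition of $\Gamma_{E(3; 3; 1, 1, 1)}$; this is a routine cofactor expansion. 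Since $\|X\|=\max(|z_1|,|z_2|,|z_3|)$, the definition of $\mu_{E(3;3;1,1,1)}$ shows that $\mu_{E(3;3;1,1,1)}(A)\le1$ holds exactly when the right-hand side is nowhere zero on $\mathbb{D}^3$, and this condition depends only on $(x_1,\dots,x_7)$. The parallel $2\times2$ identity $\det(I-B\operatorname{diag}(w_1,w_2))=1-y_1w_1-y_2w_2+y_3w_1w_2$, where $(y_1,y_2,y_3)$ are the tetrablock coordinates of $B$, together with $\|B\|\le1$, forces this quadratic to be zero-free on $\mathbb{D}^2$.

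Specializing the first identity to $\mathbf{x}=(x_1,0,0,0,0,x_6,x_7)$ collapses it to
\begin{equation*}
p(z_1,z_2,z_3)=1-x_1z_1+x_6z_2z_3-x_7z_1z_2z_3,
\end{equation*}
while the tetrablock polynomial attached to $(x_1,x_6,x_7)$ is $q(w_1,w_2)=1-x_1w_1-x_6w_2+x_7w_1w_2$. The key observation is the identity $p(z_1,z_2,z_3)=q(z_1,-z_2z_3)$. Moreover the map $(z_1,z_2,z_3)\mapsto(z_1,-z_2z_3)$ carries $\mathbb{D}^3$ onto $\mathbb{D}^2$: given $(w_1,w_2)\in\mathbb{D}^2$ one takes $z_1=w_1$ and chooses $z_2,z_3$ with $|z_2|=|z_3|=\sqrt{|w_2|}<1$ and $z_2z_3=-w_2$. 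Hence $p$ is zero-free on $\mathbb{D}^3$ if and only if $q$ is zero-free on $\mathbb{D}^2$.

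The two implications now follow. If $(x_1,0,0,0,0,x_6,x_7)\in\Gamma_{E(3; 3; 1, 1, 1)}$, then some $A$ with these coordinates satisfies $\mu_{E(3;3;1,1,1)}(A)\le1$, so $p$ is zero-free on $\mathbb{D}^3$ and thus $q$ is zero-free on $\mathbb{D}^2$; the polynomial characterization of the closed tetrablock then gives $(x_1,x_6,x_7)\in\Gamma_{E(2; 2; 1, 1)}$. Conversely, a $2\times2$ contraction $B$ realizing $(x_1,x_6,x_7)$ makes $q$ zero-free on $\mathbb{D}^2$, hence $p$ zero-free on $\mathbb{D}^3$; it then remains only to exhibit a single $3\times3$ matrix whose coordinates are $(x_1,0,0,0,0,x_6,x_7)$, and
\begin{equation*}
A=\begin{pmatrix} x_1 & 0 & x_7-x_1x_6\\ 1 & 0 & -x_6\\ 0 & 1 & 0\end{pmatrix}
\end{equation*}
does so (a direct check verifies all seven coordinates). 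Because zero-freeness of $p$ on $\mathbb{D}^3$ is exactly $\mu_{E(3;3;1,1,1)}(A)\le1$, we conclude $(x_1,0,0,0,0,x_6,x_7)\in\Gamma_{E(3; 3; 1, 1, 1)}$.

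The only step that draws on an external result, and hence the main obstacle, is the hard direction of the tetrablock characterization: that zero-freeness of $q$ on $\mathbb{D}^2$ produces a genuine $2\times2$ contraction with the prescribed $(1,1)$- and $(2,2)$-entries and determinant, which I would quote from \cite{Abouhajar}. Everything else is elementary, since the determinant identity is a cofactor expansion and the reduction from three variables to two is the surjectivity of $(z_1,z_2,z_3)\mapsto(z_1,-z_2z_3)$. If one preferred to avoid the tetrablock realization theorem, one could instead attempt to build the $2\times2$ contraction directly from $A$ in the forward direction, but this appears strictly harder than invoking the known characterization.
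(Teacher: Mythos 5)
Your proof is correct, but it follows a genuinely different route from the paper's. The paper proves Lemma \ref{tetra} by quoting two characterizations of $\Gamma_{E(3; 3; 1, 1, 1)}$ from \cite{apal1}: for the forward direction, [Theorem $2.9$, \cite{apal1}] reduces membership of $(x_1,0,0,0,0,x_6,x_7)$ to the statement that $(0,0,\frac{x_6-zx_7}{1-x_1z})$ lies in $\Gamma_{E(2;2;1,1)}$ for every $z\in\mathbb{D}$, which by [Theorem $2.4$, \cite{Abouhajar}] amounts to $\big|\frac{x_6-zx_7}{1-x_1z}\big|\leq 1$ on $\mathbb{D}$; for the converse it invokes [Theorem $2.8$, \cite{apal1}] and bounds the rational function $\Psi^{(3)}$ of \eqref{psi13} by the same quantity. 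You instead work straight from the definition \eqref{mu} of $\mu_{E}$: the cofactor identity for $\det(I-AX)$, the observation that zero-freeness of the resulting polynomial on $\mathbb{D}^3$ depends only on the seven coordinates, the substitution $(z_1,z_2,z_3)\mapsto(z_1,-z_2z_3)$ carrying $\mathbb{D}^3$ onto $\mathbb{D}^2$, and an explicit $3\times3$ matrix realizing $(x_1,0,0,0,0,x_6,x_7)$ (your matrix does check out: all seven coordinates, including $\det A=x_7$, are as claimed). Both proofs ultimately rest on the same external input, the tetrablock theorem [Theorem $2.4$, \cite{Abouhajar}] --- the paper uses its fractional-function criterion, while you use the equivalence between zero-freeness of $1-x_1z-x_6w+x_7zw$ on $\mathbb{D}^2$ and realizability by a $2\times2$ contraction. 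What your argument buys is independence from the companion-paper machinery of \cite{apal1}, a constructive matrix realization, and a transparent isolation of the only nonelementary ingredient; what the paper's route buys is brevity given infrastructure it cites anyway, and a template that transfers verbatim to the companion statements in the subsequent Remark (the patterns $(0,x_2,0,0,x_5,0,x_7)$ and $(0,0,x_3,x_4,0,0,x_7)$), though your substitution trick adapts to those cases just as easily.
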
	
\begin{proof}
By [Theorem $2.9$, \cite{apal1}], a point $(x_1,0,0,0,0,x_6,x_7)\in \Gamma_{E(3; 3; 1, 1, 1)}$ if and only if $(0,0,\frac{x_6-zx_7}{1-x_1z}) \in 
 \Gamma_{E(2; 2; 1, 1)}$ for all $z\in \mathbb D$. As $(0,0,\frac{x_6-zx_7}{1-x_1z}) \in  \Gamma_{E(2; 2; 1, 1)}$ for all $z \in \mathbb D,$ it implies from [Theorem $2.4$,\cite{Abouhajar}] that $\Big |\frac{x_6-zx_7}{1-x_1z}\Big |\leq 1$ for all $z\in \mathbb D$ and hence by [Theorem $2.4$,\cite{Abouhajar}], we deduce that $(x_1,x_6,x_7) \in \Gamma_{E(2; 2; 1, 1)}$.
 
Conversely, suppose that $(x_1,x_6,x_7) \in \Gamma_{E(2; 2; 1, 1)}$. Then, by [Theorem $2.4$,\cite{Abouhajar}], we get  $\Big |\frac{x_6-zx_7}{1-x_1z}\Big |\leq 1$ for all $z\in \mathbb D.$  By [Theorem $2.8$, \cite{apal1}], a point $(x_1,0,0,0,0,x_6,x_7)\in \Gamma_{E(3; 3; 1, 1, 1)}$ if and only if $(x_1,0,0)\in G_{E(2;2;1,1)}$ and $$\|\Psi^{(3)}(\cdot,(x_1,0,0,0,0,x_6,x_7)\|_{H^{\infty}(\mathbb{D}^{2})}\leq 1.$$
As $(x_1,x_6,x_7)\in \Gamma_{E(2; 2; 1, 1)}$, we have $1-x_1z\neq 0$ for all $z \in \mathbb D,$ which implies that $(x_1,0,0)\in G_{E(2;2;1,1)}$. We notice from \eqref{psi13} that for all $z,w \in \mathbb D$
\begin{equation}\label{psi23}
\begin{aligned}
|\Psi^{(3)}(z, w, (x_1,0,0,0,0,x_6,x_7)|
			&= \Big|\frac{ - wx_6 + zwx_7}{1 - zx_1 }\Big|\\&<\Big|\frac{x_6-zx_7}{1-x_1z}\Big| \\&\leq 1
\end{aligned}
\end{equation}
It follows from \eqref{psi23} that $$\|\Psi^{(3)}(\cdot,(x_1,0,0,0,0,x_6,x_7)\|_{H^{\infty}(\mathbb{D}^{2})}\leq 1$$  and hence by above observations, we conclude that $(x_1,0,0,0,0,x_6,x_7)\in \Gamma_{E(3; 3; 1, 1, 1)}$. This completes the proof.

\end{proof}
\begin{rem}
By using a similar argument, one can show that $(0,x_2,0,0,x_5,0,x_7)\in \Gamma_{E(3; 3; 1, 1, 1)}$ if and only if $(x_2,x_5,x_7) \in \Gamma_{E(2; 2; 1, 1)}$. We can also demonstrate that $(0,0,x_3,x_4,0,0,x_7)\in \Gamma_{E(3; 3; 1, 1, 1)}$ if and only if $(x_3,x_4,x_7) \in \Gamma_{E(2; 2; 1, 1)}$.
\end{rem}	
In the following proposition, we establish a relationship between $\Gamma_{E(3; 3; 1, 1, 1)}$-isometry and $\Gamma_{E(2; 2; 1, 1)}$-isometry.
	
	\begin{prop}\label{isso1}
		Let $(T_1,T_6, T_7)$ be a commuting triple of bounded operators on a Hilbert space $\mathcal{H}$. Then $(T_1, T_6, T_7)$ is a $\Gamma_{E(2; 2; 1, 1)}$-isometry if and only if  $(T_1, 0, 0, 0, 0, T_6, T_7)$ is a $\Gamma_{E(3; 3; 1, 1, 1)}$-isometry.
			\end{prop}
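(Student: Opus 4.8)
The plan is to prove both implications by passing to a unitary extension, transporting it coordinatewise, and then restricting back to the original jointly invariant subspace; the only analytic input beyond the definitions is the behaviour of the Taylor joint spectrum of a commuting normal tuple under the coordinate maps that delete or insert a scalar coordinate. Throughout I will use that a $\Gamma_{E(2;2;1,1)}$-unitary (resp.\ $\Gamma_{E(3;3;1,1,1)}$-unitary) is a commuting normal triple (resp.\ $7$-tuple) whose Taylor joint spectrum lies in the distinguished boundary of the tetrablock (resp.\ in the set $K$), and that the corresponding isometries are the restrictions of these to a jointly invariant subspace.

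For the forward implication, suppose $(T_1,T_6,T_7)$ is a $\Gamma_{E(2;2;1,1)}$-isometry, so that it is the restriction to $\mathcal{H}$ of a $\Gamma_{E(2;2;1,1)}$-unitary $(N_1,N_6,N_7)$ acting on some $\mathcal{K}\supseteq\mathcal{H}$. I would set $\mathbf{N}=(N_1,0,0,0,0,N_6,N_7)$; this is plainly a commuting $7$-tuple of normal operators because the zero operator is normal and commutes with everything. The key step is to identify its Taylor joint spectrum: since $N_1,N_6,N_7$ commute and are normal, inserting the four scalar coordinates $0$ gives $\sigma(\mathbf{N})=\{(\lambda_1,0,0,0,0,\lambda_6,\lambda_7):(\lambda_1,\lambda_6,\lambda_7)\in\sigma(N_1,N_6,N_7)\}$. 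For each such point the distinguished-boundary conditions give $\lambda_1=\overline{\lambda}_6\lambda_7$, $|\lambda_7|=1$ and $(\lambda_1,\lambda_6,\lambda_7)\in\Gamma_{E(2;2;1,1)}$; Lemma~\ref{tetra} then places $(\lambda_1,0,0,0,0,\lambda_6,\lambda_7)$ in $\Gamma_{E(3;3;1,1,1)}$, and the remaining defining relations of $K$ (namely $x_3=\overline{x}_4x_7$ and $x_5=\overline{x}_2x_7$) hold trivially because the relevant coordinates vanish. Hence $\sigma(\mathbf{N})\subseteq K$, so $\mathbf{N}$ is a $\Gamma_{E(3;3;1,1,1)}$-unitary; restricting it to the jointly invariant subspace $\mathcal{H}$ reproduces $(T_1,0,0,0,0,T_6,T_7)$, which is therefore a $\Gamma_{E(3;3;1,1,1)}$-isometry.

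For the converse, suppose $(T_1,0,0,0,0,T_6,T_7)$ is a $\Gamma_{E(3;3;1,1,1)}$-isometry, extended by a $\Gamma_{E(3;3;1,1,1)}$-unitary $(N_1,\dots,N_7)$ on $\mathcal{K}\supseteq\mathcal{H}$. I would consider the commuting normal triple $(N_1,N_6,N_7)$ and compute its spectrum by deleting the four middle coordinates: $\sigma(N_1,N_6,N_7)=\pi_{1,6,7}(\sigma(N_1,\dots,N_7))\subseteq\pi_{1,6,7}(K)$. To finish I must show $\pi_{1,6,7}(K)$ lies in the distinguished boundary of $\Gamma_{E(2;2;1,1)}$: a point of $K$ satisfies $\lambda_1=\overline{\lambda}_6\lambda_7$ and $|\lambda_7|=1$, while applying Theorem~\ref{fundam}, $(1)\Rightarrow(2)$, to the scalar point $(\lambda_1,\dots,\lambda_7)\in\Gamma_{E(3;3;1,1,1)}$ with $i=1$ gives $(\lambda_1,\lambda_6,\lambda_7)\in\Gamma_{E(2;2;1,1)}$. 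These are exactly the conditions characterizing the distinguished boundary of the tetrablock, so $(N_1,N_6,N_7)$ is a $\Gamma_{E(2;2;1,1)}$-unitary and its restriction to $\mathcal{H}$ is the claimed $\Gamma_{E(2;2;1,1)}$-isometry $(T_1,T_6,T_7)$.

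The main obstacle will be justifying the two spectral identities rather than the algebra of the boundary relations. Specifically, I must invoke the spectral mapping theorem for the Taylor joint spectrum of a commuting normal tuple under the polynomial (here linear) coordinate maps $\pi_{1,6,7}$ and its right inverse that inserts zeros; for commuting normals the Taylor spectrum coincides with the joint approximate point spectrum, which is the support of the joint spectral measure and is functorial under such maps, so both identities hold. A secondary point to verify carefully is that $\mathcal{H}$ remains jointly invariant for the transported tuples, immediate in both directions since the only new entries are $0$, and that the distinguished boundary of $\Gamma_{E(2;2;1,1)}$ equals $\{(x_1,x_6,x_7)\in\Gamma_{E(2;2;1,1)}:x_1=\overline{x}_6x_7,\ |x_7|=1\}$, which I would quote from the tetrablock literature already cited.
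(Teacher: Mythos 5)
Your proposal is correct, but it takes a genuinely different route from the paper's. The paper never touches Taylor spectra or unitary extensions directly: for the forward direction it first invokes [Theorem $5.7$, \cite{Bhattacharyya}] to see that $(T_1,T_6,T_7)$ is a $\Gamma_{E(2;2;1,1)}$-contraction with $T_7$ an isometry, then runs a von Neumann--type inequality through the embedding $\varphi(z_1,z_6,z_7)=(z_1,0,0,0,0,z_6,z_7)$ (whose range lies in $\Gamma_{E(3;3;1,1,1)}$, which is exactly the content of Lemma \ref{tetra}) to conclude that $(T_1,0,0,0,0,T_6,T_7)$ is a $\Gamma_{E(3;3;1,1,1)}$-contraction, and finally cites the characterization [Theorem $4.4$, \cite{apal2}] to upgrade the contraction to an isometry; the converse is a one-line application of that same characterization theorem. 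You instead argue from the definitions of unitary and isometry: you transport a unitary extension coordinatewise, inserting zero coordinates in one direction and deleting coordinates via the projection property of the Taylor spectrum in the other, and you verify membership of the spectrum in $K$, respectively in the distinguished boundary of the tetrablock, using Lemma \ref{tetra}, the scalar case of Theorem \ref{fundam} $(1)\Rightarrow(2)$, and the Abouhajar--White--Young description of the distinguished boundary from \cite{Abouhajar}. What the paper's route buys is brevity and complete avoidance of spectral-theoretic machinery, at the cost of leaning on two external characterization theorems; what your route buys is self-containedness at the level of the definitions and an explicit unitary extension, at the cost of needing the insertion/projection behaviour of the Taylor joint spectrum for commuting normal tuples --- a standard fact (via the joint spectral measure, or Taylor's projection property), which you correctly isolate as the main technical point rather than glossing over it. Both arguments are sound; one small caution on your converse is that the extension $(N_1,\dots,N_7)$ of $(T_1,0,0,0,0,T_6,T_7)$ need not have $N_2=\dots=N_5=0$, so it is essential, as you do, to work with general points of $K$ there rather than with points having vanishing middle coordinates.
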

	
	\begin{proof}
Suppose that  $(T_1, T_6, T_7)$ is a $\Gamma_{E(2; 2; 1, 1)}$-isometry. It follows from [Thorem $5.7$, \cite{Bhattacharyya}] that
$(T_1, T_6, T_7)$ is a $\Gamma_{E(2; 2; 1, 1)}$-contraction and $T_7$ is an isometry. Define the map $\varphi : \mathbb{C}^3 \to \mathbb{C}^7$ by \[\varphi(z_1, z_6, z_7) = (z_1, 0, 0, 0, 0, z_6, z_7).\]
We observe that for any $p \in \mathbb{C}[z_1, \dots, z_7],$ we have $p \circ \varphi \in \mathbb{C}[z_1, z_6, z_7].$ Thus, we have
		\begin{equation*}
			\begin{aligned}
				\|p(T_1, 0, 0, 0, 0, T_6, T_7)\|
				&=
				\|p \circ \varphi(T_1, T_6, T_7)\|\\
				&\leqslant
				\|p \circ \varphi\|_{\infty, \Gamma_{E(2; 2; 1, 1)}}\\
				&=
				\|p\|_{\infty, \varphi(\Gamma_{E(2; 2; 1, 1)})}\\
				&\leqslant
				\|p\|_{\infty, \Gamma_{E(3; 3; 1, 1, 1)}}.
			\end{aligned}
		\end{equation*}
This shows that $(T_1, 0, 0, 0, 0, T_6, T_7)$ is a $\Gamma_{E(3; 3; 1, 1, 1)}$-contraction. As $(T_1, 0, 0, 0, 0, T_6, T_7)$ is a $\Gamma_{E(3; 3; 1, 1, 1)}$-contraction and $T_7$ is an isometry, it yields from [Theorem $4.4$, \cite{apal2}] that  $(T_1, 0, 0, 0, 0, T_6, T_7)$ is a $\Gamma_{E(3; 3; 1, 1, 1)}$-isometry.

Conversely, suppose that  $(T_1, 0, 0, 0, 0, T_6, T_7)$ is a $\Gamma_{E(3; 3; 1, 1, 1)}$-isometry. Then by [Theorem $4.4$, \cite{apal2}], we conclude that $(T_1, T_6, T_7)$ is a $\Gamma_{E(2; 2; 1, 1)}$-isometry. This completes the proof.	
\end{proof}
\begin{rem}
By using a similar argument, one can easily prove that $(0,T_2, 0, 0, T_5, 0,T_7)$ is a $\Gamma_{E(3; 3; 1, 1, 1)}$-isometry if and only if $(T_2,T_5,T_7) \in \Gamma_{E(2; 2; 1, 1)}$-isometry. We can also show that $(T_3, T_4, T_7)$ is a $\Gamma_{E(2; 2; 1, 1)}$-isometry if and only if  $(0, 0, T_3, T_4, 0, 0, T_7)$ is a $\Gamma_{E(3; 3; 1, 1, 1)}$-isometry.\end{rem}	
We will now produce an example of  $\Gamma_{E(3; 3; 1, 1, 1)}$-contraction  which satisfies all conditions in Proposition \ref{Prop Partial 1}. The following example is found in section $2$ in \cite{Mandal}.
\begin{ex}\label{example1} Let $H^2(\mathbb D)$  denotes the Hardy space over the unit disc $\mathbb D.$ Consider the following triple of commuting operators on $H^2(\mathbb D)\oplus H^2(\mathbb D):$ 
$$(A,B,P)=\left( \left(\begin{smallmatrix} 0 & 0 \\I_{H^2} & 0 \end{smallmatrix}\right), \left(\begin{smallmatrix} M_z & 0 \\  0 & M_z \end{smallmatrix}\right), \left(\begin{smallmatrix} I_{H^2} & 0 \\ 0 & I_{H^2}\end{smallmatrix}\right)\right),$$ where $M_z$ is a multiplication operator on $H^2.$ Clearly, $I-M_z^*M_z=0.$  Let 
$$T_1=\frac{1}{3}(A+B+P)=\frac{1}{3}\left(\begin{smallmatrix} I_{H^2}+M_z & 0 \\ I_{H^2} & I_{H^2}+M_z \end{smallmatrix}\right), T_2=0=T_3=T_4=T_5, T_6=\frac{1}{3}(AB+BP+AP)=\frac{1}{3}\left(\begin{smallmatrix} M_z & 0\\I_{H^2}+M_z  & M_z \end{smallmatrix}\right)$$ and $ T_7=ABP=\left(\begin{smallmatrix} 0 & 0 \\ M_z & 0  \end{smallmatrix}\right).$ By Remark \ref{isso}, we conclude that 
$(T_1, T_6, T_7)$ is a $\Gamma_{E(2; 2; 1, 1)}$-isometry and hence it follows from Proposition \ref{isso1} that $(T_1, 0, 0, 0, 0, T_6, T_7)$ is a $\Gamma_{E(3; 3; 1, 1, 1)}$-isometry.  Note that $$D^2_{T_7}=\left(\begin{smallmatrix} I_{H^2} & 0 \\ 0 & I_{H^2} \end{smallmatrix}\right)-\left(\begin{smallmatrix} 0 & 0 \\ M_z & 0  \end{smallmatrix}\right)^*\left(\begin{smallmatrix} 0 & 0 \\ M_z & 0  \end{smallmatrix}\right)=\left(\begin{smallmatrix} 0 & 0 \\  0 & I_{H^2} \end{smallmatrix}\right)=D_{T_7}.$$ 
Let us consider $$(F_1,F_2,F_3,F_4,F_5,F_6)=(\frac{I_{H^2}+M_z}{3},0,0,0,0,\frac{M_z}{3}).$$  One can easily check that all the conditions of the Proposition \ref{Prop Partial 1} are satisfied.

\end{ex}
We produce an example of a $\Gamma_{E(3; 3; 1, 1, 1)}$-contraction that possesses a $\Gamma_{E(3; 3; 1, 1, 1)}$-isometric dilation but the condition $(2)(b)$ in Proposition \ref{Prop Partial 1} is not fulfilled, namely, $[F_{7-i}^*, F_j] \ne [F_{7-j}^*, F_i] $ for some $i,j$ with $1\leq i ,j\leq 6.$ In summary, we  conclude that the set of sufficient conditions for the existence of a $\Gamma_{E(3; 3; 1, 1, 1)}$-isometric dilation presented in Theorem \ref{conddilation} are generally not necessary, even when the $\Gamma_{E(3; 3; 1, 1, 1)}$-contraction $\textbf{T}=(T_1,\dots, T_7)$ has a special form, where $T_7$ is a partial isometry on $\mathcal H.$
\begin{ex}\label{Exam 1}
Let $\mathcal{H} = H^2(\mathbb{D}) \oplus H^2(\mathbb{D}) \oplus H^2(\mathbb{D})$ and $T_1,T_2$ be two operators defined by
		\begin{equation*}
			\begin{aligned}
				T_1 &=
				\begin{pmatrix}
					0 & I_{H^2} & 0\\
					0 & 0 & I_{H^2}\\
					0 & 0 & 0
				\end{pmatrix}
				~\text{and}~
				T_2 =
				\begin{pmatrix}
					M_z & 0 & 0\\
					0 & M_z & 0\\
					0 & 0 & M_z
				\end{pmatrix}.
			\end{aligned}
		\end{equation*}
Clearly, $T_1$ and $T_2$ are commuting contractions on $\mathcal H.$ By Theorem \ref{contr}, we conclude that the $7$-tuple of operators $\textbf{T} = (T_1, T_2, T_1T_2, T_1T_2, T_1^2T_2, T_1T_2^2, T_1^2T_2^2)$  has $\Gamma_{E(3; 3; 1, 1, 1)}$-isometric dilation. Note that \begin{equation}
			\begin{aligned}
				T_1 &=
				\begin{pmatrix}
					0 & I_{H^2} & 0\\
					0 & 0 & I_{H^2}\\
					0 & 0 & 0
				\end{pmatrix},
				T_2 = \begin{pmatrix}
					M_z & 0 & 0\\
					0 & M_z & 0\\
					0 & 0 & M_z
				\end{pmatrix},
				T_3 =T_1T_2= \begin{pmatrix}
					0 & M_z & 0\\
					0 & 0 & M_z\\
					0 & 0 & 0
				\end{pmatrix},
				T_4=T_1T_2 = \begin{pmatrix}
					0 & M_z & 0\\
					0 & 0 & M_z\\
					0 & 0 & 0
				\end{pmatrix},\\
				&\hspace{2cm}
				T_5 =T_1^2T_2= \begin{pmatrix}
					0 & 0 & M_z\\
					0 & 0 & 0\\
					0 & 0 & 0
				\end{pmatrix},
				T_6=T_1T_2^2 = \begin{pmatrix}
					0 & M_z^2 & 0\\
					0 & 0 & M_z^2\\
					0 & 0 & 0
				\end{pmatrix},
				T_7 =T_1^2T_2^2= \begin{pmatrix}
					0 & 0 & M_z^2\\
					0 & 0 & 0\\
					0 & 0 & 0
				\end{pmatrix}.
			\end{aligned}
		\end{equation}
Since $M_z$ is an isometry, it implies that $M_z^2$ is also an isometry. Since $M_z^2$ is an isometry, it follows that  $T_7$ is a partial isometry. Observe that \begin{equation}
			\begin{aligned}
				D^2_{T_7} = I - T^*_7T_7 &=
				\begin{pmatrix}
					I_{H^2} & 0 & 0\\
					0 & I_{H^2} & 0\\
					0 & 0 & I_{H^2}
				\end{pmatrix} -
				\begin{pmatrix}
					0 & 0 & 0\\
					0 & 0 & 0\\
					M_z^{*2} & 0 & 0
				\end{pmatrix}\begin{pmatrix}
					0 & 0 & M_z^2\\
					0 & 0 & 0\\
					0 & 0 & 0
				\end{pmatrix} = 
				\begin{pmatrix}
					I_{H^2} & 0 & 0\\
					0 & I_{H^2} & 0\\
					0 & 0 & 0
				\end{pmatrix} = D_{T_7}.
			\end{aligned}
		\end{equation}
Let us set  \begin{equation}
			\begin{aligned}
				\left(F_1F_2,F_3,F_4,F_5,F_6\right)=\left(\begin{pmatrix}
					0 & I_{H^2}\\
					0 & 0
				\end{pmatrix},\begin{pmatrix}
					M_z & 0\\
					0 & M_z
				\end{pmatrix},\begin{pmatrix}
					0 & M_z\\
					0 & 0
				\end{pmatrix},
				\begin{pmatrix}
					0 & M_z\\
					0 & 0
				\end{pmatrix},
			\begin{pmatrix}
					0 & 0\\
					0 & 0
				\end{pmatrix},
				\begin{pmatrix}
					0 & M_z^2\\
					0 & 0
				\end{pmatrix}\right).
			\end{aligned}
		\end{equation}
		Notice that
		\begin{equation*}
			\begin{aligned}
				T_1 - T^*_6T_7 &=
				\begin{pmatrix}
					0 & I_{H^2} & 0\\
					0 & 0 & I_{H^2}\\
					0 & 0 & 0
				\end{pmatrix} -
				\begin{pmatrix}
					0 & 0 & 0\\
					M^{*2}_z & 0 & 0\\
					0 & M^{*2}_z & 0
				\end{pmatrix}
				\begin{pmatrix}
					0 & 0 & M^2_z\\
					0 & 0 & 0\\
					0 & 0 & 0
				\end{pmatrix}\\
				&=
				\begin{pmatrix}
				0 & I_{H^2} & 0\\
				0 & 0 & 0\\
				0 & 0 & 0
				\end{pmatrix}\\&
				= \begin{pmatrix}
				I_{H^2} & 0 & 0\\
				0 & I_{H^2} & 0\\
				0 & 0 & 0
				\end{pmatrix}
				\begin{pmatrix}
				0 & I_{H^2} & 0\\
				0 & 0 & 0\\
				0 & 0 & 0
				\end{pmatrix}
				\begin{pmatrix}
				I_{H^2} & 0 & 0\\
				0 & I_{H^2} & 0\\
				0 & 0 & 0
				\end{pmatrix}\\
				&=
				D_{T_7}F_1D_{T_7},
			\end{aligned}
		\end{equation*}
		\begin{equation*}
			\begin{aligned}
				T_6 - T^*_1T_7 &=
				\begin{pmatrix}
					0 & M^2_z & 0\\
					0 & 0 & M^2_z\\
					0 & 0 & 0
				\end{pmatrix} -
				\begin{pmatrix}
					0 & 0 & 0\\
					I_{H^2} & 0 & 0\\
					0 & I_{H^2} & 0
				\end{pmatrix}
				\begin{pmatrix}
					0 & 0 & M^2_z\\
					0 & 0 & 0\\
					0 & 0 & 0
				\end{pmatrix}\\
				&=
				\begin{pmatrix}
					0 & M^2_z & 0\\
					0 & 0 & 0\\
					0 & 0 & 0
				\end{pmatrix}\\&
				= \begin{pmatrix}
					I_{H^2} & 0 & 0\\
					0 & I_{H^2} & 0\\
					0 & 0 & 0
				\end{pmatrix}
				\begin{pmatrix}
					0 & M^2_z & 0\\
					0 & 0 & 0\\
					0 & 0 & 0
				\end{pmatrix}
				\begin{pmatrix}
					I_{H^2} & 0 & 0\\
					0 & I_{H^2} & 0\\
					0 & 0 & 0
				\end{pmatrix}\\
				&=
				D_{T_7}F_6D_{T_7}.
			\end{aligned}
		\end{equation*}
%		\begin{equation*}
%			\begin{aligned}
%				T_2 - T^*_5T_7 &=
%				\begin{pmatrix}
%					M_z & 0 & 0\\
%					0 & M_z & 0\\
%					0 & 0 & M_z
%				\end{pmatrix} -
%				\begin{pmatrix}
%					0 & 0 & 0\\
%					0 & 0 & 0\\
%					M^*_z & 0 & 0
%				\end{pmatrix}
%				\begin{pmatrix}
%					0 & 0 & M^2_z\\
%					0 & 0 & 0\\
%					0 & 0 & 0
%				\end{pmatrix}\\
%				&=
%				\begin{pmatrix}
%					M_z & 0 & 0\\
%					0 & M_z & 0\\
%					0 & 0 & 0
%				\end{pmatrix}
%				= \begin{pmatrix}
%					I_{H^2} & 0 & 0\\
%					0 & I_{H^2} & 0\\
%					0 & 0 & 0
%				\end{pmatrix}
%				\begin{pmatrix}
%					M_z & 0 & 0\\
%					0 & M_z & 0\\
%					0 & 0 & 0
%				\end{pmatrix}
%				\begin{pmatrix}
%					I_{H^2} & 0 & 0\\
%					0 & I_{H^2} & 0\\
%					0 & 0 & 0
%				\end{pmatrix}\\
%				&=
%				D_{T_7}F_2D_{T_7},
%			\end{aligned}
%		\end{equation*}
Similarly we can also show that $$T_2 - T^*_5T_7=D_{T_7}F_2D_{T_7}, T_5 - T^*_2T_7=D_{T_7}F_5D_{T_7},T_3 - T^*_4T_7=D_{T_7}F_3D_{T_7}~{\rm{and}}~~T_4 - T^*_3T_7=D_{T_7}F_4D_{T_7}.$$
One can also easily  check that $F_iF_j=F_jF_i$ for $1\leq i,j\leq 6.$  We observe that
		\begin{equation*}
			\begin{aligned}
				[F^*_1, F_1] =
				\begin{pmatrix}
					- I_{H^2} & 0\\
					0 & I_{H^2}
				\end{pmatrix} &\ne
				\begin{pmatrix}
					- M^2_zM^{*2}_z & 0\\
					0 & I_{H^2}
				\end{pmatrix} = [F^*_6, F_6],
			\end{aligned}
		\end{equation*}
		\begin{equation*}
			\begin{aligned}
				[F^*_2, F_2] =
				\begin{pmatrix}
					I_{H^2} - M_zM^*_z & 0\\
					0 & I_{H^2} - M_zM^*_z
				\end{pmatrix} &\ne
				\begin{pmatrix}
					0 & 0\\
					0 & 0
				\end{pmatrix} = [F^*_5, F_5]
			\end{aligned}
		\end{equation*}
		$${\rm{and}}$$
		\begin{equation*}
			\begin{aligned}
				[F^*_3, F_3] =
				\begin{pmatrix}
					- M_zM^*_z & 0\\
					0 & I_{H^2}
				\end{pmatrix} &=
			 	[F^*_4, F_4].
			\end{aligned}
		\end{equation*}
This implies that the condition in $(2)(b)$ in Proposition \ref{Prop Partial 1} is not satisfied, namely, $[F_{7-i}^*, F_j] \ne [F_{7-j}^*, F_i] $ for some $i,j$ with $1\leq i ,j\leq 6.$ Thus, we deduce that the set of sufficient conditions for the existence of a $\Gamma_{E(3; 3; 1, 1, 1)}$-isometric dilation presented in Theorem \ref{conddilation} are  not necessary in general.
	\end{ex}
We only state the following lemma. It's proof is similar to that of the Lemma \ref{tetra}. Therefore, we skip the proof.	
\begin{lem}
$\textbf{x}=(x_1,0,x_3,0,y_2)\in \Gamma_{E(3;2;1,2)}$ if and only if $(x_1,y_2,x_3)\in \Gamma_{E(2; 2; 1, 1)}$.
\end{lem}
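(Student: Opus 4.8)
The plan is to follow the proof of Lemma~\ref{tetra} line by line, substituting for the three-block domain $\Gamma_{E(3;3;1,1,1)}$ the two-block quotient $\Gamma_{E(3;2;1,2)}$ throughout. The engine of the argument in Lemma~\ref{tetra} is the slice description of [Theorem 2.9, \cite{apal1}], which expresses membership in the seven-dimensional domain through a one-parameter family of tetrablock conditions; I would invoke the analogous slice description for $\Gamma_{E(3;2;1,2)}$ from \cite{apal1}. The key structural fact to check first is that imposing the two vanishing coordinates $x_2=0$ and $y_1=0$ forces the resulting slice to be governed by the single M\"obius quotient $\tfrac{y_2-zx_3}{1-x_1 z}$, exactly as the vanishing of the corresponding middle coordinates produced $\tfrac{x_6-zx_7}{1-x_1 z}$ in Lemma~\ref{tetra}.

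Granting this reduction, the forward implication is immediate: if $(x_1,0,x_3,0,y_2)\in\Gamma_{E(3;2;1,2)}$ then the associated slice $\bigl(0,0,\tfrac{y_2-zx_3}{1-x_1 z}\bigr)$ lies in the tetrablock for every $z\in\mathbb{D}$, whence $\bigl|\tfrac{y_2-zx_3}{1-x_1 z}\bigr|\le 1$ on $\mathbb{D}$ by [Theorem 2.4, \cite{Abouhajar}], and a second application of [Theorem 2.4, \cite{Abouhajar}] --- now reading this quotient as the defining slice of the tetrablock point $(x_1,y_2,x_3)$ --- gives $(x_1,y_2,x_3)\in\Gamma_{E(2;2;1,1)}$. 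For the converse I would start from $(x_1,y_2,x_3)\in\Gamma_{E(2;2;1,1)}$, extract $\bigl|\tfrac{y_2-zx_3}{1-x_1 z}\bigr|\le 1$ (in particular $1-x_1 z\neq 0$ on $\mathbb{D}$, so the upper-left datum is admissible), and then run the $\Psi$-function estimate attached to $\Gamma_{E(3;2;1,2)}$: as in \eqref{psi23}, the vanishing of $x_2$ and $y_1$ pulls a factor of modulus strictly below one out of the numerator and reduces the denominator to $1-x_1 z$, so the relevant $H^\infty$-norm is strictly dominated by $\sup_{z}\bigl|\tfrac{y_2-zx_3}{1-x_1 z}\bigr|\le 1$, and membership in $\Gamma_{E(3;2;1,2)}$ follows.

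The main obstacle is locating and verifying the correct slice and $\Psi$-function for $\Gamma_{E(3;2;1,2)}$: because the lower-right block has size two, the pair $(y_1,y_2)$ is symmetrized-bidisc data rather than a single determinant, so one must confirm that after setting $x_2=y_1=0$ the a priori two-variable quotient genuinely degenerates to the one-variable expression $\tfrac{y_2-zx_3}{1-x_1 z}$ and that the norm estimate survives this degeneration. I note, however, that the converse direction admits a completely self-contained proof using only results already in this excerpt: by Lemma~\ref{tetra}, $(x_1,y_2,x_3)\in\Gamma_{E(2;2;1,1)}$ gives $(x_1,0,0,0,0,y_2,x_3)\in\Gamma_{E(3;3;1,1,1)}$, and applying [Theorem 2.48, \cite{apal1}] (the map $\pi_\eta$ of Theorem~\ref{Thm Exam 2}) at $\eta=1$ sends this point to $(x_1,0,x_3,0,y_2)\in\Gamma_{E(3;2;1,2)}$, which is the desired conclusion; the forward direction is the part genuinely requiring the slice machinery above.
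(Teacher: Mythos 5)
The paper in fact gives no proof of this lemma at all: it states only that the proof is ``similar to that of Lemma \ref{tetra}'' and skips it. Your plan for the forward implication is therefore exactly the argument the authors intend, and your one worry about it is resolvable: for the two-block structure the slice of $(x_1,x_2,x_3,y_1,y_2)$ at $z\in\mathbb{D}$ is the symmetrized-bidisc point $\left(\frac{y_1-zx_2}{1-zx_1},\ \frac{y_2-zx_3}{1-zx_1}\right)$ (a direct Schur-complement computation: the trace of the linear-fractional compression is $\frac{y_1-zx_2}{1-zx_1}$ and its determinant is $\frac{y_2-zx_3}{1-zx_1}$), and since a point $(0,p)$ lies in the closed symmetrized bidisc precisely when $|p|\leq 1$, setting $x_2=y_1=0$ collapses membership to the single condition $\left|\frac{y_2-zx_3}{1-x_1z}\right|\leq 1$ on $\mathbb{D}$, which is the tetrablock criterion of [Theorem $2.4$, \cite{Abouhajar}]. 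Where you genuinely depart from the paper is the converse: instead of reproducing the $\Psi$-function estimate in the style of \eqref{psi23}, you deduce it from Lemma \ref{tetra} together with [Theorem $2.48$, \cite{apal1}] applied at $\eta=1$, using $\pi_1(x_1,0,0,0,0,y_2,x_3)=(x_1,0,x_3,0,y_2)$. That route is fully rigorous, uses only results already quoted in this paper, and avoids having to locate the $H^\infty$ characterization of $\Gamma_{E(3;2;1,2)}$ for that half; its only limitation is the one you yourself note, namely that a single value of $\eta$ cannot run the implication backwards (Theorem $2.48$ needs all $\eta\in\overline{\mathbb{D}}$), so the slice argument remains indispensable for the forward half.
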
	
\begin{rem}
By using a similar argument, one can show that $(0,x_2,x_3,y_1,0)\in \Gamma_{E(3; 3; 1, 1, 1)}$ if and only if $(\frac{x_2}{2},\frac{y_1}{2},x_3) \in \Gamma_{E(2; 2; 1, 1)}$.
\end{rem}
We only state the following Proposition. The proof is analogous to that of Proposition \ref{isso1}. Consequently, we omit the proof.

\begin{prop}
		Let $(S_1, \tilde{S}_2, S_3)$ be a triple of commuting bounded operators on a Hilbert space $\mathcal{H}$. Then
		 $(S_1, \tilde{S}_2, S_3)$ is a $\Gamma_{E(2; 2; 1, 1)}$-isometry if and only if $(S_1, 0, S_3, 0, \tilde{S}_2)$ is a $\Gamma_{E(3; 2; 1, 2)}$-isometry.
\end{prop}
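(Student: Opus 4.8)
The plan is to mirror the argument of Proposition \ref{isso1}, replacing Lemma \ref{tetra} by the preceding Lemma (the one asserting that $(x_1,0,x_3,0,y_2)\in\Gamma_{E(3;2;1,2)}$ if and only if $(x_1,y_2,x_3)\in\Gamma_{E(2;2;1,1)}$) and replacing [Theorem~$4.4$, \cite{apal2}] by its $\Gamma_{E(3;2;1,2)}$-counterpart [Theorem~$4.5$, \cite{apal2}]. First I would introduce the polynomial map $\varphi:\mathbb C^3\to\mathbb C^5$ defined by $\varphi(z_1,z_2,z_3)=(z_1,0,z_3,0,z_2)$, where the input is read as the tetrablock triple $(x_1,y_2,x_3)$, so that $\varphi(S_1,\tilde{S}_2,S_3)=(S_1,0,S_3,0,\tilde{S}_2)$. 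The preceding Lemma gives exactly $\varphi(\Gamma_{E(2;2;1,1)})\subseteq\Gamma_{E(3;2;1,2)}$, which is the geometric input driving the norm comparison.

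For the forward implication, I would assume $(S_1,\tilde{S}_2,S_3)$ is a $\Gamma_{E(2;2;1,1)}$-isometry. By [Theorem~$5.7$, \cite{Bhattacharyya}] it is then a $\Gamma_{E(2;2;1,1)}$-contraction with $S_3$ an isometry. Since $\varphi$ is a polynomial map, for any $p\in\mathbb C[z_1,\dots,z_5]$ the composite $p\circ\varphi$ lies in $\mathbb C[z_1,z_2,z_3]$, and one obtains the chain
\begin{align*}
\|p(S_1,0,S_3,0,\tilde{S}_2)\|
&=\|(p\circ\varphi)(S_1,\tilde{S}_2,S_3)\|
\le\|p\circ\varphi\|_{\infty,\Gamma_{E(2;2;1,1)}}\\
&=\|p\|_{\infty,\varphi(\Gamma_{E(2;2;1,1)})}
\le\|p\|_{\infty,\Gamma_{E(3;2;1,2)}},
\end{align*}
where the first inequality uses that $(S_1,\tilde{S}_2,S_3)$ is a tetrablock contraction and the last uses $\varphi(\Gamma_{E(2;2;1,1)})\subseteq\Gamma_{E(3;2;1,2)}$. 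This shows $(S_1,0,S_3,0,\tilde{S}_2)$ is a $\Gamma_{E(3;2;1,2)}$-contraction; since its third coordinate $S_3$ is an isometry, [Theorem~$4.5$, \cite{apal2}] upgrades it to a $\Gamma_{E(3;2;1,2)}$-isometry.

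For the converse, I would assume $(S_1,0,S_3,0,\tilde{S}_2)$ is a $\Gamma_{E(3;2;1,2)}$-isometry, hence in particular a $\Gamma_{E(3;2;1,2)}$-contraction with $S_3$ an isometry. The implication $(1)\Rightarrow(2)$ of Theorem \ref{s1s2} then gives that $(S_1,\tilde{S}_2,S_3)$ is a $\Gamma_{E(2;2;1,1)}$-contraction, and since $S_3$ is an isometry, [Theorem~$5.7$, \cite{Bhattacharyya}] promotes it to a $\Gamma_{E(2;2;1,1)}$-isometry, completing the equivalence.

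The polynomial-algebra bookkeeping and the norm chain are routine. The main obstacle is the two \emph{upgrade} steps in which a contraction is promoted to an isometry; these are precisely where the hypothesis that the distinguished coordinate $S_3$ is an isometry, combined with the structure theorems [Theorem~$4.5$, \cite{apal2}] and [Theorem~$5.7$, \cite{Bhattacharyya}], becomes indispensable. One must also take care with the coordinate bookkeeping so that the second and fourth $\Gamma_{E(3;2;1,2)}$-coordinates genuinely vanish under $\varphi$, exactly matching the hypotheses of the preceding Lemma.
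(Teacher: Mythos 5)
Your proposal is correct and is essentially the proof the paper intends: the paper omits the argument, declaring it analogous to Proposition \ref{isso1}, and your proof is exactly that analogue, with the map $\varphi(z_1,z_2,z_3)=(z_1,0,z_3,0,z_2)$, the preceding lemma supplying $\varphi(\Gamma_{E(2;2;1,1)})\subseteq\Gamma_{E(3;2;1,2)}$, and [Theorem $4.5$, \cite{apal2}] together with [Theorem $5.7$, \cite{Bhattacharyya}] handling the contraction-to-isometry upgrades. The only cosmetic difference is in the converse, where you route through Theorem \ref{s1s2} $(1)\Rightarrow(2)$ and \cite{Bhattacharyya} instead of appealing to the characterization in [Theorem $4.5$, \cite{apal2}] in one step, which is an equally valid instance of the same approach.
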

\begin{rem}
By using a similar argument, one can easily demonstrate that $\left(\frac{S_2}{2}, \frac{\tilde{S}_1}{2}, S_3\right)$ is a $\Gamma_{E(2; 2; 1, 1)}$-isometry if and only if $(0, S_2, S_3, \tilde{S}_1, 0)$ is a $\Gamma_{E(3; 2; 1, 2)}$-isometry.
\end{rem}
We discuss an example of a $\Gamma_{E(3; 2; 1, 2)}$-contraction that possesses a $\Gamma_{E(3; 2; 1, 2)}$-isometric dilation by which one of the conditions in $(2)$ of the Proposition \ref{Prop Partial 1} is not satisfied. As a result, we conclude that the set of sufficient conditions for the existence of a $\Gamma_{E(3; 2; 1, 2)}$-isometric dilation presented in Theorem \ref{condilation1} are generally not necessary, even when the $\Gamma_{E(3; 2; 1, 2)}$-contraction $\textbf{S} = (S_1, S_2, S_3, \tilde{S}_1, \tilde{S}_2)$ has a special form, where $S_3$ is a partial isometry on $\mathcal H.$

\begin{ex}\label{Exam 2}
		Consider the Hilbert space $\mathcal{H}$, along with the operators $T_1$ and $T_2$ as discussed in Example \ref{Exam 1}.  It follows from Theorem \ref{Thm Exam 2} that $$\textbf{S} = (S_1, S_2, S_3, \tilde{S}_1, \tilde{S}_2) = (T_1, T_1T_2 + T_1^2T_2, T_1^2T_2^2, T_2 + T_1T_2, T_1T_2^2)$$ is a $\Gamma_{E(3; 2; 1, 2)}$-isometry. Note that
		\small{\begin{equation}
			\begin{aligned}
				&S_1 =
				\left(\begin{smallmatrix}
					0 & I_{H^2} & 0\\
					0 & 0 & I_{H^2}\\
					0 & 0 & 0
				\end{smallmatrix}\right),
				S_2 =
				\left(\begin{smallmatrix}
					0 & M_z & M_z\\
					0 & 0 & M_z\\
					0 & 0 & 0
				\end{smallmatrix}\right),
				S_3 =
				\left(\begin{smallmatrix}
					0 & 0 & M^2_z\\
					0 & 0 & 0\\
					0 & 0 & 0
				\end{smallmatrix}\right),
				\tilde{S}_1 =
				\left(\begin{smallmatrix}
					M_z & M_z & 0\\
					0 & M_z & M_z\\
					0 & 0 & M_z
				\end{smallmatrix}\right) ~{\rm{and}}~
				\tilde{S}_2 =
				\left(\begin{smallmatrix}
					0 & M^2_z & 0\\
					0 & 0 & M^2_z\\
					0 & 0 & 0
				\end{smallmatrix}\right).
			\end{aligned}
		\end{equation}}
		
Clearly, $S_3$ is a partial isometry. Observe that \begin{equation}
			\begin{aligned}
				D^2_{S_3} = I - S^*_3S_3 &=
				\begin{pmatrix}
					I_{H^2} & 0 & 0\\
					0 & I_{H^2} & 0\\
					0 & 0 & I_{H^2}
				\end{pmatrix} -
				\begin{pmatrix}
					0 & 0 & 0\\
					0 & 0 & 0\\
					M^{*2}_z & 0 & 0
				\end{pmatrix}\begin{pmatrix}
					0 & 0 & M^2_z\\
					0 & 0 & 0\\
					0 & 0 & 0
				\end{pmatrix} = 
				\begin{pmatrix}
					I_{H^2} & 0 & 0\\
					0 & I_{H^2} & 0\\
					0 & 0 & 0
				\end{pmatrix} = D_{S_3}.
			\end{aligned}
		\end{equation}
Let us set  \begin{equation}
			\begin{aligned}
				(G_1, 2G_2, 2\tilde{G}_1, \tilde{G}_2)=\left(
				\begin{pmatrix}
					0 & I_{H^2}\\
					0 & 0
				\end{pmatrix},
				\begin{pmatrix}
					0 & M_z\\
					0 & 0
				\end{pmatrix},
				\begin{pmatrix}
					M_z & M_z\\
					0 & M_z
				\end{pmatrix},
				\begin{pmatrix}
					0 & M^2_z\\
					0 & 0
				\end{pmatrix}\right).
			\end{aligned}
		\end{equation}
		Observe that
		\begin{equation*}
			\begin{aligned}
				S_1 - \tilde{S}^*_2S_3 &=
				\begin{pmatrix}
					0 & I_{H^2} & 0\\
					0 & 0 & I_{H^2}\\
					0 & 0 & 0
				\end{pmatrix} -
				\begin{pmatrix}
					0 & 0 & 0\\
					M^{*2}_z & 0 & 0\\
					0 & M^{*2}_z & 0
				\end{pmatrix}
				\begin{pmatrix}
					0 & 0 & M^2_z\\
					0 & 0 & 0\\
					0 & 0 & 0
				\end{pmatrix}\\
				&=
				\begin{pmatrix}
					0 & I_{H^2} & 0\\
					0 & 0 & 0\\
					0 & 0 & 0
				\end{pmatrix}
				\\&= \begin{pmatrix}
					I_{H^2} & 0 & 0\\
					0 & I_{H^2} & 0\\
					0 & 0 & 0
				\end{pmatrix}
				\begin{pmatrix}
					0 & I_{H^2} & 0\\
					0 & 0 & 0\\
					0 & 0 & 0
				\end{pmatrix}
				\begin{pmatrix}
					I_{H^2} & 0 & 0\\
					0 & I_{H^2} & 0\\
					0 & 0 & 0
				\end{pmatrix}\\
				&=
				D_{S_3}G_1D_{S_3},
			\end{aligned}
		\end{equation*}
		\begin{equation*}
			\begin{aligned}
				\tilde{S}_2 - S^*_1S_3 &=
				\begin{pmatrix}
					0 & M^2_z & 0\\
					0 & 0 & M^2_z\\
					0 & 0 & 0
				\end{pmatrix} -
				\begin{pmatrix}
					0 & 0 & 0\\
					I_{H^2} & 0 & 0\\
					0 & I_{H^2} & 0
				\end{pmatrix}
				\begin{pmatrix}
					0 & 0 & M^2_z\\
					0 & 0 & 0\\
					0 & 0 & 0
				\end{pmatrix}\\
				&=
				\begin{pmatrix}
					0 & M^2_z & 0\\
					0 & 0 & 0\\
					0 & 0 & 0
				\end{pmatrix}\\&
				= \begin{pmatrix}
					I_{H^2} & 0 & 0\\
					0 & I_{H^2} & 0\\
					0 & 0 & 0
				\end{pmatrix}
				\begin{pmatrix}
					0 & M^2_z & 0\\
					0 & 0 & 0\\
					0 & 0 & 0
				\end{pmatrix}
				\begin{pmatrix}
					I_{H^2} & 0 & 0\\
					0 & I_{H^2} & 0\\
					0 & 0 & 0
				\end{pmatrix}\\
				&=
				D_{S_3}\tilde{G}_2D_{S_3},
			\end{aligned}
		\end{equation*}
		
		\begin{equation*}
			\begin{aligned}
				S_2 - \tilde{S}^*_1S_3 &=
				\begin{pmatrix}
					0 & M_z & M_z\\
					0 & 0 & M_z\\
					0 & 0 & 0
				\end{pmatrix} -
				\begin{pmatrix}
					M^*_z & 0 & 0\\
					M^*_z & M^*_z & 0\\
					0 & M^*_z & M^*_z
				\end{pmatrix}
				\begin{pmatrix}
					0 & 0 & M^2_z\\
					0 & 0 & 0\\
					0 & 0 & 0
				\end{pmatrix}\\
				&=
				\begin{pmatrix}
					0 & M_z & 0\\
					0 & 0 & 0\\
					0 & 0 & 0
				\end{pmatrix}
				\\&= \begin{pmatrix}
					I_{H^2} & 0 & 0\\
					0 & I_{H^2} & 0\\
					0 & 0 & 0
				\end{pmatrix}
				\begin{pmatrix}
					0 & M_z & 0\\
					0 & 0 & 0\\
					0 & 0 & 0
				\end{pmatrix}
				\begin{pmatrix}
					I_{H^2} & 0 & 0\\
					0 & I_{H^2} & 0\\
					0 & 0 & 0
				\end{pmatrix}\\
				&=
				D_{S_3}2G_2D_{S_3},
			\end{aligned}
		\end{equation*} $${\rm{and}}$$
		\begin{equation*}
			\begin{aligned}
				\tilde{S}_1 - S^*_2S_3 &=
				\begin{pmatrix}
					M_z & M_z & 0\\
					0 & M_z & M_z\\
					0 & 0 & M_z
				\end{pmatrix} -
				\begin{pmatrix}
					0 & 0 & 0\\
					M^*_z & 0 & 0\\
					M^*_z & M^*_z & 0
				\end{pmatrix}
				\begin{pmatrix}
					0 & 0 & M^2_z\\
					0 & 0 & 0\\
					0 & 0 & 0
				\end{pmatrix}\\
				&=
				\begin{pmatrix}
					M_z & M_z & 0\\
					0 & M_z & 0\\
					0 & 0 & 0
				\end{pmatrix}
				\\&= \begin{pmatrix}
					I_{H^2} & 0 & 0\\
					0 & I_{H^2} & 0\\
					0 & 0 & 0
				\end{pmatrix}
				\begin{pmatrix}
					M_z & M_z & 0\\
					0 & M_z & 0\\
					0 & 0 & 0
				\end{pmatrix}
				\begin{pmatrix}
					I_{H^2} & 0 & 0\\
					0 & I_{H^2} & 0\\
					0 & 0 & 0
				\end{pmatrix}\\
				&=
				D_{S_3}2\tilde{G}_1D_{S_3}.
			\end{aligned}
		\end{equation*}
One can easily verify that  $G_1, G_2, \tilde{G}_1, \tilde{G}_2$ commute with each other. We notice that
		\begin{equation}\label{g11}
			\begin{aligned}
				[G^*_1, G_1] =
				\begin{pmatrix}
					- I_{H^2} & 0\\
					0 & I_{H^2}
				\end{pmatrix} &\ne
				\begin{pmatrix}
					- M^2_zM^{*2}_z & 0\\
					0 & I_{H^2}
				\end{pmatrix} = [\tilde{G}^*_2, \tilde{G}_2],
			\end{aligned}
		\end{equation}
		$${\rm{and}}$$
		\begin{equation}\label{g12}
			\begin{aligned}
				[2G^*_2, 2G_2] =
				\begin{pmatrix}
					I_{H^2} & I_{H^2}\\
					I_{H^2} & 2I_{H^2}
				\end{pmatrix} &\ne
				\begin{pmatrix}
					I_{H^2} - 2M_zM^*_z & I_{H^2} - M_zM^*_z\\
					I_{H^2} - M_zM^*_z & 2I_{H^2} - M_zM^*_z
				\end{pmatrix} = [2\tilde{G}^*_1, 2\tilde{G}_1].
			\end{aligned}
		\end{equation}
Hence from \eqref{g11} and \eqref{g12}, we see that the conditions in $(2)(d)$ and $(2)(e)$ in Proposition \ref{Prop Partial 2} are not satisfied. Thus, we conclude that  the set of sufficient conditions for the existence of $\Gamma_{E(3; 2; 1, 2)}$-isometric dilation presented in Theorem \ref{condilation1} are not necessary in general.
	\end{ex}
	
	\section{Families of $\Gamma_{E(3; 3; 1, 1, 1)}$-Contractions and $\Gamma_{E(3; 2; 1, 2)}$-Contractions and Their Dilations}\label{Sec 5}
In this section, we construct explicit $\Gamma_{E(3; 3; 1, 1, 1)} $-isometric and $\Gamma_{E(3; 2; 1; 2)} $-isometric dilations of $\Gamma_{E(3; 3; 1, 1, 1)}$-contraction and $\Gamma_{E(3; 2; 1; 2)}$-contraction, respectively. Let $\mathcal E$ be a Hilbert space, and $\ell_2(\mathcal E)$ denotes the Hilbert space of infinite direct sums $\mathcal E\oplus \mathcal {E}\cdots. $ Let $H^{2}_{\mathbb D}(\mathcal E)$ denote the Hardy space of $\mathcal E$-valued functions defined on $\mathbb D.$	
%\begin{thm}
		%Let $\textbf{V} = (V_1, V_2, V_3)$ be a commuting triple of bounded operators on a Hilbert space $\mathcal{H}$. Then the following are equivalent:
		%\begin{enumerate}
			%\item $\textbf{V}$ is a $\mathbb{E}$-isometry.
			
			%\item $\textbf{V}$ is a $\mathbb{E}$-contraction and $V_3$ is an isometry.
			
			%\item $V_1 = V^*_2V_3$, $V_2$ is a contraction and $V_3$ is an isometry.
			
			%\item $V_1 = V^*_2V_3$, the spectral radii $r(V_1), r(V_2)$ are not bigger than one, and $V_3$ is an isometry.
		%\end{enumerate}
	%\end{thm}
	
	%We are now ready to proceed for constructing example of $\Gamma_4$-contraction.
\begin{ex}\label{Exam 3}
		Let us consider the Hilbert space $\mathcal{H} = \underbrace{\ell_2(\mathbb{C}^2) \oplus \dots \oplus \ell_2(\mathbb{C}^2)}_{4 \,\, \text{times}}$. Let $A_{\alpha},B,P$ be the operators on $\mathcal H$ of the following form
		\begin{equation*}
			\begin{aligned}
				A_{\alpha} &=
				\begin{pmatrix}
					G & 0 & 0 & 0\\
					0 & 0 & 0 & 0\\
					0 & 0 & 0 & 0\\
					0 & 0 & 0 & 0
				\end{pmatrix},
				B =
				\begin{pmatrix}
					0 & 0 & 0 & 0\\
					0 & 0 & 0 & 0\\
					0 & 0 & 0 & 0\\
					0 & 0 & 0 & 0
				\end{pmatrix} \,\, \text{and} \,\,
				P =
				\begin{pmatrix}
					0 & 0 & 0 & 0\\
					0 & 0 & M_z & 0\\
					0 & - M_z & 0 & 0\\
					0 & 0 & 0 & 0
				\end{pmatrix},
			\end{aligned}
		\end{equation*}
		where $M_z$ denotes  the unilateral shift of multiplicity equal to the dimension of $\mathcal E$ and $G$ on $\ell_2(\mathbb C^2)$ is defined by
$$G(c_0,c_1,\ldots):=(G_1c_0,0,\cdots)~{\rm{for}}~(c_0,c_1,\ldots)\in \ell_2(\mathbb C^2)$$ and $G_1$ is of the form $G_1=\begin{pmatrix} 0 & \alpha\\ 0 & 0\end{pmatrix}$ for all $\alpha \in \mathbb {\bar{D}}.$ Let us set  $$\textbf{T} =(T_1,T_2,\dots,T_7)= (A_{\alpha}, A_{\alpha}, B,A_{\alpha},  B, B, P).$$  It is noted that
  \begin{equation*}
   \begin{aligned}
    D^2_P &=
    \begin{pmatrix}
     I_{\ell_2(\mathbb{C}^2)} & 0 & 0 & 0\\
     0 & 0 & 0 & 0\\
     0 & 0 & 0 & 0\\
     0 & 0 & 0 & I_{\ell_2(\mathbb{C}^2)}
    \end{pmatrix} = D_P.
   \end{aligned}
  \end{equation*}
The defect space of $P$ is given by $\mathcal{D}_P = \ell_2(\mathbb{C}^2) \oplus \{0\} \oplus \{0\} \oplus \ell_2(\mathbb{C}^2)$. To proceed, define
  \begin{equation}\label{f11}
   \begin{aligned}
(F_1,F_2,F_3,F_4,F_5,F_6)=
    \left(\left(\begin{smallmatrix}
     G & 0 & 0 & 0\\
     0 & 0 & 0 & 0\\
     0 & 0 & 0 & 0\\
     0 & 0 & 0 & 0
    \end{smallmatrix}\right),\left(\begin{smallmatrix}
     G & 0 & 0 & 0\\
     0 & 0 & 0 & 0\\
     0 & 0 & 0 & 0\\
     0 & 0 & 0 & 0
    \end{smallmatrix}\right),\left(\begin{smallmatrix}
     0 & 0 & 0 & 0\\
     0 & 0 & 0 & 0\\
     0 & 0 & 0 & 0\\
     0 & 0 & 0 & 0
    \end{smallmatrix} \right),\left(\begin{smallmatrix}
     G & 0 & 0 & 0\\
     0 & 0 & 0 & 0\\
     0 & 0 & 0 & 0\\
     0 & 0 & 0 & 0
    \end{smallmatrix}\right),\left(\begin{smallmatrix}
     0 & 0 & 0 & 0\\
     0 & 0 & 0 & 0\\
     0 & 0 & 0 & 0\\
     0 & 0 & 0 & 0
    \end{smallmatrix}\right),\left(\begin{smallmatrix}
     0 & 0 & 0 & 0\\
     0 & 0 & 0 & 0\\
     0 & 0 & 0 & 0\\
     0 & 0 & 0 & 0
    \end{smallmatrix}\right)
\right)\end{aligned}
  \end{equation}
on $\mathcal{D}_P$. With these definitions in place, we observe that
  \begin{equation}\label{Fundamental (A, B, P)}
   \begin{aligned}
    A_{\alpha} - B^*P &= D_P\left(\begin{smallmatrix}
     G & 0 & 0 & 0\\
     0 & 0 & 0 & 0\\
     0 & 0 & 0 & 0\\
     0 & 0 & 0 & 0
    \end{smallmatrix}\right)D_P, \,\, ~~~B - A^*_{\alpha}P = 0.\end{aligned}
  \end{equation}

From \eqref{f11} and \eqref{Fundamental (A, B, P)}, it then follows that
\begin{equation}
\begin{aligned}
T_i-T_{7-i}^*T_7=D_P\left(\begin{smallmatrix}
     G & 0 & 0 & 0\\
     0 & 0 & 0 & 0\\
     0 & 0 & 0 & 0\\
     0 & 0 & 0 & 0
    \end{smallmatrix}\right)D_P, &1\leq i \leq 2,~T_3-T_4^*T_7=0, T_4-T_3^*T_7=D_P\left(\begin{smallmatrix}
     G & 0 & 0 & 0\\
     0 & 0 & 0 & 0\\
     0 & 0 & 0 & 0\\
     0 & 0 & 0 & 0
    \end{smallmatrix}\right)D_P
\end{aligned}
\end{equation}
$${\rm{and}}$$
\begin{equation}
T_i-T_{7-i}^*T_7=0, 5 \leq i \leq 6.
\end{equation}
Because $F_j = 0$ for $j=3,5,6$, it follows that $[F_j, F^*_j] = 0$. On the other hand, since $G$ is not normal, a straightforward computation shows that
  \begin{equation*}
   \begin{aligned}
    [F_i, F^*_i] &=
    \begin{pmatrix}
     GG^* - G^*G & 0 & 0 & 0\\
     0 & 0 & 0 & 0\\
     0 & 0 & 0 & 0\\
     0 & 0 & 0 & 0
    \end{pmatrix} \ne 0
   \end{aligned}
  \end{equation*}
for $i=1,2,4.$ Thus, we conclude that $[F_i, F^*_i] \ne [F_{7-i}, F^*_{7-i}]$ for $1 \leqslant i \leqslant 6$. Furthermore, as $G^2_1 = 0$, it implies that $G^2 = 0$. As a result, we have $F_i^2 = 0$ for $i=1,2,4.$  
In this context, we do not provide a direct proof that $\textbf{T}$ is a $\Gamma_{E(3; 3; 1, 1, 1)}$-contraction. Instead, we will consider the dilation of a $\Gamma_{E(3; 3; 1, 1, 1)}$-contraction $\textbf{T}$ on a larger Hilbert space, which, by definition, indicates that it is a $\Gamma_{E(3; 3; 1, 1, 1)}$-contraction. 
		
\noindent{\textbf{Construction of $\Gamma_{E(3; 3; 1, 1, 1)}$-Isometric Dilation:}} Let $F_i=F$ for $i=1,2,4$ and
		\[\mathcal{K} = \mathcal{H} \oplus \mathcal{D}_P \oplus \mathcal{D}_P \oplus \cdots.\]
We define the following operators  on $\mathcal{K}$ by
		\begin{equation}\label{V_i, V_j}
			\begin{aligned}
				W_1&=V_i &=
				\begin{pmatrix}
					A_{\alpha} & 0 & 0 & 0 & \dots\\
					F^*D_P & F & 0 & 0 &  \dots\\
					0 & F^* & F & 0 & \dots\\
					0 & 0 & F^* & F &  \dots\\
					0 & 0 & 0 & F^* & \dots\\
					\vdots & \vdots & \vdots & \vdots & \ddots 
				\end{pmatrix}\,\,~{\rm{for}}~i=1,2,4,
				W_2&=V_j =
				\begin{pmatrix}
					B & 0 & 0 & 0 & \dots\\
					FD_P & 0 & 0 & 0 &  \dots\\
					F^*D_P & F & 0 & 0 & \dots\\
					0 & F^* & F & 0 & \dots\\
					0 & 0 & F^* & F & \dots\\
					\vdots & \vdots & \vdots & \vdots & \ddots 
				\end{pmatrix}~{\rm{for}}~j=3,5,6
			\end{aligned}
		\end{equation}
		 $${\rm{and}}$$
		\begin{equation}\label{V_7}
			\begin{aligned}
				V_7 =
				\begin{pmatrix}
					P & 0 & 0 & 0 & \dots\\
					0 & 0 & 0 & 0 &  \dots\\
					D_P & 0 & 0 & 0 & \dots\\
					0 & I & 0 & 0 & \dots\\
					0 & 0 & I & 0 & \dots\\
					\vdots & \vdots & \vdots & \vdots & \ddots 
				\end{pmatrix}.
			\end{aligned}
		\end{equation}
		We prove that $\textbf{V} = (V_1, \dots, V_7)$ is a $\Gamma_{E(3; 3; 1, 1, 1)}$-isometry. According to [Theorem $4.4$, \cite{apal2}] , we need to verify the following:
		\begin{enumerate}
			\item $V_1, \dots, V_7$ commute with each other,
			
			\item $V_i = V^*_{7-i}V_7, r(V_i) \leqslant 1$ for $1 \leqslant i \leqslant 6$,
			\item $V_7$ isometry.
		\end{enumerate}
Clearly,  $V_7$ is an isometry. 
		
\noindent{\textbf{Step 1:}} First we show that $V_iV_j=V_jV_i$ for $1\leq i,j \leq 7.$  If we can show that $W_1W_2=W_2W_1$ and $ W_iV_7=V_7W_i,1\leq i \leq 2,$ then we are done. Observe that 
		\begin{equation}\label{V_iV_j}
			\begin{aligned}
				W_1W_2&=
				\begin{pmatrix}
					A_{\alpha}B & 0 & 0 & 0 & \dots\\
					F^*D_PB + F^2D_P & 0 & 0 & 0 &  \dots\\
					F^*FD_P + FF^*D_P & F^2 & 0 & 0 & \dots\\
					F^*D_P & F^*F + FF^* & F^2 & 0 & \dots\\
					0 & F^{*2} & F^*F + FF^* & F^2 & \dots\\
					\vdots & \vdots & \vdots & \vdots & \ddots 
				\end{pmatrix}
			\end{aligned}
		\end{equation}
		and
		\begin{equation}\label{V_jV_i}
			\begin{aligned}
				W_2W_1&=
				\begin{pmatrix}
					BA_{\alpha} & 0 & 0 & 0 & \dots\\
					FD_PA_{\alpha} & 0 & 0 & 0 &  \dots\\
					F^*D_PA_{\alpha} + FF^*D_P & F^2 & 0 & 0 & \dots\\
					F^*D_P & F^*F + FF^* & F^2 & 0 &  \dots\\
					0 & F^{*2} & F^*F + FF^* & F^2 &\dots\\
					\vdots & \vdots & \vdots & \vdots & \ddots 
				\end{pmatrix}.
			\end{aligned}
		\end{equation}
We first show that $(2, 1)$ entries of 	$W_1W_2$ and $W_2W_1$  are same. To show this, we need to prove $F^*D_PB + F^2D_P=FD_PA_{\alpha}.$ As $F^2=B=0$, one can easily show that $F^*D_PB + F^2D_P=FD_PA_{\alpha}.$ Note that
\begin{equation}\label{V_i, V_j commute}
			\begin{aligned}
				F^*FD_P + FF^*D_P
				&=
				\begin{pmatrix}
					G^*G + GG^* & 0 & 0 & 0\\
					0 & 0 & 0 & 0\\
					0 & 0 & 0 & 0\\
					0 & 0 & 0 & 0
				\end{pmatrix} =
				F^*D_PA_{\alpha} + FF^*D_P.
			\end{aligned}
		\end{equation}
This implies that $(3, 1)$ entries of $W_1W_2$ and $W_2W_1$  are identical.  This shows that $W_1W_2=W_2W_1.$ We now show that $ W_iV_7=V_7W_i,1\leq i \leq 2.$ Notice that \begin{equation}\label{V_iV_7, V_7V_i}
			\begin{aligned}
				W_1V_7 &=
				\begin{pmatrix}
					A_{\alpha}P & 0 & 0 & 0 & \dots\\
					F^*D_PP & 0 & 0 & 0 & \dots\\
					FD_P & 0 & 0 & 0 & \dots\\
					F^*D_P & F & 0 & 0 & \dots\\
					0 & F^* & F & 0 & \dots\\
					0 & 0 & F^* & F & \dots\\
					\vdots & \vdots & \vdots & \vdots & \ddots 
				\end{pmatrix}
			\end{aligned}
			\text{and} \,\,\,\,
			V_7W_1 =
			\begin{pmatrix}
				PA_{\alpha} & 0 & 0 & 0 & \dots\\
				0 & 0 & 0 & 0 & \dots\\
				D_PA_{\alpha} & 0 & 0 & 0 & \dots\\
				F^*D_P & F & 0 & 0 & \dots\\
				0 & F^* & F & 0 & \dots\\
				0 & 0 & F^* & F & \dots\\
				\vdots & \vdots & \vdots & \vdots & \ddots 
			\end{pmatrix}.
		\end{equation}
		%Note that $F^*D_PP = 0$
		%\begin{equation*}
			%\begin{aligned}
				%F^*_1D_PP &=
				%\begin{pmatrix}
					%R^* & 0 & 0 & 0\\
					%0 & 0 & 0 & 0\\
					%0 & 0 & 0 & 0\\
					%0 & 0 & 0 & 0
				%\end{pmatrix}
				%\begin{pmatrix}
					%I_{l^2(\mathbb{C}^2)} & 0 & 0 & 0\\
					%0 & 0 & 0 & 0\\
					%0 & 0 & 0 & 0\\
					%0 & 0 & 0 & I_{l^2(\mathbb{C}^2)}
				%\end{pmatrix}
				%\begin{pmatrix}
					%0 & 0 & 0 & 0\\
					%0 & 0 & M_z & 0\\
					%0 & - M_z & 0 & 0\\
					%0 & 0 & 0 & 0
				%\end{pmatrix}
				%=
				%\begin{pmatrix}
					%0 & 0 & 0 & 0\\
					%0 & 0 & 0 & 0\\
					%0 & 0 & 0 & 0\\
					%0 & 0 & 0 & 0
				%\end{pmatrix}
			%\end{aligned}
		%\end{equation*}
It yields from \eqref{V_iV_7, V_7V_i} that each entry of the operator matrix $W_1V_7$ is identical to the corresponding entry of the the operator matrix $V_7W_1$. This demonstrates that $W_1V_7=V_7W_1.$ Similarly, we also observe that \begin{equation}\label{V_jV_7, V_7V_j}
			\begin{aligned}
				W_2V_7 &=
				\begin{pmatrix}
					BP & 0 & 0 & \dots\\
					FD_PP & 0 & 0 & \dots\\
					F^*D_PP & 0 & 0 & \dots\\
					FD_P & 0 & 0 & \dots\\
					F^*D_P & F & 0 & \dots\\
					0 & F^* & F & \dots\\
					\vdots & \vdots & \vdots & \ddots 
				\end{pmatrix}
			\end{aligned}
			\text{and} \,\,\,\,
			V_7W_2=
			\begin{pmatrix}
				PB & 0 & 0 & \dots\\
				0 & 0 & 0 & \dots\\
				D_PB & 0 & 0 & \dots\\
				FD_P & 0 & 0 & \dots\\
				F^*D_P & F & 0 & \dots\\
				0 & F^* & F & \dots\\
				\vdots & \vdots & \vdots & \ddots 
			\end{pmatrix}.
		\end{equation}
From \eqref{V_iV_7, V_7V_i}, it follows that each entry of the operator matrix $W_2V_7$ is identical to the corresponding entry of the the operator matrix $V_7W_2$. This implies that $W_2V_7=V_7W_2.$		
		
\noindent{\textbf{Step 2:}} In order to demonstrate that  $V_i = V^*_{7-i}V_7$ for $1 \leqslant i \leqslant 6,$ we observe that
		\begin{equation}\label{V_i = V^*_{7-i}V_7}
			\begin{aligned}
				V^*_{7-i}V_7 &=
				\begin{pmatrix}
					B^* & D_PF^* & D_PF & 0 & 0 & \dots\\
					0 & 0 & F^* & F & 0 &  \dots\\
					0 & 0 & 0 & F^* & F & \dots\\
					0 & 0 & 0 & 0 & F^* & \dots\\
					0 & 0 & 0 & 0 & 0 & \dots\\
					\vdots & \vdots & \vdots & \vdots & \vdots & \ddots 
				\end{pmatrix}
				\begin{pmatrix}
					P & 0 & 0 & 0 & \dots\\
					0 & 0 & 0 & 0 &  \dots\\
					D_P & 0 & 0 & 0 & \dots\\
					0 & I & 0 & 0 & \dots\\
					0 & 0 & I & 0 & \dots\\
					\vdots & \vdots & \vdots & \vdots & \ddots 
				\end{pmatrix}\\
				&=
				\begin{pmatrix}
					B^*P + D_PFD_P& 0 & 0 & 0 & 0 &\dots\\
					F^*D_P & F & 0 & 0 & 0 & \dots\\
					0 & F^* & F & 0 & 0 & \dots\\
					0 & 0 & F^* & F & 0 & \dots\\
					0 & 0 & 0 & F^* & F & \dots\\
					\vdots & \vdots & \vdots & \vdots & \vdots & \ddots 
				\end{pmatrix}.
			\end{aligned}
		\end{equation}
As $B^*P + D_PFD_P=A_{\alpha}$, 	we can derive from \eqref{V_i = V^*_{7-i}V_7} that
		\begin{equation*}
			\begin{aligned}
				V^*_{7-i}V_7 &=
				\begin{pmatrix}
					A_{\alpha} & 0 & 0 & 0 & \dots\\
					F^*D_P & F & 0 & 0 &  \dots\\
					0 & F^* & F & 0 & \dots\\
					0 & 0 & F^* & F &  \dots\\
					0 & 0 & 0 & F^* & \dots\\
					\vdots & \vdots & \vdots & \vdots & \ddots 
				\end{pmatrix}\\& = V_i.
			\end{aligned}
		\end{equation*}
				
\noindent{\textbf{Step 3:}} We now calculate norm $\|W_i\|$ for $1\leq i \leq 2.$  Note that
		\begin{equation}\label{V^*_iV_i}
			\begin{aligned}
				W^*_1W_1&=
				\begin{pmatrix}
					A^*_{\alpha}A_{\alpha} + D_PFF^*D_P & 0 & 0 & 0 & \dots\\
					0 & F^*F + FF^* & 0 & 0 &  \dots\\
					0 & 0 & F^*F + FF^* & 0 & \dots\\
					0 & 0 & 0 & F^*F + FF^* &  \dots\\
					\vdots & \vdots & \vdots & \vdots & \ddots 
				\end{pmatrix}.
			\end{aligned}
		\end{equation}
Since $F = A_{\alpha}$ and $D_PFF^*D_P = FF^*,$ it implies from \eqref{V^*_iV_i} that
		\begin{equation}\label{w11}
			\begin{aligned}
				||W_1|^2
				&= ||W^*_1W_1||\\
				&= ||F^*F + FF^*||\\
				&= ||G^*G + GG^*||\\
				&= 
				\left|\left|
				\begin{pmatrix}
					|\alpha|^2 & 0\\
					0 & |\alpha|^2
				\end{pmatrix}\right|\right|\\
				&= |\alpha|^2\\
				&\leqslant 1.
			\end{aligned}
		\end{equation}
Because $V_7$ is isometry and $W_2=W_1^*V_7$, we deduce from \eqref{w11} that $\|W_2\|\leq 1.$ Therefore, by [Theorem $4.4$, \cite{apal2}], we conclde that $\textbf{V} = (V_1, \dots, V_7)$ is a $\Gamma_{E(3; 3; 1, 1, 1)}$-isometry. 
	\end{ex}
\begin{rem}
In Example \ref{Exam 3}, we have seen that $\textbf{T} = (A_{\alpha}, A_{\alpha}, B,A_{\alpha},  B, B, P)$ has an $\Gamma_{E(3; 3; 1, 1, 1)} $-isometric dilation $\textbf{V} = (V_1, \dots, V_7)$. Since $\textbf{V} = (V_1, \dots, V_7)$ is a $\Gamma_{E(3; 3; 1, 1, 1)}$-isometry, it follows from [Theorem $4.8$, \cite{apal2}] that $\textbf{W} = (V_1, V_3 + V_5, V_7, V_2 + V_4, V_6)$ is a $\Gamma_{E(3; 2; 1, 2)}$-isometry. It implies from [Theorem $4.5$, \cite{apal2}] that $\textbf{W}$ is a $\Gamma_{E(3; 2; 1, 2)} $-contraction and so being the restriction of the invariant subspace $\mathcal H$, $(A_{\alpha}, A_{\alpha} + B, P, A_{\alpha} + B, B) $ is a  $\Gamma_{E(3; 2; 1, 2)}$-contraction for all $\alpha \in \overline{\mathbb D}. $ Still now we have not identified an example of $\Gamma_{E(3; 3; 1, 1, 1)}$-contraction (respectively, $\Gamma_{E(3; 2; 1, 2)}$-contraction), which fails to satisfy one of the necessary conditions outlined in Theorem \ref{Necessary Conditions 1} (respectively, Theorem \ref{Necessary Conditions 2}). Thus, the existence of $\Gamma_{E(3; 3; 1, 1, 1)}$-isometric dilation (respectively, $\Gamma_{E(3; 2; 1, 2)}$-isometric dilation) is still open.\end{rem}	
	
\section{A Family of $\mathcal{\bar{P}}$-Contraction and Their Isometric Dilation}\label{Sec 6}
The existence of $\mathcal{\bar{P}}$-isometric dilation for a $\mathcal{\bar{P}}$-contraction is still unknown. However, we construct a family of $\mathcal{\bar{P}}$-contractions that have $\mathcal{\bar{P}}$-isometric dilation in this section.
\begin{lem}\label{g1112}
Let $G$ be defined as in Example \ref{Exam 3}. Then the following statements hold:
\begin{enumerate}
\item $G^*(I_{l^2(\mathbb{C}^2)} - \frac{1}{4}(G^*G + GG^*)) =(I_{l^2(\mathbb{C}^2)} - \frac{1}{4}(G^*G + GG^*)) G^*.$
\item $G(I_{l^2(\mathbb{C}^2)} - \frac{1}{4}(G^*G + GG^*)) =(I_{l^2(\mathbb{C}^2)} - \frac{1}{4}(G^*G + GG^*)) G.$
\end{enumerate}
\end{lem}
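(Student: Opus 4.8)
The plan is to reduce both identities to the single observation that $G^*G + GG^*$ is, apart from its zero summand, a scalar operator, so that the stated commutations hold for trivial reasons. Since the identity operator commutes with every bounded operator, writing $M = G^*G + GG^*$, statement $(1)$ is equivalent to $G^*M = MG^*$ and statement $(2)$ is equivalent to $GM = MG$. Thus it suffices to prove that both $G$ and $G^*$ commute with $M$.

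First I would record the block structure of $G$. Decomposing $\ell_2(\mathbb{C}^2) = \mathbb{C}^2 \oplus \mathcal{R}$, where the first summand is the leading coordinate and $\mathcal{R}$ is the closed span of the remaining coordinates, the defining formula $G(c_0, c_1, \ldots) = (G_1 c_0, 0, \ldots)$ shows that, with respect to this decomposition, $G = \begin{pmatrix} G_1 & 0 \\ 0 & 0 \end{pmatrix}$ and hence $G^* = \begin{pmatrix} G_1^* & 0 \\ 0 & 0 \end{pmatrix}$.

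Next I would compute $M$ at the level of the single block $G_1 = \begin{pmatrix} 0 & \alpha \\ 0 & 0 \end{pmatrix}$. A direct calculation gives $G_1^* G_1 = \begin{pmatrix} 0 & 0 \\ 0 & |\alpha|^2 \end{pmatrix}$ and $G_1 G_1^* = \begin{pmatrix} |\alpha|^2 & 0 \\ 0 & 0 \end{pmatrix}$, so that $G_1^* G_1 + G_1 G_1^* = |\alpha|^2 I_{\mathbb{C}^2}$. Lifting back, $M = \begin{pmatrix} |\alpha|^2 I_{\mathbb{C}^2} & 0 \\ 0 & 0 \end{pmatrix}$, and therefore $I - \frac14 M = \begin{pmatrix} (1 - \frac14|\alpha|^2) I_{\mathbb{C}^2} & 0 \\ 0 & I_{\mathcal{R}} \end{pmatrix}$ is block-diagonal and scalar on each block.

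Finally I would conclude by a one-line matrix multiplication: since $I - \frac14 M$ is block-scalar and $G$ (respectively $G^*$) is supported on the top-left corner, both $G(I - \frac14 M)$ and $(I - \frac14 M)G$ equal $\begin{pmatrix} (1 - \frac14|\alpha|^2) G_1 & 0 \\ 0 & 0 \end{pmatrix}$, and likewise for $G^*$ with $G_1^*$ in place of $G_1$. The only genuine content of the argument, and really its whole point, is the algebraic coincidence $G_1^* G_1 + G_1 G_1^* = |\alpha|^2 I_{\mathbb{C}^2}$; once this is in hand there is no obstacle, since commuting with a block-scalar operator is automatic.
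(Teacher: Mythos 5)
Your proof is correct, but it takes a genuinely different route from the paper's. The paper never diagonalizes $G^*G + GG^*$: it expands both products and uses only the nilpotency $G^2 = 0$ (inherited from $G_1^2 = 0$), so that
\begin{equation*}
G\bigl(G^*G + GG^*\bigr) = GG^*G + G^2G^* = GG^*G = G^*G^2 + GG^*G = \bigl(G^*G + GG^*\bigr)G,
\end{equation*}
and similarly for $G^*$ using $(G^*)^2 = 0$; that argument is purely algebraic and works verbatim for every bounded square-zero operator. You instead decompose $\ell_2(\mathbb{C}^2) = \mathbb{C}^2 \oplus \mathcal{R}$ and exploit the identity $G_1^*G_1 + G_1G_1^* = |\alpha|^2 I_{\mathbb{C}^2}$, which makes $I_{\ell_2(\mathbb{C}^2)} - \frac{1}{4}(G^*G + GG^*)$ block-diagonal with a scalar on each block; since $G$ and $G^*$ preserve this decomposition, the commutation is a one-line multiplication. (Only that last point needs care: commuting with a block-scalar operator is automatic only for operators that respect the blocks, and your explicit matrices do confirm this for $G$ and $G^*$.) Each approach buys something. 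Yours yields, at no extra cost, the commutation of $G$ and $G^*$ with the square root $(I - \frac{1}{4}(G^*G + GG^*))^{1/2}$ as well, because the positive square root of a block-scalar positive operator is again block-scalar; these square-root identities are what the paper actually needs afterwards (in Example \ref{Exam 5}, to verify $A_\alpha S_\alpha = S_\alpha A_\alpha$), and the paper obtains them by additionally invoking the general fact that an operator commuting with a positive operator commutes with its positive square root [Page 153, \cite{nyoung}]. The paper's route, in turn, is shorter, requires no spatial decomposition, and applies to any $G$ with $G^2 = 0$ rather than to this particular $G$ alone.
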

\begin{proof}	
Note that
\begin{equation}\label{G111}
\begin{aligned}
\|G^*G + GG^*\|&=\|G^*_1G_1 + G_1G^*_1\|\\& =
	\left|\left|
	\begin{pmatrix}
		|\alpha|^2 & 0\\
		0 & |\alpha|^2
	\end{pmatrix}\right|\right|\\&\leq 1.
	\end{aligned}
	\end{equation}
It follows from \eqref{G111} that $\frac{\|G^*G + GG^*\|}{4}\leq 1,$ which is equivalent to the condition $$I_{l^2(\mathbb{C}^2)} - \frac{1}{4}(G^*G + GG^*)>0.$$  Observe that
\begin{equation}\label{F 2}
\begin{aligned}
G(I_{l^2(\mathbb{C}^2)} - \frac{1}{4}(G^*G + GG^*))
&= G - \frac{1}{4} (GG^{*}G + G^2G^*)\\
&= G - \frac{1}{4} GG^*G\\
&= G - \frac{1}{4} (GG^*G + G^*G^{2})\\ 
&= (I_{l^2(\mathbb{C}^2)} - \frac{1}{4}(G^*G + GG^*)) G.
\end{aligned}
\end{equation}
It implies from [Page $153$, \cite{nyoung}] that
\begin{equation}\label{F 3}
\begin{aligned}
G(I_{l^2(\mathbb{C}^2)} - \frac{1}{4}(G^*G + GG^*))^{1/2} &= (I_{l^2(\mathbb{C}^2)} - \frac{1}{4}(G^*G + GG^*))^{1/2}G.
\end{aligned}
\end{equation}
Using a similar argument, we can also demonstrate that
	\begin{equation}\label{F 4}
		\begin{aligned}
			G^*(I_{l^2(\mathbb{C}^2)} - \frac{1}{4}(G^*G + GG^*))^{1/2} &= (I_{l^2(\mathbb{C}^2)} - \frac{1}{4}(G^*G + GG^*))^{1/2}G^*.
		\end{aligned}
	\end{equation}
This completes the proof.
\end{proof}	
	\begin{ex}\label{Exam 5}
		Let $\mathcal{H} = \underbrace{\ell_2(\mathbb{C}^2) \oplus \dots \oplus \ell_2(\mathbb{C}^2)}_{4 \,\, \text{times}}.$  Let  $A_{\alpha}, S_{\alpha}, P$ be the operators on $\mathcal H$ of the following form:
		\begin{equation*}
			\begin{aligned}
				A_{\alpha} &=
				\begin{pmatrix}
					(I_{\ell_2(\mathbb{C}^2)} - \frac{1}{4}(G^*G + GG^*))^{1/2} & 0 & 0 & 0\\
					0 & I_{\ell_2(\mathbb{C}^2)} & 0 & 0\\
					0 & 0 & I_{\ell_2(\mathbb{C}^2)} & 0\\
					0 & 0 & 0 & I_{\ell_2(\mathbb{C}^2)}
				\end{pmatrix}, \hspace{0.5cm}
				S_{\alpha} =
				\begin{pmatrix}
					G & 0 & 0 & 0\\
					0 & 0 & 0 & 0\\
					0 & 0 & 0 & 0\\
					0 & 0 & 0 & 0
				\end{pmatrix},
			\end{aligned}
		\end{equation*}
		$${\rm{and}}$$
		\begin{equation*}
			\begin{aligned}
				P &=
				\begin{pmatrix}
					0 & 0 & 0 & 0\\
					0 & M_z & 0 & 0\\
					0 & 0 & 0 & M_z\\
					0 & 0 & -M_z & 0
				\end{pmatrix},
			\end{aligned}
		\end{equation*}
		where $G$ is defined as in Example \ref{Exam 3}. Clearly, $S_{\alpha}P=PS_{\alpha}. $ By Lemma \ref{g1112}, we have $A_{\alpha}S_{\alpha}=S_{\alpha}A_{\alpha}. $ Furthermore, a simple calculation demonstrates that $A_{\alpha}P=PA_{\alpha}. $ Therefore, we conclude that $(A_{\alpha}, S_{\alpha}, P)$ is a commuting triple of bounded operators on $\mathcal H$. Notice that		\begin{equation}
			\begin{aligned}
				D^2_P = I - P^*P &=
				\begin{pmatrix}
					I_{\ell_2(\mathbb{C}^2)} & 0 & 0 & 0\\
					0 & I_{\ell_2(\mathbb{C}^2)} & 0 & 0\\
					0 & 0 & I_{\ell_2(\mathbb{C}^2)} & 0\\
					0 & 0 & 0 & I_{\ell_2(\mathbb{C}^2)}
				\end{pmatrix} -
				\begin{pmatrix}
					0 & 0 & 0 & 0\\
					0 & M_z^* & 0 & 0\\
					0 & 0 & 0 & -M_z^*\\
					0 & 0 & M_z^* & 0
				\end{pmatrix}
				\begin{pmatrix}
					0 & 0 & 0 & 0\\
					0 & M_z & 0 & 0\\
					0 & 0 & 0 & M_z\\
					0 & 0 & -M_z & 0
				\end{pmatrix}\\
				&=
				\begin{pmatrix}
					I_{\ell_2(\mathbb{C}^2)} & 0 & 0 & 0\\
					0 & 0 & 0 & 0\\
					0 & 0 & 0 & 0\\
					0 & 0 & 0 & 0
				\end{pmatrix}\\
				&= D_P.
			\end{aligned}
		\end{equation}
Thus, the defect space of $P$ is $\mathcal{D}_P = l^2(\mathbb{C}^2) \oplus \{0\} \oplus \{0\} \oplus \{0\}$. Let us set
		\begin{equation*}
			\begin{aligned}
				F &=
				\begin{pmatrix}
					G & 0 & 0 & 0\\
					0 & 0 & 0 & 0\\
					0 & 0 & 0 & 0\\
					0 & 0 & 0 & 0
				\end{pmatrix} = S_{\alpha}.
			\end{aligned}
		\end{equation*}
Observe that
		\begin{equation}\label{S alpha, P Fundamental}
			\begin{aligned}
				S_{\alpha} - S^*_{\alpha}P
				&= S_{\alpha} = F = D_PFD_P.
			\end{aligned}
		\end{equation}
		
		\noindent\textbf{Construction of $\mathcal{\bar{P}}$-isometric dilation:} Let $\mathcal{K} = \mathcal{H} \oplus \mathcal{D}_P \oplus \mathcal{D}_P \oplus \dots. $ We consider a triple of bounded operators $(R_1, R_2, R_3)$ of the following form:
		\begin{equation}\label{R_1, R_2}
			\begin{aligned}
				R_1 &=
				\begin{pmatrix}
					A_{\alpha} & 0 & 0 & 0 & \dots\\
					0 & L & 0 & 0 & \dots\\
					0 & 0 & L & 0 & \dots\\
					0 & 0 & 0 & L & \dots\\
					\vdots & \vdots & \vdots & \vdots & \ddots
				\end{pmatrix}, \,\,
				R_2 =
				\begin{pmatrix}
					S_{\alpha} & 0 & 0 & 0 & \dots\\
					F^*D_P & F & 0 & 0 &\dots\\
					0 & F^* & F & 0 & \dots\\
					0 & 0 & F^* & F & \dots\\
					\vdots & \vdots & \vdots & \vdots & \ddots
				\end{pmatrix}
			\end{aligned}
		\end{equation}
		$${\rm{and}}$$
		\begin{equation}\label{R_3}
			\begin{aligned}
				R_3 &=
				\begin{pmatrix}
					P & 0 & 0 & 0 & \dots\\
					D_P & 0 & 0 & 0 & \dots\\
					0 & I & 0 & 0 & \dots\\
					0 & 0 & I & 0 & \dots\\
					\vdots & \vdots & \vdots & \vdots & \ddots
				\end{pmatrix},
			\end{aligned}
		\end{equation}
		where $L = (I_{\mathcal{H}} - \frac{1}{4}(F^*F + FF^*))^{1/2}$.
		
In order to show that $(R_1, R_2, R_3)$ is a $\mathcal{\bar{P}}$-isometry,  we must verify the following properties as described in [Theorem $5.2$, \cite{Jindal}]:
\begin{enumerate}
\item $(R_1, R_2, R_3)$ is a commuting triple,
\item $(R_2, R_3)$ is a $\Gamma$-isometry,
\item $R^*_1R_1 = I - \frac{1}{4}R^*_2R_2$.
\end{enumerate}
		
\noindent{\textbf{Step 1:}} We now prove that $(R_1, R_2, R_3)$ is a commuting triple. Note that
		\begin{equation}\label{R_1R_2}
			\begin{aligned}
				R_1R_2 &=
				\begin{pmatrix}
					A_{\alpha}S_{\alpha} & 0 & 0 & 0 & \dots\\
					LF^*D_P & LF & 0 & 0 & \dots\\
					0 & LF^* & LF & 0 & \dots\\
					0 & 0 & LF^* & LF & \dots\\
					\vdots & \vdots & \vdots & \vdots & \ddots
				\end{pmatrix}
			\end{aligned}
		\end{equation}
		$${\rm{and}}$$
		\begin{equation}\label{R_2R_1}
			\begin{aligned}
				R_2R_1 =
				\begin{pmatrix}
					S_{\alpha}A_{\alpha} & 0 & 0 & 0 & \dots\\
					F^*D_PA_{\alpha} & FL & 0 & 0 & \dots\\
					0 & F^*L & FL & 0 & \dots\\
					0 & 0 & F^*L & FL & \dots\\
					\vdots & \vdots & \vdots & \vdots & \ddots
				\end{pmatrix}.
			\end{aligned}
		\end{equation}
Note that $LF^*D_P = LS^*_{\alpha}$ and $F^*D_PA_{\alpha} = S^*_{\alpha}A_{\alpha}$. It follows from  $(\ref{F 3})$  and Lemma \ref{g1112} that $LS^*_{\alpha} = S^*_{\alpha}A_{\alpha}$. Thus, we deduce that the $(2,1)$ entries of $R_1R_2$ and $R_2R_1$ are same. It yields from  Lemma \ref{g1112} that $LF = FL$ and $LF^* = F^*L$. Hence, we conclude that $R_1R_2 = R_2R_1$. To prove that $R_2R_3 = R_3R_2,$ we see that
		\begin{equation}\label{R_2R_3, R_3R_2}
			\begin{aligned}
				R_2R_3 &=
				\begin{pmatrix}
					S_{\alpha}P & 0 & 0 & 0 & \dots\\
					F^*D_PP + FD_P & 0 & 0 & 0 & \dots\\
					F^*D_P & F & 0 & 0 & \dots\\
					0 & F^* & F & 0 & \dots\\
					\vdots & \vdots & \vdots & \vdots & \ddots
				\end{pmatrix}
				\,\, \text{and} \hspace{0.3cm}
				R_3R_2 =
				\begin{pmatrix}
					PS_{\alpha} & 0 & 0 & 0 & \dots\\
					D_PS_{\alpha} & 0 & 0 & 0 & \dots\\
					F^*D_P & F & 0 & 0 & \dots\\
					0 & F^* & F & 0 & \dots\\
					\vdots & \vdots & \vdots & \vdots & \ddots
				\end{pmatrix}.
			\end{aligned}
		\end{equation}
In order to show $R_2R_3 = R_3R_2$, we nee to verify  $F^*D_PP + FD_P = D_PS_{\alpha}$. We observe that
		\begin{equation}\label{}
			\begin{aligned}
				F^*D_PP + FD_P &=  S_{\alpha}  = D_PS_{\alpha}.
			\end{aligned}
		\end{equation}
Thus, the $(2,1)$ entries of $R_2R_3$ and $R_3R_2$ are equal and hence $R_2R_3 = R_3R_2$. Note that 
		\begin{equation}\label{R_1R_3, R_3R_1}
			\begin{aligned}
				R_1R_3 &=
				\begin{pmatrix}
					A_{\alpha}P & 0 & 0 & 0 & \dots\\
					LD_P & 0 & 0 & 0 & \dots\\
					0 & L & 0 & 0 & \dots\\
					0 & 0 & L & 0 & \dots\\
					\vdots & \vdots & \vdots & \vdots & \ddots
				\end{pmatrix}
				\,\, \text{and} \hspace{0.3cm}
				R_3R_1 =
				\begin{pmatrix}
					PA_{\alpha} & 0 & 0 & 0 & \dots\\
					D_PA_{\alpha} & 0 & 0 & 0 & \dots\\
					0 & L & 0 & 0 & \dots\\
					0 & 0 & L & 0 & \dots\\
					\vdots & \vdots & \vdots & \vdots & \ddots
				\end{pmatrix}.
			\end{aligned}
		\end{equation}
It implies from Lemma \ref{g1112} that $LD_P = D_PA_{\alpha}$. This shows that $R_1R_3=R_3R_1$.
		
\noindent{\textbf{Step 2:}} In order to prove that $(R_2, R_3)$ is a $\Gamma$-isometry, we need to verify  $R_2 = R^*_2R_3,$ $R_3$ is an isometry and the spectral radius $r(R_2) \leqslant 2$.  We first show that $R_2 = R^*_2R_3$. Notice that
		\begin{equation}\label{R_2 star R_3}
			\begin{aligned}
				R^*_2R_3 &=
				\begin{pmatrix}
					S^*_{\alpha} & D_PF & 0 & 0 & \dots\\
					0 & F^* & F & 0 & \dots\\
					0 & 0 & F^* & F & \dots\\
					0 & 0 & 0 & F^* & \dots\\
					\vdots & \vdots & \vdots &\vdots & \ddots
				\end{pmatrix}
				\begin{pmatrix}
					P & 0 & 0 & 0 & \dots\\
					D_P & 0 & 0 & 0 & \dots\\
					0 & I & 0 & 0 & \dots\\
					0 & 0 & I & 0 & \dots\\
					\vdots & \vdots & \vdots & \vdots & \ddots
				\end{pmatrix}\\
				&=
				\begin{pmatrix}
					S^*_{\alpha}P + D_PFD_P & 0 & 0 & 0 & \dots\\
					F^*D_P & F & 0 & 0 & \dots\\
					0 & F^* & F & 0 & \dots\\
					0 & 0 & F^* & F & \dots\\
					\vdots & \vdots & \vdots &\vdots & \ddots
				\end{pmatrix}\\
				&=
				\begin{pmatrix}
					S_{\alpha} & 0 & 0 & 0 & \dots\\
					F^*D_P & F & 0 & 0 &\dots\\
					0 & F^* & F & 0 & \dots\\
					0 & 0 & F^* & F & \dots\\
					\vdots & \vdots & \vdots & \vdots & \ddots
				\end{pmatrix} \,\, (\text{by $(\ref{S alpha, P Fundamental})$})\\
				&= R_2.
			\end{aligned}
		\end{equation}
As $F^2=0$, we see that
		\begin{equation}\label{R_2 star R_2}
			\begin{aligned}
				R^*_2R_2 &=
				\begin{pmatrix}
					S^*_{\alpha} & D_PF & 0 & 0 & \dots\\
					0 & F^* & F & 0 & \dots\\
					0 & 0 & F^* & F & \dots\\
					0 & 0 & 0 & F^* & \dots\\
					\vdots & \vdots & \vdots &\vdots & \ddots
				\end{pmatrix}
				\begin{pmatrix}
					S_{\alpha} & 0 & 0 & 0 & \dots\\
					F^*D_P & F & 0 & 0 &\dots\\
					0 & F^* & F & 0 & \dots\\
					0 & 0 & F^* & F & \dots\\
					\vdots & \vdots & \vdots & \vdots & \ddots
				\end{pmatrix}\\
				&=
				\begin{pmatrix}
					S^*_{\alpha}S_{\alpha} + D_PFF^*D_P & D_PF^2 & 0 & 0 & \dots\\
					F^{*2}D_P & F^*F + FF^* & F^2 & 0 &\dots\\
					0 & F^{*2} & F^*F + FF^* & F^2 & \dots\\
					0 & 0 & F^{*2} & F^*F + FF^* & \dots\\
					\vdots & \vdots & \vdots & \vdots & \ddots
				\end{pmatrix}\\
				&=
				\begin{pmatrix}
					F^*F + FF^* & 0 & 0 & 0 & \dots\\
					0 & F^*F + FF^* & 0 & 0 &\dots\\
					0 & 0 & F^*F + FF^* & 0 & \dots\\
					0 & 0 & 0 & F^*F + FF^* & \dots\\
					\vdots & \vdots & \vdots & \vdots & \ddots
				\end{pmatrix}.
			\end{aligned}
		\end{equation}
We observe that
		\begin{equation}\label{r1112}
			\begin{aligned}
				||R_2||^2 &=
				||R^*_2R_2||\\
				&= ||F^*F + FF^*||\\
				&= ||G^*G + GG^*||\\
				&= \left|\left|
				\begin{pmatrix}
					|\alpha|^2 & 0\\
					0 & |\alpha|^2
				\end{pmatrix}\right|\right|\\
				&= |\alpha|^2\\
				&\leqslant 1.
			\end{aligned}
		\end{equation}
From \eqref{r1112}, we deduce that $||R_2|| \leqslant 2$. This shows that  $(R_2, R_3)$ is a $\Gamma$-isometry. 

\noindent{\textbf{Step 3:}}  We now  prove that $R^*_1R_1 = I - \frac{1}{4}R^*_2R_2$.  We note  that
		\begin{equation}\label{A alpha star A alpha}
			\begin{aligned}
				A^*_{\alpha}A_{\alpha} &=
				\begin{pmatrix}
					(I_{l^2(\mathbb{C}^2)} - \frac{1}{4}(G^*G + GG^*)) & 0 & 0 & 0\\
					0 & I_{l^2(\mathbb{C}^2)} & 0 & 0\\
					0 & 0 & I_{l^2(\mathbb{C}^2)} & 0\\
					0 & 0 & 0 & I_{l^2(\mathbb{C}^2)}
				\end{pmatrix}\\
				&=
				I_{\mathcal{H}} - \frac{1}{4}(S^*_{\alpha}S_{\alpha} + S_{\alpha}S^*_{\alpha})\\
				&=
				I_{\mathcal{H}} - \frac{1}{4}(F^*F + FF^*).
			\end{aligned}
		\end{equation}
In order to show $R^*_1R_1 = I - \frac{1}{4}R^*_2R_2$, it follows from \eqref{A alpha star A alpha} that
		\begin{equation}\label{R_1 star R_1}
			\begin{aligned}
				R^*_1R_1 &=
				\begin{pmatrix}
					A^*_{\alpha}A_{\alpha} & 0 & 0 & 0 & \dots\\
					0 & L^2 & 0 & 0 &\dots\\
					0 & 0 & L^2 & 0 & \dots\\
					0 & 0 & 0 & L^2 & \dots\\
					\vdots & \vdots & \vdots & \vdots & \ddots
				\end{pmatrix}\\
				&=
				\begin{pmatrix}
					I_{\mathcal{H}} - \frac{1}{4}(F^*F + FF^*) & 0 & 0 & 0 & \dots\\
					0 & L^2 & 0 & 0 & \dots\\
					0 & 0 & L^2 & 0 & \dots\\
					0 & 0 & 0 & L^2 & \dots\\
					\vdots & \vdots & \vdots & \vdots & \ddots
				\end{pmatrix}\\
				&= I - \frac{1}{4}R^*_2R_2.
			\end{aligned}
		\end{equation}
This shows that  $(R_1, R_2, R_3)$ is a $\mathcal{\bar{P}}$-isometry. Because $(A_{\alpha}, S_{\alpha}, P)=(R_1, R_2, R_3)_{|_{\mathcal{H}}},$ we conclude that $(A_{\alpha}, S_{\alpha}, P)$ is a of $\mathcal{\bar{P}}$-contraction for all $\alpha \in \overline{\mathbb D}.$
	\end{ex}
	
\textsl{Acknowledgements:}
The first-named author is supported by the research project of SERB with ANRF File Number: CRG/2022/003058, by the Science and Engineering Research Board (SERB), Department of Science and Technology (DST), Government of India. 
\vskip-1cm

	%\vspace{.5cm}
	%\noindent (Keshari) \sc{An OCC of Homi Bhabha National Institute, School of Mathematical Sciences, National Institute
		%	of Science Education and Research, Bhubaneswar, Post-Jatni, Khurda, 752050, India}\\
	%{E-mail address:} {\bf dinesh@niser.ac.in}

\begin{thebibliography}{BNS}
\bibitem{A}   W. Arveson,
              \textit{Subalgebras of $C^*$-algebras, }
               Acta Math., {\bf {123}} (1969), 141 -224.



\bibitem{AW}   W. Arveson,
              \textit{Subalgebras of $C^*$-algebras II,}
               Acta Math., {\bf {128}} (1972), 271 -308.


\bibitem{agler} J. Agler,
               \textit{Rational dilation on an annulus,}
               Ann. of Math., {\bf {121}} (1985), 537 - 563.
		
		
		
		\bibitem{Abouhajar}
		A. A. Abouhajar, M. C. White, N. J. Young, 
		\textit{A Schwarz Lemma for a Domain Related to $\mu$-Synthesis}, 
		The Journal of Geometric Analysis, \textbf{17}, (2007).
		
    \bibitem{JAgler} J. Agler and N.J. Young,
                      \textit{The hyperbolic geometry of the symmetrized bidisc,}
                      J. Geom. Anal. {\bf {14}} (2004) 375--403.

\bibitem{young} J. Agler,  N. J. Young,
               \textit{Operators having the symmetrized bidisc as a spectral set,}
               Proc. Edinburgh Math. Soc., {\bf {43}} (2000), 195-210.
               
               
                 \bibitem{ay}
	{{ J. Agler and N. J. Young}}, {\it {A commutant lifting theorem for a domain in $\mathbb{C}^2$ and spectral interpolation}}, {{J. Funct. Anal. \textbf{161},{{(1999)}},452-477.}}
	
	\bibitem{ay1}
	{{ J. Agler and N. J. Young}}, {\it {A model theory for $\Gamma$-contractions}}, {{J. Oper. Theory \textbf{49},(1),(45-60)}}, {{2003}}.
               
               
                \bibitem{JAY}  J. Agler and N. J. Young,
                \textit{The two-point  spectral Nevanlinna Pick problem,}
                  Integral Equations Operator Theory {\bf {37}} (2000) 375--385.

                 
		
	  \bibitem{JANY} J. Agler and N.J. Young,
                \textit{A Schwarz lemma for symmetrized bidisc,}
                 Bull. Lond. Math. Soc. {\bf {33 }}(2001) 175--186.

                 		
                      
                    		
		\bibitem{Young}
		{J. Agler, Zinaida A. Lykova, N. Young}, 
		\textit{The complex geometry of a domain related to $\mu$-synthesis}, 
		Journal of Mathematical Analysis and Application, \textbf{422} (2015) 508-543.
		
		\bibitem{McCarthy}
		{J. Agler, J. E. McCarthy, N. J. Young},
		\textit{Operator Analysis: Hilbert Space Methods in Complex Analysis}, Second Edition, Cambridge Tracts in Mathematics, Cambridge University Press, (2020).
		
		\bibitem{Ando}
		{T. Ando},
		\textit{On a Pair of Commutative Contractions}, 
		Acta Sci. Math. \textbf{ 24} (1963) 88-90.
		
		\bibitem{Ball}
		{J. A. Ball, H. Sau},
		\textit{Rational Dilation of Tetrablock Contraction Revisited}, 
		Journal of Functional Analysis \textbf{278} (2020) 108275.
		
		\bibitem{Bharali}
		G. Bharali, 
		\textit{A Family of Domains Associated with $\mu$-Synthesis,} 
		Integr. Equ. Oper. Theory \textbf{82} (2015), 267-285.		
		
		\bibitem{Bhatia}
		{Rajendra Bhatia},
		\textit{Positive Definite Matrices},  Princeton University Press, Princeton, NJ (2007).
		
		\bibitem{SS} S. Biswas, S. Shyam Roy,
             \textit{Functional models for $\Gamma_n$-contractions and characterization of $\Gamma_n$-isometries,}
             J. Func. Anal., {\bf {266}} (2014), 6224 --6255.

		\bibitem{Browder} A. Browder, 
		\textit{Introduction to Function Algebras,}
		W.A. Benjamin Inc., New York, (1969).
		
		\bibitem{Bhattacharyya}
		T. Bhattacharyya, 
		\textit{The Tetrablock as a Spectral Set,}  Indiana University
		Mathematics Journal, \textbf{ 63 } (2014), 1601--1629.
		
		\bibitem{Roy}
		{T. Bhattacharyya, S. Pal, S. S. Roy},
		\textit{Dilations of $\Gamma$-Contractions by Solving Operator Equations}, 
		Advances in Mathematics \textbf{230} (2012) 577-606.
		
		\bibitem{costara}C. Costara,
                 \textit{On the spectral Nevanlinna-Pick problem,}
                 Studia Math., {\bf  {170}} (2005), 23--55.
                 
                 \bibitem{Ccostara} C. Costara,
                 \textit{The symmetrized bidisc and Lempert's theorem,}
                  Bull. London Math. Soc. {\bf 36} (2004) 656--662.
		 
		 \bibitem{aj}
{{J. C. Doyle, A. Packard}}, {\it {The Complex Structured Singular Value}}, {{Automatica \textbf{29}(1)}},{{(1993)}}, 71-109.

		\bibitem{michel} M. A. Dritschel,   S. McCullough
		\textit{The failure of rational dilation on a triply connected domain,}
		J. Amer. Math. Soc. \textbf{18} (2005), no. 4, 873--918.
	         
		\bibitem {Dou}
		{R. G. Douglas},
         \textit{On majorization, factorization, and range inclusion of operators on Hilbert space},
Proc. Amer. Math. Soc. \textbf{17} (1966), 413-415.
		
		\bibitem{Du} H. K. Du and P. Jin,
		\textit{Perturbation of spectrums of $2 \times 2$ operator matrices,}
		Proc. Amer. Math. Soc.  \textbf{121} (1994), no. 3, 761-766.

		
%		\bibitem{Ae1} A. Edigarian,
%                  \textit{A note on Costara paper,}
%                  Ann. Polon. Math. {\bf {83(2)}} (2004) 189-191.
%
%                      \bibitem{Ae2} A. Edigarian,
%                     \textit{Balanced domains and convexity,}
%                      Arch. Math. (Basel) {\bf {101(4)}} (2013) 373-379.
%
%                        \bibitem{Ae3} A. Edigarian, L. Kosinski and W. Zwonek,
%                          \textit{The Lempert theorem and the tetrablock,}
%                            J. Geom. Anal. {\bf {23(4)}} (2013) 1818-1831.
%
%                              \bibitem{Ae3} A. Edigarian and W. Zwonek,
%                               \textit{Geometry of the symmetrized polydisc,}
%                           Arch. Math. (Basel) {\bf {84(4)}} (2005) 364-374.
%
%                                    \bibitem{Ae4}  A. Edigarian, 
%                                 \textit{A remark on the Lempert theorem}, 
%                              Univ. Iagel. Acta Math. \textbf{32} (1995) 83-88
%
%
%                            \bibitem{Zwonek} A. Edigarian and W. Zwonek, 
%                           \textit{Geometry of symmetrized polydisc,}
%                         Archiv der Mathematik, {\bf{ 84}} (2005), 364 -374.


		\bibitem{Gamelin} T. W. Gamelin,
		\textit{Uniform Algebras,}
		Prentice-Hall, Inc., Englewood Cliffs, N.J., (1969).
		
		\bibitem{Jindal}
		{A. Jindal, P. Kumar},
		\textit{Operator Theory on Pentablock}, Journal of Mathematical Analysis and Application, (2024), 128589.
		
		\bibitem{Katznelson}
		{Y. Katznelson},
		\textit{An Introduction to Harmonic Analysis}, Cambridge University Press, (2004), 9781139165372.
		
		\bibitem{apal1}
		{D. K. Keshari, S. Mandal and A. Pal}
		\textit{Function Theory and necessary conditions for a Schwarz lemma related to $\mu$-Synthesis Domains},
https://doi.org/10.48550/arXiv.2510.24555		
		
		
		\bibitem{apal2}
		{D. K. Keshari,  A. Pal and B. Paul,}
		\textit{Operators on Hilbert Space having $\Gamma_{E(3; 3; 1, 1, 1)}$ and $\Gamma_{E(3; 2; 1, 2)}$ as Spectral Sets}, https://doi.org/10.48550/arXiv.2510.25666
		
		\bibitem{apal3}
		{D. K. Keshari,  A. Pal and B. Paul,}
		\textit{Canonical Decompositions and Conditional Dilations of $\Gamma_{E(3; 3; 1, 1, 1)}$-Contraction and $\Gamma_{E(3; 2; 1, 2)}$-Contraction}, https://doi.org/10.48550/arXiv.2510.26502
		
		\bibitem{cv} G. Misra, A. Pal and C. Varughese,   \textit{Contractivity and complete contractivity for   finite dimensional Banach spaces,} 
             Journal of operator theory, {\bf 1} (2019), 23-47. 

%\bibitem{G} G. Misra,
%            \textit{Completely contractive Hilbert modules and Parrott's example,}
%             Acta Math. Hungar., {\bf 63} (1994),  291 - 303.


\bibitem{GM} G. Misra, N. S. N. Sastry,
             \textit{Contractive modules, extremal problems and curvature inequalities,}
             J. Funct. Anal., {\bf {88}} (1990), 118 - 134.



\bibitem{sastry}  G. Misra, N. S. N. Sastry,
                \textit{Completely contractive modules and associated extremal problems,}
                J. Funct. Anal., {\bf {91} } (1990), 213 - 220.
\bibitem{Mandal}
		{S. Mandal, A. Pal},
		\textit{Necessary Conditions of $\Gamma_n$-Isometry Dilation and the Dilation of a Certain Family of $\Gamma_3$-Contractions}, Complex Analysis and Operator Theory, (2025).		
		
		
		\bibitem{Nagy}
		{B. Sz.-Nagy, C. Foias, H. Bercovici, L. Kerchy},
		\textit{Harmonic Analysis of Operators on Hilbert Space}, Universitext,  Springer, (2010).
		
		\bibitem{A. Pal}
		{A. Pal},
		\textit{On $\Gamma_n$-Contractions and Their Conditional Dilations}, Journal of Mathematical Analysis and Application, {\bf {510}} (2022) 1-36.
		
		\bibitem{Spal 2014}	
			{S. Pal},
\textit{From Stinespring dilation to Sz.-Nagy dilation on the symmetrized bidisc and operator models},
New York J. Math. {\bf{20 }} (2014), 645--664.
		
		\bibitem{pisier} G. Pisier,
               \textit{ Introduction to  Operator Spaces Theory,}
                Cambridge Univ. Press, (2003).

                \bibitem{vern} V. Paulsen,
               \textit{Representations of Function Algebras, Abstract Operator Spaces and Banach Space Geometry,} J. Funct. Anal., {\bf 109} (1992), 113 - 129.

\bibitem{paulsen} V. Paulsen,
                \textit{Completely Bounded Maps and Operator Algebras,}
                Cambridge  Univ. Press, (2002).

		\bibitem{Pal}
		{S. Pal},
		\textit{Canonical Decomposition of Tetrablock Contraction and Operator Model}, Journal of Mathematical Analysis and Application, {\bf {438}} (2016) 274-284.
%		\bibitem{Sourav Pal}
%		{S. Pal},
%		The Failure of rational Dilation on the Tetrablock, Journal of Functional Analysis, 269 (2015) 1903-1924.
%		
%		\bibitem{Pal}
%		{S. Pal},
%		Canonical Decomposition of Tetrablock Contraction and Operator Model, Journal of Mathematical Analysis and Application, 438(2016) 274-284.
%		
%		\bibitem{Sourav}
%		{S. Pal},
%		On Decomposition of Operators having $\Gamma_3$ as a Spectral Set, Operators and Matrices, Volume 11, Number 3(2017), 891-899.
%		
%		\bibitem{S. Pal}
%		{S. Pal},
%		Canonical Decomposition of Operators Associated with the Symmetrized Polydisc, Complex Analysis and Operator Theory, (2018) 12:931-943.
		
		\bibitem{Sarkar} J. Sarkar,
		\textit{Operator Theory on Symmetrized Bidisc,}
		Indiana University Mathematics Journal, \textbf{64} (2015), 847-873.
		
		\bibitem{shalit} O. M. Shalit,
                      \textit{Dilation Theory: A Guided Tour},
                      Operator theory, Functional analysis and applications, (2021), 551-623.

		
		\bibitem{Vasilescu} F. H. Vasilescu,
		\textit{Analytic Functional Calculus and Spectral Decompositions, Mathemati
			and its Applications (East European Series)}, vol. $1$, D. Reidel Publishing Co., Dordrecht;
		Editura Academiei Republicii Socialiste Romania, Bucharest, (1982). Translated from the
		Romanian. 
		
		
		 \bibitem{nyoung}
                 {N. Young},
                 \textit{An Introduction to Hilbert Spaces,}	Cambridge University Press, (1988).		
		
		\bibitem{Zapalowski}
		P. Zapalowski, 
		\textit{Geometric Properties of Domains Related to $\mu$-Synthesis}, Journal of Mathematical Analysis and Applications, {\bf{430}} (2015), 126-143.		






		
%		\bibitem{Ball}
%		{J. A. Ball, H. Sau},
%		\textit{Rational Dilation of Tetrablock Contraction Revisited}, 
%		Journal of Functional Analysis \textbf{278} (2020) 108275.
%		
%		\bibitem{Bharali}
%		G. Bharali, 
%		\textit{A Family of Domains Associated with $\mu$-Synthesis,} 
%		Integr. Equ. Oper. Theory \textbf{82} (2015), 267-285.		
%		
%		\bibitem{Bhatia}
%		{Rajendra Bhatia},
%		\textit{Positive Definite Matrices}, Princeton Series in Applied Mathematics. Princeton University Press, Princeton, NJ (2007).
%		
%		\bibitem{SS} S. Biswas, S. Shyam Roy,
%             \textit{Functional models for $\Gamma_n$-contractions and characterization of $\Gamma_n$-isometries,}
%             J. Func. Anal., {\bf {266}} (2014), 6224 --6255.
%
%		\bibitem{Browder} A. Browder, 
%		\textit{Introduction to Function Algebras,}
%		W.A. Benjamin Inc., New York, (1969).
%		
%		\bibitem{Bhattacharyya}
%		T. Bhattacharyya, 
%		\textit{The Tetrablock as a Spectral Set,}  Indiana University
%		Mathematics Journal, \textbf{ 63 } (2014), 1601--1629.
%		
%		\bibitem{Roy}
%		{T. Bhattacharyya, S. Pal, S. S. Roy},
%		\textit{Dilations of $\Gamma$-Contractions by Solving Operator Equations}, 
%		Advances in Mathematics \textbf{230} (2012) 577-606.
%		
%		\bibitem{costara}C. Costara,
%                 \textit{On the spectral Nevanlinna-Pick problem,}
%                 Studia Math., {\bf  {170}} (2005), 23--55.
%                 
%                 \bibitem{Ccostara} C. Costara,
%                 \textit{The symmetrized bidisc and Lempert's theorem,}
%                  Bull. London Math. Soc. {\bf 36} (2004) 656--662.
		
		
%		\bibitem{michel} M. A. Dritschel,   S. McCullough
%		\textit{The failure of rational dilation on a triply connected domain,}
%		J. Amer. Math. Soc. \textbf{18} (2005), no. 4, 873--918.
%	         
%		\bibitem{Ae1} A. Edigarian,
%                  \textit{A note on Costara paper,}
%                  Ann. Polon. Math. {\bf {83(2)}} (2004) 189-191.
%
%                      \bibitem{Ae2} A. Edigarian,
%                     \textit{Balanced domains and convexity,}
%                      Arch. Math. (Basel) {\bf {101(4)}} (2013) 373-379.
%
%                        \bibitem{Ae3} A. Edigarian, L. Kosinski and W. Zwonek,
%                          \textit{The Lempert theorem and the tetrablock,}
%                            J. Geom. Anal. {\bf {23(4)}} (2013) 1818-1831.
%
%                              \bibitem{Ae3} A. Edigarian and W. Zwonek,
%                               \textit{Geometry of the symmetrized polydisc,}
%                           Arch. Math. (Basel) {\bf {84(4)}} (2005) 364-374.
%
%                                    \bibitem{Ae4}  A. Edigarian, 
%                                 \textit{A remark on the Lempert theorem}, 
%                              Univ. Iagel. Acta Math. \textbf{32} (1995) 83-88
%
%
%                            \bibitem{Zwonek} A. Edigarian and W. Zwonek, 
%                           \textit{Geometry of symmetrized polydisc,}
%                         Archiv der Mathematik, {\bf{ 84}} (2005), 364 -374.
%
%
%		\bibitem{Gamelin} T. W. Gamelin,
%		\textit{Uniform Algebras,}
%		Prentice-Hall, Inc., Englewood Cliffs, N.J., (1969).
%		
%		\bibitem{Jindal}
%		{A. Jindal, P. Kumar},
%		\textit{Operator Theory on Pentablock}, Journal of Mathematical Analysis and Application, (2024), 128589.
%		
%		\bibitem{Katznelson}
%		{Y. Katznelson},
%		\textit{An Introduction to Harmonic Analysis}, Cambridge University Press, (2004), 9781139165372.
		
%		\bibitem{pal1}
%		{D. K. Keshari, S. Mandal and A. Pal}
%		\textit{Function Theory, $H^{\infty}$-Control Theory and necessary conditions for Schwarz lemma related to $\mu$-Synthesis Domain}
%		
%		\bibitem{cv} G. Misra, A. Pal and C. Varughese,   \textit{Contractivity and complete contractivity for   finite dimensional Banach spaces,} 
%             Journal of operator theory, {\bf 1} (2019), 23-47. 
%
%%\bibitem{G} G. Misra,
%%            \textit{Completely contractive Hilbert modules and Parrott's example,}
%%             Acta Math. Hungar., {\bf 63} (1994),  291 - 303.
%
%
%\bibitem{GM} G. Misra, N. S. N. Sastry,
%             \textit{Contractive modules, extremal problems and curvature inequalities,}
%             J. Funct. Anal., {\bf {88}} (1990), 118 - 134.
%
%
%
%\bibitem{sastry}  G. Misra, N. S. N. Sastry,
%                \textit{Completely contractive modules and associated extremal problems,}
%                J. Funct. Anal., {\bf {91} } (1990), 213 - 220.
%		
%\bibitem{Mandal}
%		{S. Mandal, A. Pal},
%		Necessary Conditions of $\Gamma_n$-Isometry Dilation and the Dilation of a Certain Family of $\Gamma_3$-Contractions, Complex Analysis and Operator Theory, (2025).		
%		
%		\bibitem{Nagy}
%		{B. Sz.-Nagy, C. Foias, H. Bercovici, L. Kerchy},
%		\textit{Harmonic Analysis of Operators on Hilbert Space}, Universitext,  Springer, (2010).
%		
%		\bibitem{A. Pal}
%		{A. Pal},
%		\textit{On $\Gamma_n$-Contractions and Their Conditional Dilations}, Journal of Mathematical Analysis and Application, {\bf {510}} (2022) 1-36.
%		
%		\bibitem{pisier} G. Pisier,
%               \textit{ Introduction to  Operator Spaces Theory,}
%                Cambridge Univ. Press, (2003).
%
%                \bibitem{vern} V. Paulsen,
%               \textit{Representations of Function Algebras, Abstract Operator Spaces and Banach Space Geometry, }
%                 J. Funct. Anal., {\bf 109} (1992), 113 - 129.
%
%\bibitem{paulsen} V. Paulsen,
%                \textit{Completely Bounded Maps and Operator Algebras,}
%                Cambridge  Univ. Press, (2002).
%
%		\bibitem{Pal}
%		{S. Pal},
%		\textit{Canonical Decomposition of Tetrablock Contraction and Operator Model}, Journal of Mathematical Analysis and Application, {\bf {438}} (2016) 274-284.
%%		\bibitem{Sourav Pal}
%%		{S. Pal},
%%		The Failure of rational Dilation on the Tetrablock, Journal of Functional Analysis, 269 (2015) 1903-1924.
%%		
%%		\bibitem{Pal}
%%		{S. Pal},
%%		Canonical Decomposition of Tetrablock Contraction and Operator Model, Journal of Mathematical Analysis and Application, 438(2016) 274-284.
%%		
%%		\bibitem{Sourav}
%%		{S. Pal},
%%		On Decomposition of Operators having $\Gamma_3$ as a Spectral Set, Operators and Matrices, Volume 11, Number 3(2017), 891-899.
%%		
%%		\bibitem{S. Pal}
%%		{S. Pal},
%%		Canonical Decomposition of Operators Associated with the Symmetrized Polydisc, Complex Analysis and Operator Theory, (2018) 12:931-943.
%		
%		
%		\bibitem{Vasilescu} F. H. Vasilescu
%		\textit{Analytic Functional Calculus and Spectral Decompositions, Mathemati
%			and its Applications (East European Series)}, vol. $1$, D. Reidel Publishing Co., Dordrecht;
%		Editura Academiei Republicii Socialiste Romania, Bucharest, (1982). Translated from the
%		Romanian. 
		

%%%%%%%%%%%%%%%%%%%%%%%%%%%%%%%		
		
%\bibitem{Abouhajar}
%	A. A. Abouhajar, M. C. White, N. J. Young, A Schwarz Lemma for a Domain Related to $\mu$-Synthesis, The Journal of Geometric Analysis, Volume 17, Number 4, 2007.
%		
%		\bibitem{J. Agler}
%		{J. Agler}, Rational Dilation on an Annulus, Annals of Mathematics, Vol. 121, No. 3, May(1985), pp. 537-563.
%		
%		\bibitem{Agler}
%		{J. Agler, N. Young}, A Model Theory for $\Gamma$-Contraction, Journal of Operator Theory 49(2003), 45-60.
%		
%		\bibitem{N. Young}
%		{J. Agler, N. Young}, A Commutant Lifting Theorem for a Domain in $\mathbb{C}^2$ and Spectral Interpolation, Journal of Functional Analysis, 161 (1999), 452-477.
%		
%		\bibitem{Jim}
%		{J. Agler, N. Young}, Operators Having Symmetrized Bidisc as a Spectral Set, Proceedings of the Edinburgh Mathematical Society, (2000) 43, 195-210.
%		
%		\bibitem{Young}
%		{J. Agler, Zinaida A. Lykova, N. Young}, The complex geometry of a domain related to $\mu$-synthesis, Journal of Mathematical Analysis and Application, 422(2015) 508-543.
%		
%		\bibitem{McCarthy}
%		{J. Agler, J. E. McCarthy, N. J. Young},
%		Operator Analysis: Hilbert Space Methods in Complex Analysis, Second Edition, Cambridge Tracts in Mathematics, Cambridge University Press, 2020.
%		
%		\bibitem{Ando}
%		{T. Ando},
%		On a Pair of Commutative Contractions, Acta Sci. Math. 24 (1963) 88-90.
%		
%		\bibitem{Ball}
%		{J. A. Ball, H. Sau},
%		Rational Dilation of Tetrablock Contraction Revisited, Journal of Functional Analysis, 278 (2020) 108275.
%		
%		\bibitem{H. Sau}
%		{J. A. Ball, H. Sau},
%		Dilation Theory and Functional Models for Tetrablock Contractions, Complex Analysis and Operator Theory, (2023) 17:25,  https://doi.org/10.1007/s11785-022-01282-z.
%		
%		\bibitem{Bharali}
%		G. Bharali, A Family of Domains Associated with $\mu$-Synthesis, Integr. Equ. Oper. Theory 82 (2015), 267-285, DOI 10.1007/s00020-014-2198-x
%		November 22, 2014, Springer Basel 2014.
%\bibitem{Bhatia}
%		{Rajendra Bhatia},
%		Positive Definite Matrices, Princeton Series in Applied Mathematics. Princeton University Press, Princeton, NJ (2007).
%		
%		\bibitem{Bhattacharyya}
%		T. Bhattacharyya, The Tetrablock as a Spectral Set, Indiana University
%		Mathematics Journal, Vol. 63, No. 6 (2014), 1601--1629.
%		
%		\bibitem{Roy}
%		{T. Bhattacharyya, S. Pal, S. S. Roy},
%		Dilations of $\Gamma$-Contractions by Solving Operator Equations, Advances in Mathematics 230(2012) 577-606.
%		
%		\bibitem{T. Bhattacharyya}
%		{T. Bhattacharyya, S. Pal},
%		S Functional Model for Pure $\Gamma$-Contractions, Journal of Operator Theory, 71:2(2014) 327-339.
%		
%		\bibitem{Biswas}
%		{S. Biswas, S. S. Roy},
%		Functional Models of $\Gamma_n$-Contractions and Characterization of $\Gamma_n$-Isometries, Journal of Functional Analysis 266(2014) 6224-6255.
%		
%		\bibitem{Hong}
%		{D. Hong-Ke, P. Jin},
%		Perturbation of Spectrums of $2 \times 2$ Operator Matrices, Proc. Amer. Math. Soc., 121 (1994), 761-766.
		
		
		
%		\bibitem{Katznelson}
%		{Y. Katznelson},
%		An Introduction to Harmonic Analysis, Cambridge University Press, (2004), 9781139165372.
%		
%		
%		
%		\bibitem{Nagy}
%		{B. Sz.-Nagy, C. Foias, H. Bercovici, L. Kerchy},
%		Harmonic Analysis of Operators on Hilbert Space, Universitext,  Springer, 2010.
%		
%		\bibitem{A. Pal}
%		{A. Pal},
%		On $\Gamma_n$-Contractions and Their Conditional Dilations, Journal of Mathematical Analysis and Application, 510(2022) 126016.
%		
%		\bibitem{Sourav Pal}
%		{S. Pal},
%		The Failure of rational Dilation on the Tetrablock, Journal of Functional Analysis, 269 (2015) 1903-1924.
%		
%		\bibitem{Pal}
%		{S. Pal},
%		Canonical Decomposition of Tetrablock Contraction and Operator Model, Journal of Mathematical Analysis and Application, 438(2016) 274-284.
%		
%		\bibitem{Sourav}
%		{S. Pal},
%		On Decomposition of Operators having $\Gamma_3$ as a Spectral Set, Operators and Matrices, Volume 11, Number 3(2017), 891-899.
%		
%		\bibitem{S. Pal}
%		{S. Pal},
%		Canonical Decomposition of Operators Associated with the Symmetrized Polydisc, Complex Analysis and Operator Theory, (2018) 12:931-943.
%		
%		\bibitem{SP}
%		{S. Pal},
%		Operators Associated with the Pentablock and Their Relations with Biball and Symmetrized Bodisc, arXiv:2309.15080v1[math.FA], 26 Sep 2023.
%		
%		\bibitem{Rudin}
%		{W. Rudin},
%		Real and Complex Analysis, Third edition, McGraw-Hill International Editions, Mathematics Series, (1987).
%		
%		\bibitem{Sarkar}
%		{J. Sarkar},
%		Operator Theory on Symmetrized Bidisc, Indiana University Mathematics Journal, 2015, Vol. 64, N0. 3(2015) 847-873.
%		
%		\bibitem{Sau}
%		{H. Sau},
%		A Note on Tetrablock Contractions, New York Journal of Mathematics, 21 (2015) 1347-1369.
%                	
%		\bibitem{Zapalowski}
%		P. Zapalowski, Geometric Properties of Domains Related to $\mu$-Synthesis, Journal of Mathematical Analysis and Applications, 430 (2015), 126-143.
	\end{thebibliography}
\end{document}